\def\writefig#1 #2 #3 {\rlap{\kern #1 truecm
\raise #2 truecm \hbox{\protect{\small #3}}}}
\DeclareMathAlphabet{\mathbbu}{U}{bbold}{m}{n}
\newtheoremstyle{bold}
{.5\baselineskip}{.5\baselineskip}{\itshape}{}{\bfseries}{.}{.5em}{}
\newtheoremstyle{shy}
{.5\baselineskip}{.5\baselineskip}{}{}{\bfseries}{.}{.5em}{}
\def\@captionfont{\small}
\renewcommand{\ge}{\geqslant}
\renewcommand{\geq}{\geqslant}
\renewcommand{\le}{\leqslant}
\renewcommand{\leq}{\leqslant}
\theoremstyle{bold}
\newtheorem{theorem}{Theorem}[section]
\newtheorem{proposition}[theorem]{Proposition}
\newtheorem{lemma}[theorem]{Lemma}
\newtheorem{corollary}[theorem]{Corollary}
\newtheorem{conjecture}[theorem]{Conjecture}
\newtheorem{fact}[theorem]{Fact}
\theoremstyle{shy}
\newtheorem{definition}[theorem]{Definition}
\newtheorem{remark}[theorem]{Remark}
\newtheorem{example}[theorem]{Example}
\newtheorem{observation}[theorem]{\textbf{Observation}}  
\DeclareMathOperator{\tr}{tr}
\newcommand{\cA}{\mathcal{A}}
\newcommand{\cB}{\mathcal{B}}
\newcommand{\cC}{\mathcal{C}}
\newcommand{\cD}{\mathcal{D}}
\newcommand{\cF}{\mathcal{F}}
\newcommand{\cP}{\mathcal{P}}
\newcommand{\cR}{\mathcal{R}}
\newcommand{\cU}{\ts\ts\mathcal{U}}
\newcommand{\cW}{\mathcal{W}}
\newcommand{\FF}{\mathbb{F}}
\newcommand{\NN}{\mathbb{N}}
\newcommand{\ZZ}{\mathbb{Z}\ts}
\newcommand{\RR}{\mathbb{R}}
\newcommand{\CC}{\mathbb{C}}
\newcommand{\SSS}{\mathbb{S}}
\newcommand{\XX}{\mathbb{X}}
\newcommand{\ts}{\hspace{0.5pt}}
\title{Non-crossing partitions}
\author[B.~Baumeister]{Barbara Baumeister}
\author[K-U.~Bux]{Kai-Uwe Bux}
\author[F.~G{\"o}tze]{Friedrich G\"{o}tze}
\author[D.~Kielak]{Dawid Kielak}
\author[H.~Krause]{Henning Krause}
\begin{document}

\begin{abstract}
Non-crossing partitions have been a staple in combinatorics for quite some
  time. More recently, they have surfaced (sometimes unexpectedly) in various
  other contexts from free probability to classifying spaces of braid groups.
  Also, analogues of the non-crossing partition lattice have been introduced.
  Here, the classical non-crossing partitions are associated to Coxeter and
  Artin groups of type $\mathsf{A}_n$, which explains the tight connection
  to the symmetric groups and braid groups. We shall outline those
  developments.
  \end{abstract}

  \maketitle
  
\section{The poset of non-crossing partitions}
  \index{poset!non-crossing partition lattice}
  \index{lattice!non-crossing partition lattice}
  A \emph{partition}\index{partition} $p$ of a set $U$ is a
  decomposition of $U$ into pairwise disjoint subsets
  $B_{i}$:
  \[
    U = \biguplus_{i} B_{i} 
  \]
  The subsets $B_{i}$ are called the \emph{blocks}\index{partition!block}
  of the partition $p$. Another way to look at this is
  to consider $p$ as an equivalence relation on $U$.
  In this perspective, the subsets $B_{i}$ are the
  equivalence classes.
  Let $q$ be another partition of the same set $U$.
  We say that $q$ is a \emph{refinement}\index{partition!refinement} of $p$
  if each block of $q$ is contained in a block of $p$.
  In terms of equivalence relations, if two elements of $U$ are
  $q$-equivalent, they are also $p$-equivalent.
  We also say that $q$ is \emph{finer} than $p$
  or that $p$ is \emph{coarser} than $q$; and we
  write
  \(
    q\preceq p
  \).

  Let $\operatorname{P}(U)$ be the set of all partitions on the
  underlying set $U$.  The refinement relation $\preceq$ is a partial
  order on the set $\operatorname{P}(U)$, which is therefore a
  \emph{poset}\index{poset}.  Moreover, it is a
  \emph{lattice}\index{lattice}, i.e., every non-empty finite subset
  $\cP\subseteq\operatorname{P}(U)$ has a least upper bound and a
  greatest lower bound. We remark that the partition lattice is
  \emph{complete}, i.e., even arbitrary infinite subsets have least
  upper and greatest lower bounds.

  \begin{remark}\label{a4-c3-c13-half-a-lattice-is-a-lattice}
    It is interesting that the definition of a complete lattice can be
    weakened by breaking the symmetry between upper and lower
    bounds. If a poset has upper bounds and greatest lower bounds, it
    is already a complete lattice (i.e. it also has lowest upper
    bounds).
  \end{remark}
  \begin{proof}[Sketch of proof]
    Let $\cP$ be a non-empty subset of the poset. We consider the the
    set \( B^+(\cP) \)
    of all common upper bounds for the non-empty subset $\cP$. Since
    the poset has upper bounds, $B^+(\cP)$ is non-empty. Hence it has
    a greatest lower bound, which turns out to be the lowest upper
    bound of $\cP$.
  \end{proof}
  
  Consider the following reflexive and symmetric relations on $U$:
  \begin{align*}
    x &\sim y
      \quad:\Leftrightarrow\quad
      \exists p\in\cP\,:\,\,
        x \text{\ and\ } y \text{\ are\ }p\text{-equivalent}
      \\
    x &\approx y
      \quad:\Leftrightarrow\quad
      \forall p\in\cP\,:\,\,
        x \text{\ and\ } y \text{\ are\ }p\text{-equivalent}
  \end{align*}
  It is clear that $\approx$ is itself an equivalence relation.
  It corresponds
  to the \emph{meet}\index{lattice!meet} $\bigwedge \cP$
  of the partitions in $\cP$, i.e., the greatest lower
  bound of $\cP$. The transitive closure of $\sim$ is an equivalence
  relation, which corresponds to the \emph{join}\index{lattice!join}
  $\bigvee \cP$ of the partitions in $\cP$.
  
  Now, we restrict our consideration to finite sets.  For a natural
  number $m\in\NN$, let us denote by $[m]$ the set
  $\{\,1,2,3,\ldots,m\,\}$.  We fix the natural cyclic ordering on
  $[m]$ and represent its elements as the vertices
  \( v_{1},\ldots,v_{m} \)
  of a regular $m$-gon inscribed in the unit circle. Let $p$ be a
  partition of $[m]$. We say that two blocks $B$ and $B'$ of the
  partition $p$ \emph{cross} if their convex hulls intersect. The
  partition $p$ is called
  \emph{non-crossing}\index{partition!non-crossing partition} if its
  blocks pairwise do not cross. A non-crossing partition can thus be
  depicted by colouring the convex hulls of its blocks. For blocks of
  size one or two, we fatten up the convex hull.
  \begin{figure}[h]
    \begin{center}
      \raisebox{0pt}{\includegraphics{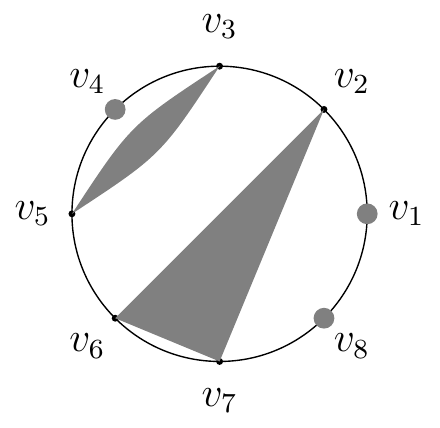}}

      \caption{Visualization of the partition     
      \(
        \{\,
          \{\,1\,\},
          \{\,2,6,7\,\},
          \{\,3,5\,\},
          \{\,4\,\},
          \{\,8\,\}
        \,\}
      \).\label{a4-c3-c13-fig-visualization}}
    \end{center}
  \end{figure}
  It is clear from the visualisation that the complements of the
  coloured regions also are pairwise disjoint. This gives rise to the
  \emph{Kreweras complement}\index{partition!non-crossing
    partition!Kreweras complement}.  Here, we put \emph{dual vertices}
  $w_{1},\ldots,w_{m}$ within the arcs $v_{i}-v_{i+1}$. There is no
  natural numbering, and we choose to place $w_{1}$ within the arc
  from $v_{1}$ to $v_{2}$.  Let $p$ be a non-crossing partition. Two
  dual vertices lie in the same block of the complement $p^\mathrm{c}$
  if they lie within the same complementary region of the convex hulls
  of blocks of $p$.
  \begin{figure}[h]
    \begin{center}
      \raisebox{0pt}{\includegraphics{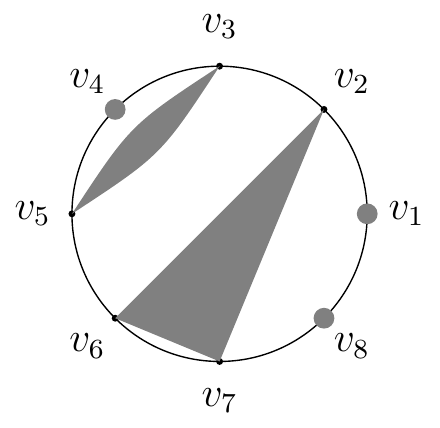}}
      \raisebox{0pt}{\includegraphics{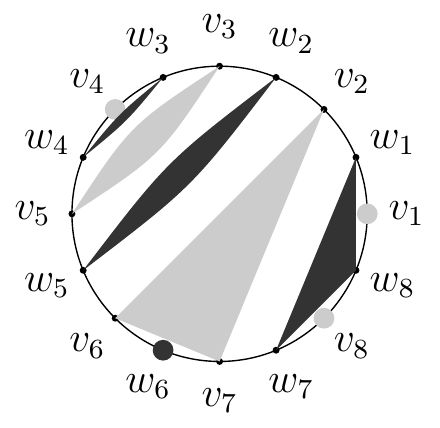}}

      \caption{The partition
      \(
        p =
        \{\,
          \{\,1\,\},
          \{\,2,6,7\,\},
          \{\,3,5\,\},
          \{\,4\,\},
          \{\,8\,\}
        \,\}
      \)
      and its Kreweras complement
      \(
        p^\mathrm{c}
        =
        \{\,
          \{\,1,7,8\,\},
          \{\,2,5\,\},
          \{\,3,4\,\},
          \{\,6\,\}
        \,\}
      \).\label{a4-c3-c13-fig-complement}}
    \end{center}
  \end{figure}    
  The set $\operatorname{NC}(m)$ of all non-crossing partitions of
  $[m]$ is partially ordered with respect
  to refinement. It is thus a subposet of the set of all partitions
  of $[m]$. It turns out that $\operatorname{NC}(m)$
  is also a lattice. This is clear from Remark~\ref{a4-c3-c13-half-a-lattice-is-a-lattice}
  since greatest lower bounds are inherited from the partition lattice
  and upper bounds exist trivially since the trivial partition with a single block
  is noncrossing.

  However, the noncrossing partition lattice is not a sublattice of the whole
  partition lattice: the join operation in both structures differ,
  i.e., the finest partition coarser than some given non-crossing partitions
  does not need to be non-crossing; see Remark~\ref{a4-c3-c13-not-a-sublattice} for a
  counterexample.
  
  The complement map
  \begin{align*}
    \operatorname{NC}(m) & \longrightarrow \operatorname{NC}(m) \\
    p & \longmapsto p^\mathrm{c}
  \end{align*}
  is an anti-automorphism of the lattice $\operatorname{NC}(m)$: it
  reverses the refinement relation and interchanges the roles of meet
  and join.  It is, however, not an involution. In the picture, taking
  the Kreweras complement twice seems to get you back to the original
  partition. This is true; however, the indexing of the vertices
  shifts by one. Thus, the square of the Kreweras complement is given
  by cyclically rotating the element of the underlying set
  $\{\,1,\ldots,m\,\}$.

  The \emph{bottom} (finest) element $\bot$ of $\operatorname{NC}(m)$
  is the partition
  with $m$ blocks, each of size one. The \emph{top} (coarsest) element
  $\top$
  of $\operatorname{NC}(m)$ is the partition with a single block. For each
  non-crossing partition $p$, we define its \emph{rank}
  $\operatorname{rk}(p)$
  in terms of its number of blocks:
  \[
    \operatorname{rk}(p) := m - \# \{\,\text{blocks of\ }p\,\}
  \]
  For any non-crossing partition $p$, all maximal chains from
  the bottom element $\bot$ to $p$ have the same length, which
  coincides with the rank $\operatorname{rk}(p)$.
  Let us summarise the properties and non-properties of the poset of
  non-crossing partitions:
  \begin{fact}
    The set $\operatorname{NC}(m)$ of non-crossing partitions of an
    $m$-element is partially ordered by refinement. This poset is a
    lattice and self-dual with respect to the Kreweras~complement,
    i.e.,
    \begin{align*}
      (p\wedge q)^\mathrm{c}
      &= p^\mathrm{c}\vee q^\mathrm{c}\\
      (p\vee q)^\mathrm{c}
      &= p^\mathrm{c}\wedge q^\mathrm{c}\\
    \end{align*}
    for any two $p,q\in\operatorname{NC}(m)$.

    The automorphism $p\mapsto(p^\mathrm{c})^\mathrm{c}$
    has order $m$.
    
    All maximal chains from bottom to top have length $m-1$.
    For any non-crossing partition $p$, there is a maximal
    chain from bottom to top going through $p$. The non-crossing
    partition lattice is graded and one has
    \[
      m-1 = \operatorname{rk}(p)+\operatorname{rk}(p^\mathrm{c})
    \]
    for any $p$.
  \end{fact}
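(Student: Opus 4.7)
The plan is to reduce each assertion in the fact to two ingredients: (i) the Kreweras complement $c\colon p\mapsto p^{\mathrm{c}}$ is an order-reversing bijection of $\operatorname{NC}(m)$, and (ii) every cover $p\lessdot q$ in $\operatorname{NC}(m)$ is the merger of exactly two blocks of $p$ into a single block of $q$. Ingredient~(i) is essentially asserted in the excerpt; bijectivity in particular follows because $c^2$ acts as the cyclic rotation $\sigma\colon i\mapsto i+1\pmod m$, which is manifestly invertible on $\operatorname{NC}(m)$, so $c$ has the two-sided inverse $c\circ\sigma^{-1}$.

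From~(i) the De Morgan laws are formal: any order-reversing bijection of a lattice exchanges meets with joins, since it sends the least upper bound of $\{p,q\}$ to the greatest lower bound of $\{c(p),c(q)\}$ and dually. For the order of $c^2=\sigma$, I will exhibit, for $m\geq 2$, the partition $\{\{1,2\},\{3\},\ldots,\{m\}\}$, whose unique doubleton block sits at $\{k+1,k+2\}$ after $k$ rotations and hence differs for each $k\in\{0,1,\ldots,m-1\}$; the case $m=1$ is trivial.

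The remaining assertions rest on~(ii). Given $p\prec q$, I pick any two distinct $p$-blocks $B,B'$ lying in the same $q$-block and form $r$ from $p$ by replacing $B,B'$ with $B\cup B'$. Then $p\prec r\preceq q$; crucially $r$ is still non-crossing, because the convex hull of $B\cup B'$ sits inside the convex hull of the containing $q$-block, so any $r$-crossing would already be a $q$-crossing. Hence every cover drops the number of blocks by exactly one, all maximal chains from $\bot$ (with $m$ blocks) to $\top$ (with one block) have length $m-1$, and $\operatorname{rk}(p)$ as defined coincides with the length of any maximal chain from $\bot$ to $p$. A maximal chain through a prescribed $p$ is constructed by splitting $p$ down to $\bot$ one singleton at a time and merging $p$ up to $\top$ by iteratively absorbing cyclically adjacent blocks, both operations preserving non-crossingness. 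Applying $c$ to a maximal chain $\bot=p_0\lessdot\cdots\lessdot p_{m-1}=\top$ through $p=p_k$ produces a reversed maximal chain through $p^{\mathrm{c}}=p_k^{\mathrm{c}}$, in which $p^{\mathrm{c}}$ has rank $m-1-k$; this yields $\operatorname{rk}(p)+\operatorname{rk}(p^{\mathrm{c}})=m-1$.

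The main obstacle is the geometric half of~(ii), namely certifying that the intermediate partition $r$ is non-crossing. This is where the non-crossing hypothesis does real work, since in the full partition lattice the analogous step is vacuous; here one must invoke the nesting of convex hulls built into the planar picture of the inscribed $m$-gon.
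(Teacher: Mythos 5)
The paper states this Fact without proof --- it is explicitly a summary of standard properties, with only the lattice claim justified (via Remark~\ref{a4-c3-c13-half-a-lattice-is-a-lattice}) and the rest asserted --- so there is no argument of record to compare yours against; judged on its own terms, your architecture (order-reversing bijection plus a cover analysis) is the standard one, but it contains one genuine gap. The problem is your ingredient~(ii): you claim that for $p\prec q$ one may merge \emph{any} two $p$-blocks $B,B'$ lying in a common $q$-block and remain non-crossing, ``because the convex hull of $B\cup B'$ sits inside the convex hull of the containing $q$-block, so any $r$-crossing would already be a $q$-crossing.'' That justification only excludes crossings between $B\cup B'$ and $p$-blocks contained in \emph{other} $q$-blocks; it says nothing about the remaining $p$-blocks inside the \emph{same} $q$-block, and those are exactly the ones that can go wrong. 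Concretely, in $\operatorname{NC}(4)$ take $q=\top$ and $p=\{\,\{1,3\},\{2\},\{4\}\,\}$ (the very partition used in Remark~\ref{a4-c3-c13-not-a-sublattice}); merging $B=\{2\}$ with $B'=\{4\}$ produces $\{\,\{1,3\},\{2,4\}\,\}$, which is crossing. So the lemma as stated is false. What is true, and what you actually need, is that \emph{some} pair of $p$-blocks inside a common $q$-block can be merged without creating a crossing --- for instance a block of the restriction $p|_{Q}$ that forms a contiguous arc of $Q$, merged with the block of the element cyclically following that arc. Proving this requires a genuine argument about the nested cyclic structure of $p$ inside a $q$-block, which is precisely the content your one-line justification tries to shortcut. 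Once that lemma is repaired, your remaining deductions (covers drop the block count by exactly one, all maximal chains have length $m-1$, gradedness, the chain through a prescribed $p$, and $\operatorname{rk}(p)+\operatorname{rk}(p^{\mathrm{c}})=m-1$ via applying the anti-automorphism to a maximal chain) do go through, and the De Morgan identities are indeed formal consequences of $c$ being an order-reversing bijection.

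A smaller caveat: your witness for the order of $p\mapsto(p^{\mathrm{c}})^{\mathrm{c}}$ fails at $m=2$. There $\operatorname{NC}(2)=\{\bot,\top\}$ and both elements are invariant under the rotation $1\leftrightarrow 2$, so the automorphism has order $1$, not $2$, and your doubleton $\{k+1,k+2\}$ does not move. This is really a defect of the Fact as stated (the order-$m$ claim needs $m\ge 3$, or $m=1$ trivially), but your assertion that the translates ``differ for each $k\in\{0,1,\ldots,m-1\}$'' for all $m\ge 2$ is not correct as written and should be restricted accordingly.
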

  \begin{remark}\label{a4-c3-c13-not-a-sublattice}
    For $m\geqslant 4$, the non-crossing partition lattice
    $\operatorname{NC}(m)$ is not a sub-lattice of the partition
    lattice: the join operations do not coincide.  A counterexample
    for $m=4$ is \( p=\{\, \{\,1,3\,\},\{\,2\,\},\{\,4\,\} \,\} \)
    and \( q=\{\, \{\,1\,\},\{\,2,4\,\},\{\,3\,\} \,\} \).
    The join of these partitions in the partition lattice is
    $\{\,\{\,1,3\,\},\{\,2,4\,\}\,\}$ whereas the join in
    $\operatorname{NC}(4)$ is the top element.  These two partitions
    also show that the non-crossing partition lattice
    $\operatorname{NC}(m)$ is not \emph{semi-modular}, i.e., the
    following inequality does not hold for all partitions $p$ and $q$,
    \[
      \operatorname{rk}(p)+\operatorname{rk}(q)
      \geqslant
      \operatorname{rk}(p\vee q)
      +
      \operatorname{rk}(p\wedge q)
      .
    \]
  \end{remark}

  Enumerative properties of the noncrosing partitition lattice are
  well understood. Kreweras counted the number of non-crossing
  partitions.
  \begin{fact}[{see~\cite[Cor.~4.2]{a4-c3-c13-Kreweras}}]
    For any $m$, we have
    \[
      \left|\, 
        \operatorname{NC}(m)
      \, \right|
      =
      C_{m}
    \]
    where
    \(
      C_{m}
      = \frac{1}{m+1}
        {\binom {2m} m}
      = \frac{(2m)!}{m!(m+1)!}
    \)
    is the $m^{\text{th}}$ \emph{Catalan number}\index{Catalan number}.
  \end{fact}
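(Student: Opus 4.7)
Set $N_m := |\operatorname{NC}(m)|$ with the convention $N_0 := 1$. My plan is to establish the classical Catalan recursion
\[
  N_m \; = \; \sum_{k=0}^{m-1} N_k \, N_{m-1-k},
\]
which together with the initial value $N_0 = 1$ uniquely determines the Catalan numbers $C_m$.

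To derive the recursion, I would condition on the block $B_1$ containing the element $1$. Let $j := \min(B_1 \setminus \{1\})$ when $|B_1|\geqslant 2$, and $j := m+1$ otherwise. The non-crossing condition forces every other block of $p$ to lie entirely within $\{2, \ldots, j-1\}$ or within $\{j+1, \ldots, m\}$: a block straddling the chord from $v_1$ to $v_j$ would have vertices on both sides of this chord, while $B_1$ itself spans the chord, so their convex hulls would necessarily intersect. Consequently, $p$ decomposes uniquely as a pair consisting of a non-crossing partition on $\{2, \ldots, j-1\}$ (a set of $j-2$ elements) and a non-crossing partition on $\{j, j+1, \ldots, m\}$ (a set of $m-j+1$ elements, in which the block of $j$ records $B_1 \cap \{j, \ldots, m\}$). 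Summing over $j \in \{2, \ldots, m+1\}$ and setting $k := j-2$ yields the recursion above.

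Once the recursion is in place the identification $N_m = C_m$ is standard. One may either argue by induction, using the classical identity $C_m = \sum_{k=0}^{m-1} C_k C_{m-1-k}$; or pass to the generating function $f(x) := \sum_m N_m x^m$, which satisfies $f = 1 + x f^2$, whose unique power series solution is the Catalan generating function $\frac{1-\sqrt{1-4x}}{2x}$.

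The step requiring the most care is the geometric decomposition. Verifying both that the restriction of $p$ to each arc is non-crossing and that any two such non-crossing pieces glue back together via $\{1\}\cup (B_1\cap\{j,\ldots,m\})$ to give a non-crossing partition on $[m]$ is cleanest via the four-point characterisation of non-crossing (no $a<b<c<d$ with $a,c$ in one block and $b,d$ in another), which reduces to a short case analysis according to which of the two arcs each of $a,b,c,d$ falls in.
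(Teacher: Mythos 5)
The paper itself gives no proof of this statement: it is recorded as a Fact with a pointer to Kreweras, so there is nothing internal to compare against line by line. Your argument is correct and self-contained. Conditioning on $j=\min(B_1\setminus\{1\})$ (with $j=m+1$ when $1$ is a singleton) does give a bijection between non-crossing partitions of $[m]$ and pairs consisting of a non-crossing partition of $\{2,\ldots,j-1\}$ and one of $\{j,\ldots,m\}$, summed over $j$; the two verifications you flag both go through via the four-point criterion. In one direction, a block meeting both $\{2,\ldots,j-1\}$ and $\{j+1,\ldots,m\}$ produces the pattern $1<a<j<b$ against $B_1$. In the other direction, the only non-trivial case is a block $D'$ of the second piece against $\{1\}\cup C$ (where $C$ is the block of $j$): since $1$ precedes everything and $j\in C$ is minimal in $\{j,\ldots,m\}$, any crossing of $\{1\}\cup C$ with $D'$ forces a crossing of $C$ with $D'$ already inside the second piece. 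This yields $N_m=\sum_{k=0}^{m-1}N_kN_{m-1-k}$ with $N_0=1$, hence $N_m=C_m$. For comparison, the corollary of Kreweras that the paper cites obtains the total count by summing a finer enumeration of non-crossing partitions according to their block structure (the Narayana-type refinement); your first-return recursion is the more elementary and standard route, at the price of not producing that refined information.
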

  Kreweras also determined the M{\"o}bius function for the lattice of
  non-crossing partitions. Recall that, for a finite poset $P$, the
  \emph{M{\"o}bius function}\index{poset!M{\"o}bius function}
  \[
    \mu :
    \{\,\,
      (u,v)
      \in
      P \times P
    \,\,|\,\,
      u \leq v
    \,\,\}
    \longrightarrow \ZZ
  \]
  is defined by the following recursion:
  \begin{align*}
    \mu(u,u) &= 1, \\
    \mu(u,v)
    & = - \sum_{u\leq w<v}
          \mu(u,w) .
  \end{align*}
  Note that the value $\mu(u,v)$ is
  completely determined by the isomorphism type (as a poset) of the
  interval
  \(
    [u,v]
    :=
    \{\,\, w \in P \,\,|\,\, u\leq w\leq v \,\,\}
  \).
  \begin{fact}[{see~\cite[Thm.~6]{a4-c3-c13-Kreweras} 
      or \cite[Cor.~3.2]{a4-c3-c13-BlassSagan}}]
    For the non-crossing partition poset $\operatorname{NC}(m)$, the
    M{\"o}bius function satisfies
    \begin{equation}\label{a4-c3-c13-moebius}
      \mu(\bot,\top)
      =
      (-1)^{m-1}C_{m-1}
      = (-1)^{m-1} \frac{(2m-2)!}{(m-1)!m!}
    \end{equation}
  \end{fact}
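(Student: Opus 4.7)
The plan is to derive a functional equation for the generating function of the M\"obius values $\nu_m := \mu_{\operatorname{NC}(m)}(\bot,\top)$ and then invert it via the Lagrange inversion formula. The first ingredient is a product decomposition of lower intervals: for $p \in \operatorname{NC}(m)$ with blocks $B_1, \ldots, B_k$ of sizes $b_1, \ldots, b_k$, the interval $[\bot, p]$ is lattice-isomorphic to $\prod_{j=1}^k \operatorname{NC}(b_j)$. This holds because any refinement $q \preceq p$ restricts to a partition of each $B_j$, and the non-crossing condition on $q$ decomposes block-wise, since the convex hulls of the $B_j$ are pairwise disjoint. By multiplicativity of the M\"obius function on direct products, this yields $\mu(\bot, p) = \prod_{j=1}^k \nu_{b_j}$.

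The second ingredient is a generating-function recursion. For arbitrary formal weights $f_j$ ($j \geq 1$), let $h_m := \sum_{p \in \operatorname{NC}(m)} \prod_{B} f_{|B|}$ (the product running over the blocks $B$ of $p$), with $h_0 := 1$, and set $H(z) := \sum_{m \geq 0} h_m z^m$ and $\Phi(w) := \sum_{j \geq 1} f_j w^j$. Each $p \in \operatorname{NC}(m)$ is uniquely determined by the block $\{1 = i_0 < i_1 < \cdots < i_{k-1}\}$ containing $1$, together with independent non-crossing partitions on the $k$ arcs that the chords of this block cut off on the circle. This combinatorial decomposition yields the functional equation
\[
H(z) = 1 + \Phi\bigl(z\,H(z)\bigr).
\]
Specialising $f_j := \nu_j$, the defining M\"obius recursion gives $h_m = \delta_{m,1}$, so $H(z) = 1+z$; writing $F(w) := \Phi(w) = \sum_{m \geq 1} \nu_m w^m$, the equation reduces to $F(z+z^2) = z$, i.e.\ $F$ is the compositional inverse of $w \mapsto w(1+w)$.

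Lagrange inversion then produces
\[
[z^m] F(z) = \frac{1}{m}\, [w^{m-1}] (1+w)^{-m} = \frac{(-1)^{m-1}}{m} \binom{2m-2}{m-1} = (-1)^{m-1} C_{m-1},
\]
which is the claimed formula.

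The main obstacle is justifying the combinatorial bijection that underlies the functional equation: one must verify that, once the block $B$ containing $1$ is fixed, the non-crossing condition on the whole partition $p$ is equivalent to each induced partition on the $k$ arcs cut off by the chords of $B$ being itself non-crossing. This holds because no chord of any other block can cross a chord of $B$, so every other block lies entirely within a single one of these arcs; once that is in hand, the remaining power-series manipulations and the Lagrange inversion are entirely routine.
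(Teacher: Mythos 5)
Your proof is correct. Note, however, that the paper does not prove this statement at all: it is recorded as a \emph{Fact} with citations to Kreweras and to Blass--Sagan, so there is no in-text argument to compare yours against. Your first ingredient --- the poset isomorphism $[\bot,p]\cong\prod_j\operatorname{NC}(B_j)$ and the resulting multiplicativity $\mu(\bot,p)=\prod_j\nu_{|B_j|}$ --- is exactly the content of Observation~\ref{a4-c3-c13-intervall-structure} together with the remark following it, so that part is already implicit in the paper. The rest of your argument (the ``block containing $1$'' decomposition giving $H=1+\Phi(zH)$, the specialisation $f_j=\nu_j$ forcing $h_m=\delta_{m,1}$ via the defining recursion $\sum_{w\le\top}\mu(\bot,w)=0$ for $m\ge 2$, and Lagrange inversion applied to the compositional inverse of $w\mapsto w(1+w)$) is the classical route, essentially Kreweras's own and the one standard in the free-probability literature. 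I checked the details: the arcs cut off by the chords of the first block have sizes summing to $m-k$, the crossing analysis justifying the bijection is sound, $H(z)=1+z$ follows correctly, and the Lagrange computation $[z^m]F=\tfrac1m[w^{m-1}](1+w)^{-m}=\tfrac{(-1)^{m-1}}{m}\binom{2m-2}{m-1}=(-1)^{m-1}C_{m-1}$ matches the claimed formula. What your approach buys beyond the bare citation is a self-contained proof that simultaneously computes the full multiplicative weight enumerator $h_m$, which is precisely the machinery the paper later invokes (via the blockwise Kreweras complement and multiplicativity) to evaluate $\mu(p,q)$ on arbitrary intervals.
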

  Let $p$ be a non-crossing partition, and consider a non-crossing
  partition $q\preceq p$. Let $B$ be a block
  of $p$. The blocks of $q$ contained in $B$
  may be thought of as a non-crossing partition of $B$. Thus, we have
  the following:
  \begin{observation}\label{a4-c3-c13-intervall-structure}
    Let $p\in\operatorname{NC}(m)$ be a non-crossing partition,
    and let \linebreak $B_{1},\ldots,B_{k}$ be its blocks.
    Then the \emph{order ideal}
    \(
      p_{\preceq}
      :=
      \{\,\,
        q\in\operatorname{NC}(m)
      \,\,|\,\,
        q\preceq p
      \,\,\}
    \)
    is isomorphic as a poset to the cartesian product
    \(
      \operatorname{NC}(B_{1})
      \times \cdots \times
      \operatorname{NC}(B_{k})
    \).

    Let $B'_{1},\ldots,B'_{m-k+1}$ be the blocks of the Kreweras
    complement $p^\mathrm{c}$. Since the complement is an
    antiautomorphism of the non-crossing partition lattice, the
    \emph{filter}
    \( p_{\succeq} := \{\,\, q\in\operatorname{NC}(m) \,\,|\,\,
    q\succeq p \,\,\} \)
    is isomorphic as a poset to the cartesian product
    \( \operatorname{NC}(B'_{1}) \times \cdots \times
    \operatorname{NC}(B'_{m-k+1}) \).

    For non-crossing partitions $p\preceq q$, the
    interval $[p,q]$ is the filter
    for $p$ within the order ideal of $q$. Hence,
    by combining the previous isomorphisms, we see that
    \(
      [p,q]
    \)
    is isomorphic to the product
    \(
      \prod_{B} \operatorname{NC}(B) 
    \)
    where $B$ ranges over the blocks of the ``blockwise Kreweras
    complement'' of $p$ in $q$.
  \end{observation}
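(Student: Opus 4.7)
The plan is to prove the three assertions in sequence, deducing parts (2) and (3) from part (1) combined with the self-duality of the non-crossing partition lattice recorded in the preceding Fact. I will work throughout with the convention that a non-crossing partition of a subset $B \subseteq [m]$ refers to the non-crossing property with respect to the cyclic order inherited from $[m]$; a key preliminary observation is that this property depends only on the cyclic order of the elements, not on any particular embedding as a regular polygon. Hence each block $B_{i}$ (resp.\ $B'_{j}$) carries its own intrinsic notion of non-crossing partition.

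For part (1), I would construct the candidate isomorphism $\Phi \colon p_{\preceq} \to \operatorname{NC}(B_{1}) \times \cdots \times \operatorname{NC}(B_{k})$ sending $q$ to the tuple $(q|_{B_{1}}, \ldots, q|_{B_{k}})$ of restrictions, where $q|_{B_{i}}$ consists of the blocks of $q$ contained in $B_{i}$ (this is well-defined precisely because $q \preceq p$). Well-definedness of $\Phi$ as a map into the product reduces to the assertion that each restriction $q|_{B_{i}}$ is itself non-crossing, which follows from the non-crossing property of $q$ on $[m]$ after passing to the cyclic order on $B_{i}$. The inverse map assembles a tuple $(q_{1}, \ldots, q_{k})$ into the common refinement $q := q_{1} \sqcup \cdots \sqcup q_{k}$; this refines $p$ by construction, and is non-crossing because blocks sitting in distinct $B_{i}$'s cannot cross (they lie inside pairwise non-crossing convex hulls of blocks of $p$), while two blocks inside the same $B_{i}$ cannot cross by non-crossingness of $q_{i}$. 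Both $\Phi$ and its inverse preserve refinement, giving the desired isomorphism of posets.

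Part (2) follows by duality. The Kreweras complement restricts to an order-reversing bijection between $p_{\succeq}$ and $(p^{\mathrm{c}})_{\preceq}$. Applying part (1) to $p^{\mathrm{c}}$, whose blocks are $B'_{1}, \ldots, B'_{m-k+1}$, yields an isomorphism $(p^{\mathrm{c}})_{\preceq} \cong \operatorname{NC}(B'_{1}) \times \cdots \times \operatorname{NC}(B'_{m-k+1})$, and dualising using the Kreweras self-duality on each factor gives the claim for the filter $p_{\succeq}$.

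For part (3), the interval $[p,q]$ sits inside $q_{\preceq}$ as the filter generated by $p$. Part (1) identifies $q_{\preceq}$ with $\prod_{C} \operatorname{NC}(C)$ where $C$ ranges over the blocks of $q$; under this identification $p$ corresponds to the tuple of restrictions $(p|_{C})_{C}$, and the filter therefore factors as $\prod_{C} (p|_{C})_{\succeq}$, each factor computed inside $\operatorname{NC}(C)$. Applying part (2) inside each $\operatorname{NC}(C)$ rewrites $(p|_{C})_{\succeq}$ as $\prod_{B} \operatorname{NC}(B)$ over the blocks $B$ of the Kreweras complement of $p|_{C}$ taken inside $\operatorname{NC}(C)$, and aggregating these blocks over all $C$ is precisely what is meant by the blockwise Kreweras complement of $p$ in $q$. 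The main obstacle is the geometric bookkeeping in part (1) — the verification that restriction to blocks and disjoint union across blocks both preserve the non-crossing property; once phrased in terms of cyclic order rather than a fixed planar embedding, this verification is routine, and everything else reduces to formal manipulation with the Kreweras anti-involution already provided by the preceding Fact.
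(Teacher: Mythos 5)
Your proposal is correct and follows essentially the same route as the paper: the paper's justification is exactly the restriction-to-blocks argument for the order ideal (given in the remark preceding the Observation), followed by the Kreweras anti-automorphism for the filter and the combination of the two for the interval. The only addition you make is to spell out the cyclic-order bookkeeping for why restriction and reassembly preserve the non-crossing property, together with the explicit use of self-duality of each factor $\operatorname{NC}(B'_j)$ to turn the anti-isomorphism into an isomorphism --- both of which the paper leaves implicit.
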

  \begin{figure}[h]
    \begin{center}
      \raisebox{0pt}{\includegraphics{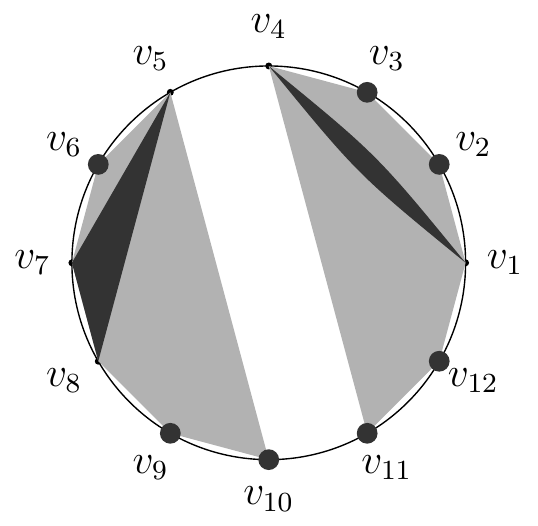}}
      \raisebox{0pt}{\includegraphics{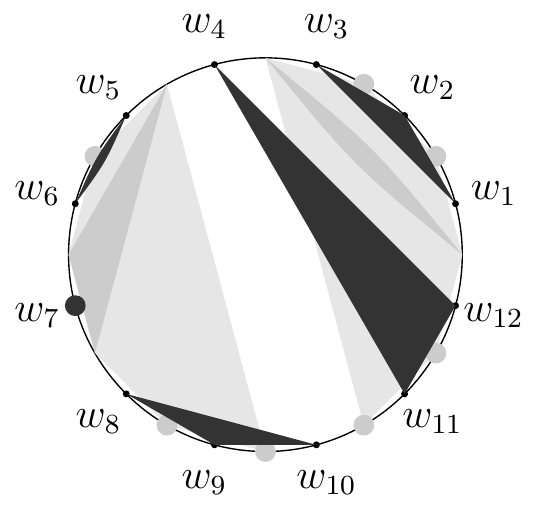}}

      \caption{Two nested partitions $p\preceq q$ and their blockwise
        complement. For the dual vertices $w_{4}$ and $w_{10}$,
        different conventions are possible to determine which dual
        vertex is to be used with which block of
        $q$.\label{a4-c3-c13-fig-blockwise-complement}}
    \end{center}
  \end{figure}
  Since the M{\"o}bius function is multiplicative with respect to
  cartesian products of posets,
  Observation~\ref{a4-c3-c13-intervall-structure} allows one to derive
  the values of $\mu(p,q)$ in terms of the blockwise complement of $p$
  in $q$ from Kreweras' formula~(\ref{a4-c3-c13-moebius}).

  \begin{remark}\label{a4-c3-c13-reversing-order}
    To every poset $(P,\leq)$, one associates the \emph{order complex}
    \index{poset!order complex}.  This is the simplicial complex
    $\Delta(P,\leq)$ whose vertices are the elements of $P$ and whose
    simplices are \emph{chains} in $P$, i.e., non-empty subsets of $P$
    on which $\leq$ is a total order. By a theorem of
    P.~Hall\index{theorem!Hall's theorem}, one can interpret the
    M{\"o}bius function as the Euler characteristic\index{Euler characteristic} 
of order complexes
    \cite[Prop.~3.8.6]{a4-c3-c13-Stanley},
    \[
      \mu(u,v)
      =
      \chi( \Delta( (u,v) ) ),
      \qquad\text{for\ }u<v.
    \]
    Here
    \(
      (u,v)
      :=
      \{\,\, w \in P \,\,|\,\,
        u < w < v
      \,\,\}
    \)
    is the \emph{open interval} from $u$ to $v$.
    
    A significant implication is that the M{\"o}bius function is
    invariant with respect to reversing the order relation: let
    $\mu_{\leq}$ be the M{\"o}bius function of $(P,\leq)$ and let
    $\mu_{\geq}$ be the M{\"o}bius function of the reversed poset
    $(P,\geq)$; then, we have
    \[
      \mu_{\leq}(u,v)
      =
      \mu_{\geq}(v,u).
    \]
  \end{remark}

  {\section{Non-crossing partitions in free probability}} 

  Classical probability spaces $(\Omega, \cF, \mathbf{P})$ can be
  reformulated using the commutative $C^*$-algebra
  $\cA= L^{\infty}(\Omega, \cF, \mathbf{P})$ as follows. Real valued
  (bounded) random variables correspond to elements of $\cA$ and their
  expectations are given by evaluation of the linear functional
  $\varphi(a):= \int_{\Omega} a d \mathbf{P} $. The 'distribution' of
  a random variable $a$ is the induced distribution
  $\mu_a(A):= \mathbf{P}(a^{-1}(A))$ and its $k$th moment is given by
  $\varphi(a^k) =\int_{\Omega} a^k d\mathbf{P}= \int_\RR
  x^k\,\mu_a(dx) = \int_\RR x
  \,\mu_{a^k}(dx)$. 

  This construction admits the following non commutative extension.
  Denote by $(M_d(\CC), \tr) $ the space of $d\times d$ complex
  matrices, together with the normalised trace and the usual matrix
  conjugation. Consider now the algebra of \emph{random matrices}
  $\cA := M_d(L^{\infty}(\Omega, \cF, \mathbf{P}))$ together with the
  linear functional $\varphi(a) := \int_{\Omega}\tr(a)
  d\mathbf{P}$. \index{random element}

  This represents a genuine \emph{non-commutative $C^*$-probability
    space} $(\cA, \varphi )$, which is a unital $C^*$-algebra over
  $\CC$ 
  together with a unital and tracial positive linear functional
  $\varphi : \cA \to \CC$, that is
$$  
\varphi(1) = 1, \qquad \varphi(a^*a)\ge 0, \quad  \varphi(ab) = \varphi(ba), \qquad \text{for all } a,b \in \cA.
$$ 
Furthermore, we shall assume that $\varphi$ is faithful, that is
$\varphi(a^*a)=0$ is equivalent to $a=0$.  See the survey
\cite{a4-c3-c13-Voiculescu:2000}.
 
Many constructions in non commutative probability are parallel to
those in classical probability, and this is also reflected in the
notation: If $a$ is a self-adjoint element in $\cA$, i.e. $a^*=a$, the
value $\varphi(a)$ is sometimes called the \emph{expectation} of $a$,
the values $\varphi(a^k)$, $k \in \NN$, are called the \emph{moments}
of $a$, and the compactly supported probability measure $\mu_a$ on
$\RR$ with $\int x^k \mu_a(dx) = \varphi(a^k)$, $k \in \NN$, is also
called the \emph{distribution} of $a$ which always exists for
self-adjoint elements in a $C^*$-probability space.  If the measure
$\mu_a$ admits a density $f_a$, the latter is also called the
\emph{density} of $a$.  Similarly, given two self-adjoint elements $a$
and $b$ in $\cA$, the \emph{joint moments} of $a$ and $b$ are given by
the values $\varphi(w)$, $w$ being a ``word'' in $a$ and $b$.
\\
Recall that a compactly supported Borel measure $\mu$ on $\RR$ (and
more generally any $\mu$ with $\int e^{zx}\mu(dx)$ locally analytic
around $z=0$) is \emph{uniquely} characterised by its moments
$\int x^k \mu(dx)$ since then the Fourier transform of $\mu$ is a
convergent power series with coefficients given by the moment
sequence. \index{distribution}

In order to define a corresponding notion of independence for
self-adjoint elements (like that for random variables in classical
probability theory), recall that two random variables
$a,b\in L^{\infty}(\Omega, \cF, \mathbf{P})$ endowed with expectation
$\varphi$ as above are \emph{independent}, if
$\varphi(a^k b^l) = \varphi(a^k) \varphi(b^l)$ or equivalently
\begin{equation} 
\varphi\Big((a^k-\varphi(a^k))(b^l-\varphi(b^l))\Big)=0
\label{a4-c3-c13-eq:indep}
\end{equation} 
for all $k,l\in \mathbf{N}_{0}$.

Let 
$\cA_1$ and $\cA_2$ denote unital sub-algebras in $\cA$, for instance
generated by elements $a$ and $b$ respectively. They are called `free'
if the expectations of all products with factors alternating between
elements from $\cA_1$ and $\cA_2$ vanish whenever the expectations of
all factors vanish. \index{freeness} Hence the elements $a,b \in \cA$
are called \emph{free} if
\begin{multline} \label{a4-c3-c13-eq:free}
\varphi\big(
(a^{j_1}-\varphi(a^{j_1}))
(b^{k_1}-\varphi(b^{k_1}))\cdots
(a^{j_m}-\varphi(a^{j_m}))
(b^{k_m}-\varphi(b^{k_m})) \big) = 0 
\end{multline}
for all $m \in \NN$ and all $j_1,\hdots,j_m,k_1,\hdots,k_m \in
\NN$.
Hence for $m=1$ this rule for the evaluation of joint moments
coincides with the classical rule $\varphi(ab)=\varphi(a)\varphi(b)$
but is apparently different for $m>1$.  The rules
\eqref{a4-c3-c13-eq:free} as well as \eqref{a4-c3-c13-eq:indep} allow
to reduce by induction the evaluation of joint moments
$\varphi(a^{j_1} b^{k_1} \cdots a^{j_m} b^{k_m})$ of these free or
independent elements to the moments $\varphi(a^j)$ and $\varphi(b^k)$,
which determine the marginal distribution of $a$ resp. $b$.  Thus
freeness may be regarded as a (non-commutative) analogue of the notion
of independence in classical probability theory, allowing the
development of a \emph{free probability theory}\index{free probability}\index{independence}.  
In particular
\eqref{a4-c3-c13-eq:free} allows to to compute the expectation of
$\varphi((a+b)^n)$ for any $n\in \NN$, $a\in \cA_1$ and $b \in \cA_2$,
thus determining the distribution in the sense described above of the
`free' sum of $a$ and $b$ via the moments of $a$ and $b$ only. Hence,
this assigns to compactly supported measures $\mu, \nu$ (with moments
given by those of $a,b$) a \emph{free additive convolution}
$\mu\boxplus \nu$, see the survey
\cite{a4-c3-c13-Voiculescu:2000}. This notion may be considered as an
asymptotic limit of a corresponding notion for sequences of random
matrices with independent entries of increasing dimension and their
limiting spectral measures, 
 \cite[Chapter 1]{sfb701}.

More generally, a set of unital sub-algebras
$\cA_j \subset \cA, \,j\in I $, indexed by a set $I$, is called free
if for any integer $k$ and
$a_j \in \cA_{i_j}, j=1, \ldots, k, i_j \in I$,
\begin{align}
  \varphi(a_1\ldots a_k)&=0 \quad \text{provided that}\quad 	\varphi(a_j) =0, \quad j=1, \ldots , k, \notag \\
                        &\quad \text{and} \quad  i_1 \ne i_2, i_2 \ne i_3,\dots,  i_{k-1} \ne i_k,\label{a4-c3-c13-eq:freek}
	\end{align}
        that is, all adjacent elements in $a_1 \ldots a_k$ belong to
        different sub-algebras $A_{j_i}$.  This notion has similar
        properties as classical independence. For instance,
        polynomials $P(a_j)$ of free self-adjoint elements $a_j$
        (generating a sub-algebra) are free again.

        The density $\psi(x)=\frac 1{\sqrt{2\pi}} \exp(-x^2/2)$
        defines the standard Gaussian distribution.  Hence, the
        \emph{classical central limit theorem (CLT)} may be stated for
        independent random elements $a_i, i \in \NN$ from a
        commutative $C^*$-probability space $(\cA, \varphi)$ with
        \emph{identical} distribution such that
        $\varphi(a_i)=0 ,\, \varphi(a_i^2)=1$ (such variables are
        called \emph{standardised}).
        \begin{theorem}[Commutative $C^*$-version of
          CLT]\index{theorem!classical central limit theorem}
	
          The moments of the normalised sum
          $S_N:=\frac{a_1+\ldots +a_N} {\sqrt{N}}$ satisfy
\begin{equation}
\lim_{N \to \infty} \varphi(S_N^k) = \int x^k \psi(x) \, dx, \quad k \in \NN.
\end{equation} 
\end{theorem}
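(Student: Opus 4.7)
The plan is to use the method of moments: expand $\varphi(S_N^k)$ combinatorially, group the terms by the equality pattern of the summation indices, and identify which contributions survive the normalisation $N^{-k/2}$ as $N\to\infty$.

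First, by multilinearity of $\varphi$ and commutativity of $\cA$,
\[
  \varphi(S_N^k)
  \;=\; \frac{1}{N^{k/2}} \sum_{i_1,\ldots,i_k=1}^{N}
  \varphi\bigl(a_{i_1}a_{i_2}\cdots a_{i_k}\bigr).
\]
Every tuple $(i_1,\ldots,i_k)$ determines a set partition $\pi$ of $[k]$ by declaring $s \sim t$ if and only if $i_s = i_t$. Collecting tuples by their partition type and invoking classical independence together with identical distribution, the inner expectation factors as $\prod_{B\in\pi}\varphi(a^{|B|})$, where $a$ denotes the common distribution of the $a_i$. Since the number of tuples realising a given $\pi$ with $r(\pi)$ blocks is the falling factorial $N(N-1)\cdots(N-r(\pi)+1)$, I arrive at
\[
  \varphi(S_N^k)
  \;=\; \sum_{\pi} \frac{N(N-1)\cdots(N-r(\pi)+1)}{N^{k/2}}
  \prod_{B\in\pi}\varphi(a^{|B|}),
\]
the sum being over all set partitions $\pi$ of $[k]$.

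Next I would analyse which $\pi$ survive the limit. The standardisation $\varphi(a)=0$ kills every $\pi$ containing a singleton block. The scaling ratio $N(N-1)\cdots(N-r+1)/N^{k/2}$ tends to $0$ if $r<k/2$ and to $1$ if $r=k/2$; the case $r>k/2$ would diverge, but then $\sum_B |B|=k<2r$ forces a singleton block, so the contribution vanishes anyway. For $r=k/2$, the identity $\sum_B|B|=k=2r$ combined with $|B|\ge 2$ forces every block to have size exactly two; this is possible only for $k$ even, so odd moments vanish in the limit, as required for the Gaussian.

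Finally, for $k=2m$ the surviving terms are indexed by pair partitions of $[2m]$, each contributing $\varphi(a^2)^m=1$, and there are $(2m-1)!! = (2m)!/(2^m m!)$ such partitions, which is exactly $\int_{\RR} x^{2m}\psi(x)\,dx$. Finiteness of every moment $\varphi(a^j)$ is automatic because elements of a $C^*$-probability space are bounded, so the combinatorial reshuffling is unproblematic. The main (and essentially the only) non-trivial point is the scaling argument that isolates pair partitions among all set partitions of $[k]$; it is instructive to compare this with the free setting, where the same bookkeeping will instead single out the non-crossing pair partitions counted by the Catalan numbers $C_m$ introduced earlier.
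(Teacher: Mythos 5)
Your proposal is correct and follows essentially the same route the paper takes: the partition decomposition of $\varphi(S_N^k)$ in \eqref{a4-c3-c13-eq:moment}, the vanishing of singleton blocks by standardisation, the negligibility of blocks of size at least three under the $N^{-k/2}$ scaling, and the survival of exactly the pair partitions are all set out in the paragraphs following the theorem (the paper notes explicitly that this bookkeeping applies to independent as well as free elements). The only difference is one of emphasis, since in the commutative case every pair partition contributes $1$, giving $(2m-1)!!$, whereas the paper's main interest is the free case where only the non-crossing pair partitions survive.
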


Consider \emph{free} random elements $a_i$ from a (non-commutative)
$C^*$-probability space $(\cA, \varphi)$ , standardized via
$\varphi(a_i)=0 , \varphi(a_i^2)=1$ with \emph{identical distribution},
that is $\varphi(a_j^l)$ depends on $l$ only.  In order to describe a
corresponding \emph{free} `central limit theorem' for this setup we
have to determine the asymptotic behaviour of moments of type
$\varphi(a_{i_1}\ldots a_{i_k})$ subject to the assumption of freeness
\eqref{a4-c3-c13-eq:freek}.
 
Note that by freeness 
all mixed moments vanish provided an element $a_j$ occurs only once in
the product vanish.  (Note that this holds as well for mixed moments
of independent random variables). Thus, we only need to consider mixed
moments with factors occurring at least twice.  For a product
$a_{i_1}\cdots a_{i_k}$ of $k$ factors, such that $s$ of them, say
$b_1, \ldots, b_s$, are different, let ${ p}=\{B_1,\ldots,B_s\}$
denote the corresponding partition of the set $\{1,\ldots, k\}$ into
$|{ p}|:=s$ nonempty blocks $B_j$ of the positions of $b_j$ in
$1\le j \le k$.
  
One can show by induction that all mixed moments of free or
independent elements $\varphi(a_{i_1}a_{i_2} \cdots a_{i_k})$ where
$1\le i_j \le N$, can be computed via \eqref{a4-c3-c13-eq:freek} resp.
\eqref{a4-c3-c13-eq:indep} as above also for $s \ge 2$ in terms of
moments $c_l=\varphi(b_j^l)$ for $j=1,\ldots, s$ which depend on $l$
only by the assumption of identical distribution. Thus these mixed
moments depend \emph{on the partition scheme of} $i_1, \dots, i_k$, say
${{ p}}$, only and will be denoted by $m_{{ p}}$.  The number of such
mixed moments in $a_1, \ldots, a_N$ corresponding to a given partition
scheme depends on $|{ p}|$ only and is given by
$A_{N,{ p}}=N(N-1)\cdots (N-|{ p}|+1)$ . Thus
\begin{equation}\label{a4-c3-c13-eq:moment}
\varphi(S_N^k) = \sum_{{ p}} 
 m_{{ p}} A_{N,{ p}}N^{-k/2}.
 \end{equation}
 For a partition ${ p}$ we have $A_{N,{ p}} < N^{|{ p}|}$.  If all
 parts of ${ p}$ satisfy $|B_j|\ge 2 $ and one block is of size at
 least three, the corresponding contribution in
 \eqref{a4-c3-c13-eq:moment} is of order
 $|m_{{ p}}| A_{N,{ p}}N^{-k/2}\le |m_{{ p}}| N^{-1/2}$, that is all
 these terms are asymptotically negligible as $N$ tends to infinity.
  
 Hence, computing the asymptotic limit of $\varphi(S_N^k)$ reduces to
 considering all mixed moments of $k$ factors with each random element
 occurring precisely \emph{twice}, a consequence being that
 $\lim_{N\to\infty}\varphi(S_N^k) = 0$ for $k$ odd.

 Recall that $ \operatorname{NC}(n)$ denoted the lattice of all
 non-crossing partitions on the set $[n]=\{
 1,\ldots,n\}$.
 Furthermore, let $ \operatorname{NC}_{2}(2k)$ denote the subset of
 non-crossing partitions with blocks of size 2 only, called
 'non-crossing pair partitions' on a set of $2k$ elements.

 Now consider as an example three free standardised variables
 $a,b, c$.  Then the product $abc^2ab$ corresponds to a pair partition
 with a \emph{crossing}, that is ${
   p}=\{\{1,5\},\{3,4\},\{2,6\}\}$. Hence
 $\varphi(abc^2ab)=\varphi(abab) \varphi(c^2)= 0$ by freeness, that is
 \eqref{a4-c3-c13-eq:free}. Otherwise for a non-crossing pair
 partition like $ca^2b^2c$ we have
 $\varphi(ca^2b^2c)=\varphi(cb^2c)\varphi(a^2)=\varphi(cc)\varphi(b^2)=1$. These
 simple observations can be generalised by induction in the following
 Lemma to determine the values of joint moments
 $m_{p}= \varphi(a_{i_1}a_{i_2} \cdots a_{i_k})$ for pair partitions
 $p$ of free
 variables. 
  \begin{lemma}
    For \emph{any} pair partition ${ p}$,
    $$m_{p}=\begin{cases}0 &   \text{if $p$ has a crossing }\\
    1  & \text{if $p$ is non-crossing.} \end{cases}$$
  \end{lemma}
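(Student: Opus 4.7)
The plan is to induct on the number $k$ of pairs in $p$. The base case $k=0$ is the empty product with $\varphi(1)=1$, consistent with the (empty, hence non-crossing) partition. For the inductive step I would split cases according to whether $p$ contains an \emph{adjacent pair}, i.e.\ a block of the form $\{\ell,\ell+1\}$.

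If $p$ does contain such an adjacent pair, set $j := i_\ell = i_{\ell+1}$, so that the word $a_{i_1}\cdots a_{i_{2k}}$ contains the subword $a_j^2$ at positions $\ell,\ell+1$. With $b := a_{i_1}\cdots a_{i_{\ell-1}}$ and $c := a_{i_{\ell+2}}\cdots a_{i_{2k}}$, both $b$ and $c$ lie in the subalgebra $\cB$ generated by $\{a_i : i\neq j\}$, which is free from $\cA_j := \operatorname{alg}(a_j)$ (a standard reduction of family freeness to pairwise freeness of two combined subalgebras, via~\eqref{a4-c3-c13-eq:freek}). I would then establish the contraction identity
\[
  \varphi(b\,a_j^2\,c) \;=\; \varphi(bc).
\]
The mechanism is to write $a_j^2 = 1 + (a_j^2 - 1)$ with $\varphi(a_j^2 - 1) = 0$, and to verify that the residual term $\varphi\bigl(b(a_j^2-1)c\bigr)$ vanishes: after centering $b$ and $c$ as $b = \varphi(b) + (b-\varphi(b))$ and $c = \varphi(c) + (c-\varphi(c))$ and expanding, every summand either carries the explicit scalar factor $\varphi(a_j^2-1)=0$ or is the fully-centered triple $\varphi\bigl((b-\varphi(b))(a_j^2-1)(c-\varphi(c))\bigr)$, which fits the alternating pattern $\cB,\cA_j,\cB$ and vanishes by~\eqref{a4-c3-c13-eq:freek}. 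The identity reduces $m_p$ to $m_{p'}$, where $p'$ is the pair partition of $[2k-2]$ obtained by deleting $\{\ell,\ell+1\}$ and relabelling. Since no crossing of $p$ can involve the block $\{\ell,\ell+1\}$ (no integer lies strictly between $\ell$ and $\ell+1$), $p$ is crossing precisely when $p'$ is, and the induction closes.

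If $p$ has no adjacent pair, a short combinatorial argument (tracking the block $\{1,r\}$ containing position $1$, which forces the interior positions to be even in number and paired among themselves, then iterating) shows that every non-crossing pair partition with $k\geq 1$ must contain an adjacent pair. Hence $p$ is necessarily crossing, and the target value is $m_p = 0$. But the absence of an adjacent pair says exactly that $i_r \neq i_{r+1}$ for all $r$, and each $a_{i_r}$ is centered by standardisation; the freeness condition~\eqref{a4-c3-c13-eq:freek} therefore applies verbatim and gives $m_p = \varphi(a_{i_1}\cdots a_{i_{2k}}) = 0$.

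The main obstacle is the contraction identity in Case~1. Morally it merely pulls out $\varphi(a_j^2)=1$ from a free factor, but a direct appeal to freeness is not available, because $b$ and $c$ are not centered and are themselves products spanning several of the subalgebras $\cA_i$. The careful bookkeeping of centering, ensuring that the one surviving summand matches the alternating centered pattern required by~\eqref{a4-c3-c13-eq:freek}, is where the technical work lies; once this identity is in hand the rest of the argument is a straightforward observation about which pair partitions remain crossing after an adjacent-block deletion.
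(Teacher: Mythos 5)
Your proof is correct and follows essentially the same route the paper sketches: the paper's two worked examples, $\varphi(abc^2ab)=\varphi(abab)\varphi(c^2)=0$ and $\varphi(ca^2b^2c)=1$, are precisely your two mechanisms --- contraction of an adjacent pair $a_j^2$ via freeness, and direct vanishing of an alternating centred word via \eqref{a4-c3-c13-eq:freek} --- and the paper explicitly leaves the general induction to the reader. Your write-up supplies the details the paper omits (the centering argument for the contraction identity, the preservation of the crossing property under deletion of an adjacent block, and the fact that a nonempty non-crossing pair partition always contains an adjacent pair), so it is a faithful completion of the intended argument.
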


  Thus, we conclude from \eqref{a4-c3-c13-eq:moment} and the previous
  results that
  $$ 
\lim_{N\to \infty}\varphi(S_N^{2k})=\lim_{N\to \infty} \sum_{{ p} \in  \operatorname{NC}_{2}(2k)} \frac{A_{N, { p}}}{N^{k/2}}= | \operatorname{NC}_{2}(2k)|.
$$
  Furthermore, one shows that
   \begin{equation}
     C_k:=| \operatorname{NC}_{2}(2k)|=| \operatorname{NC}(k)|,		
   \end{equation}
   where $C_k=\frac{1}{k+1} \binom{2k}{k}$ is the $k$th Catalan
   number. Among its numerous interpretations, it represents as well
   the $2k$\,th moment of a compactly supported measure with density
   $w(x):=\frac 1{2 \pi} (4-x^2)^{1/2}, |x| \le 2$.  This is the
   so-called \emph{Wigner} measure or semi-circular distribution. See
   \cite[Rem.~9.5]{a4-c3-c13-Nica-Speicher:2006}. \index{Wigner measure}

   Now the \emph{free central limit theorem} for a sequence of free
   variables $a_j, j \in \NN$, which are standardised via
   $\varphi(a_j)=0, \, \varphi(a_j^2)=1$, and
   $S_N:=\frac{a_1+\ldots +a_N} {\sqrt{N}}$ may be stated as follows.
   \begin{theorem}[Free Central Limit Theorem]\index{theorem!free central limit theorem}
     $S_N$ converges in distribution to $w$ which serves as the Gaussian
     distribution in free probability, i.e.
     \begin{equation}\label{a4-c3-c13-eq:momentfree}
       \lim_{N \to \infty} \varphi(S_N^k) = \int x^k w(x) dx, \quad k \in \NN.
     \end{equation}
   \end{theorem}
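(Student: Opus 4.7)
The plan is to combine everything assembled in the preceding paragraphs: the moment decomposition \eqref{a4-c3-c13-eq:moment}, the partition-size estimates, the pair-partition Lemma, and a moment-problem argument. Concretely, I would proceed as follows.

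First I would expand $\varphi(S_N^k)$ via multilinearity into
\[
\varphi(S_N^k) = N^{-k/2}\sum_{i_1,\ldots,i_k=1}^{N}\varphi(a_{i_1}\cdots a_{i_k})
= \sum_{p} m_p\, A_{N,p}\, N^{-k/2},
\]
grouping tuples $(i_1,\ldots,i_k)$ according to the partition $p$ of $[k]$ they induce, exactly as in \eqref{a4-c3-c13-eq:moment}. Freeness together with $\varphi(a_j)=0$ forces $m_p=0$ as soon as $p$ has a singleton block, because the corresponding mixed moment then contains an isolated centred factor and vanishes by an inductive application of \eqref{a4-c3-c13-eq:free}. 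So only partitions with $|B_j|\geq 2$ for every block contribute.

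Next I would isolate the dominant order in $N$. Since $A_{N,p}\leq N^{|p|}$ and $|p|\leq k/2$ with equality only for pair partitions, the estimate already sketched in the text gives $m_p A_{N,p}N^{-k/2} = O(N^{-1/2})$ whenever some block has size $\geq 3$. For $k$ odd, no pair partition of $[k]$ exists, so the entire sum is $O(N^{-1/2})\to 0$, matching $\int x^k w(x)\,dx = 0$. For $k=2\ell$ even, the only surviving contributions come from $p\in \operatorname{NC}_2(2\ell)\cup\{\text{crossing pair partitions}\}$, and the Lemma immediately preceding the theorem tells us $m_p=0$ on the crossing ones and $m_p=1$ on the non-crossing ones. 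Since $A_{N,p}/N^{\ell}\to 1$ for any pair partition, we conclude
\[
\lim_{N\to\infty}\varphi(S_N^{2\ell}) = |\operatorname{NC}_2(2\ell)| = C_\ell,
\]
using the identity $|\operatorname{NC}_2(2\ell)| = |\operatorname{NC}(\ell)| = C_\ell$ recalled in \eqref{a4-c3-c13-eq:momentfree}'s preamble (a standard bijection sending a non-crossing pair partition to the non-crossing partition formed by its ``inner'' arcs).

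Finally I would identify the limit with the semicircle. The stated fact that $C_\ell = \int x^{2\ell} w(x)\,dx$ (with odd moments zero by symmetry of $w$) means $\lim_N \varphi(S_N^k) = \int x^k w(x)\,dx$ for every $k\in\NN$, which is the moment identity \eqref{a4-c3-c13-eq:momentfree}. To upgrade this to convergence in distribution, I would invoke the fact emphasised earlier in the section: compactly supported measures on $\RR$ are uniquely determined by their moments, and moment convergence to the moments of such a determinate measure implies weak convergence; since $w$ is compactly supported on $[-2,2]$, this applies and completes the proof. The main obstacle is really the combinatorial Lemma on pair partitions, which is already taken as given; modulo that, the argument is an essentially bookkeeping reduction of the free CLT to counting $\operatorname{NC}_2(2\ell)$.
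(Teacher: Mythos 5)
Your proposal is correct and follows essentially the same route as the paper, which proves the theorem through exactly this chain: the moment expansion \eqref{a4-c3-c13-eq:moment}, the vanishing of contributions from partitions with singleton blocks or blocks of size at least three, the pair-partition Lemma, and the identification $|\operatorname{NC}_2(2\ell)|=C_\ell$ with the even moments of the Wigner density. The only addition worth noting is that you make the final passage from moment convergence to convergence in distribution explicit via moment determinacy of compactly supported measures, a point the paper only records earlier in the section rather than in the derivation itself.
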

   This means e.g. that the rescaled sum $(a_1+a_2)/\sqrt2$ of two
   free elements $a_1,a_2$ of a non-commutative probability space
   $(\cA, \varphi)$ which both have density $w(x)$ again has a Wigner
   distribution.  In free probability an element $s$ of
   $(\cA, \varphi) $ with density $w(x)$ is called \emph{semi-circular}
   and its moments are given by
   \begin{equation}\label{a4-c3-c13-eq:smoment}
     \varphi(s^n) = \begin{cases} \frac{1}{k+1} \binom{2k}{k}, &\mbox{if}\quad n=2k,\\
       0, & \mbox{if} \quad n  \text{ odd}. \end{cases}
   \end{equation}
   
   Recall that $a\in (\cA, \varphi)$ is called \emph{positive} if there
   exists an $c \in (\cA, \varphi)$ with $a=c^* c$ . Thus $a$ is
   self-adjoint.  Define the \emph{free multiplicative convolution} of
   two compactly supported measures $\mu_a,\mu_b$, of \emph{positive}
   free elements $a, b \in (\cA, \varphi)$, say
   $\mu_a\boxtimes \mu_b$, as follows by specifying its moments.
   \index{free multiplicative convolution}
   Since in a $C^*$-probability space $\cA$ positive square roots
   $ a^{1/2}$ resp. $b^{1/2}$ of $a$ resp. $b$ as well as the positive
   element $p_{a,b}:= a^{1/2}ba^{1/2}$ are again in $\cA$, we may
   define $\mu_a\boxtimes \mu_b$ by:
   \begin{equation}
   	  \int x^k d \mu_a\boxtimes \mu_b (x) :=  \varphi(p_{a,b}^k), \qquad k \in \NN.
   	\end{equation}
  Since $\varphi(p_{a,b}^k)= \varphi(p_{b,a}^k), \, k \in \NN$, 
  because $\varphi$ is tracial, i.e. $\varphi(ba)=\varphi(ab)$, we
  conclude that the free convolution $\boxtimes$ is commutative.  By
  the same tracial property and the relation of freeness, we show that
  $\varphi(p_{a,b}^k)= \varphi((ab)^k)$ and this implies the
  associativity of $\boxtimes$.  Moreover it follows from this
  representation 
  that the multiplicative convolution measure $\mu_a\boxtimes \mu_b$
  is uniquely determined by the distributions of $\mu_a$ and
  $\mu_b$. \index{free additive convolution}
     
  In order to effectively compute both additive and multiplicative
  convolution of measures, one needs more properties of the lattice of
  partitions of $1, \ldots, n$ into blocks and the subset of
  non-crossing partitions together with the notion of multi-linear
  cumulant functionals.  As above let $B_j, j=1,\ldots s$ denote the
  blocks of a partition ${ p}\in \operatorname{NC}(n)$ of
  $1, \ldots, n$.

  For ${p}\in\operatorname{NC}(n)$, the \emph{free mixed cumulants} are
  multi-linear functionals $\kappa_{{p}}: \cA^n \to
  \CC$ 
  defined in terms of a moment decomposition using the M{\"o}bius
  function $ \mu({ q}, { p})$
  of the lattice of non-crossing partitions  $ \operatorname{NC}(n)$.
  We define the general mixed cumulant functionals $\kappa_{{ p}}$ as follows:
    \begin{align} \label{a4-c3-c13-eq:cumdef}
      \kappa_{{ p}}[a_1, \ldots ,a_n] &
      =
      \sum_{ q \in  \operatorname{NC}(n), { p}\preceq{ q}}
        \varphi_{{  q}}[a_1, \ldots, a_n] \,
         \mu({ p}, { q}), \quad \text{where} \\
      \varphi_{ q}[a_1, \ldots, a_n] &
      :=
      \varphi\left(\prod_{k\in B_{1}} a_k\right)
      \cdots  \varphi\left(\prod_{k\in B_{s}} a_k\right),\notag
    \end{align}
    and the products $\prod_{k\in B_{j}} a_k$ repeat the order of
    indices within the block $B_j$.  Note that by Hall's theorem, the
    coefficient $ \mu( p, q)$ can also be written as
    $ \mu_{\succeq}( q, p)$ using the relation of reversed refinement
    (see~Remark~\ref{a4-c3-c13-reversing-order}).
  
    Then one shows, see
    \cite[Prop.~11.4]{a4-c3-c13-Nica-Speicher:2006}, that
\begin{equation}\label{a4-c3-c13-eq:momcum}
\varphi(a_1 \cdots a_n) = \sum_{{ p}\in  \operatorname{NC}(n)} \kappa_{{ p}}[a_1,\ldots, a_n].
\end{equation}

In the special case $p={1_n}$ we write $\kappa_n$ instead of
$\kappa_{1_n}$. The following lemma is proved by induction on $n$.
\begin{lemma}[{\cite[Thm~11.20]{a4-c3-c13-Nica-Speicher:2006}}]\label{a4-c3-c13-freechar}
  The elements $a_1, \ldots, a_n \in \cA$ are free if and only if all
  mixed cumulants satisfy
$$ \kappa_n[a_{j_1}, \ldots ,a_{j_k}] =0,$$
 whenever $ a_{j_1}, \ldots a_{j_k}$,
$1\le j_l\le n, 1\le k\le n$ 
contains at  least two different elements. 
\end{lemma}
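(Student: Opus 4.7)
The plan is to exploit the moment-cumulant relation \eqref{a4-c3-c13-eq:momcum} together with the block-factorisation of the cumulant functional, namely
$\kappa_p[a_1,\ldots,a_n] = \prod_{B\in p} \kappa_{|B|}[a_k : k\in B]$.
This factorisation is a consequence of the interval product decomposition of $\operatorname{NC}(n)$ recorded in Observation~\ref{a4-c3-c13-intervall-structure} combined with the multiplicativity of the M\"obius function across Cartesian products of posets. Once this identity is in hand, both implications are attacked by induction on $k$.

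For the direction ``vanishing of mixed cumulants $\Rightarrow$ freeness'', I would verify the freeness condition \eqref{a4-c3-c13-eq:free} directly. Set $x_l := a_{i_l}-\varphi(a_{i_l})$ with $i_1\neq i_2, i_2\neq i_3,\ldots,i_{m-1}\neq i_m$ and expand $\varphi(x_1\cdots x_m)$ via \eqref{a4-c3-c13-eq:momcum}: each summand equals $\prod_{B\in p}\kappa_{|B|}[x_k : k\in B]$. The combinatorial key is that every $p\in\operatorname{NC}(m)$ contains an \emph{interval block} $\{l,l+1,\ldots,l+r\}$, which one finds by peeling off a block whose convex hull contains no other block; in fact a quick induction on $m$ shows such a block always exists. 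If $|B|\geq 2$, then because the indices $i_l$ strictly alternate along consecutive positions, $B$ contains at least two distinct values from $\{1,\ldots,n\}$, so $\kappa_{|B|}$ vanishes by hypothesis. If $|B|=1$, the factor is $\kappa_1[x_l]=\varphi(x_l)=0$ by centering. Either way the summand is zero, so the full sum vanishes.

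For the converse ``freeness $\Rightarrow$ mixed cumulants vanish'', I would proceed by strong induction on $k$. The base case $k=2$ is immediate from the defining identity $\varphi(a_ia_j)=\varphi(a_i)\varphi(a_j)$ for $i\neq j$ in freeness. For $k>2$, assume all mixed cumulants of length $<k$ vanish. Expand $\kappa_k[a_{j_1},\ldots,a_{j_k}]$ via \eqref{a4-c3-c13-eq:cumdef} as an alternating sum of $\varphi_q[a_{j_1},\ldots,a_{j_k}]$ over $q\in\operatorname{NC}(k)$ coarsened from $1_k$, weighted by $\mu(1_k,q)$. The freeness property reduces each $\varphi_q$ (which is a product of joint moments, one per block of $q$) to an expression in the marginal moments $\varphi(a_{j_l}^{n})$, after possibly centring. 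On the other hand the moment-cumulant formula applied within each block, combined with the inductive hypothesis, shows that this reduction coincides with the contribution predicted by the M\"obius inversion. The two recursions match, and the alternating sum telescopes to zero.

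The real obstacle is this last telescoping step: one must verify that the freeness recursion for joint moments is \emph{precisely} the recursion dictated by the M\"obius function on $\operatorname{NC}(k)$. The cleanest way is to work with centred arguments and exploit the self-duality of $\operatorname{NC}(k)$ together with the interval-product structure already used above; this is carried out in \cite[Thm.~11.20]{a4-c3-c13-Nica-Speicher:2006}, which the lemma cites. The first implication is by contrast essentially a one-line combinatorial observation about interval blocks once the block factorisation of $\kappa_p$ is available.
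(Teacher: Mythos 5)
The paper does not actually prove this lemma --- it records it as a cited fact (``proved by induction on $n$'', with a pointer to Nica--Speicher) --- so your proposal can only be measured against the standard argument. Your forward direction (vanishing mixed cumulants $\Rightarrow$ freeness) has the right combinatorial core: the block factorisation $\kappa_p=\prod_{B\in p}\kappa_{|B|}$ together with the fact that every non-crossing partition possesses an interval block, which is killed either by centring (if a singleton) or by the vanishing hypothesis (if of size $\ge 2$, since it then contains two adjacent, hence distinct, indices). Two caveats, though. First, the factorisation of $\kappa_p$ over the blocks of $p$ comes from the order-ideal decomposition $p_{\preceq}\cong\prod_B\operatorname{NC}(B)$ of Observation~\ref{a4-c3-c13-intervall-structure}, so the M\"obius sum defining $\kappa_p$ must run over partitions \emph{finer} than $p$; as printed, \eqref{a4-c3-c13-eq:cumdef} sums over coarser ones, and you should pin down the convention before invoking the product structure. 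Second, and more seriously, you verify \eqref{a4-c3-c13-eq:free} only for the centred first powers $a_{i_l}-\varphi(a_{i_l})$, whereas freeness demands the vanishing of $\varphi$ on alternating products of centred \emph{powers} $a_{i_l}^{j_l}-\varphi(a_{i_l}^{j_l})$. The hypothesis only controls cumulants whose entries are the $a_j$ themselves; to conclude that mixed cumulants with entries $a_j^{r}$ also vanish one needs the formula for cumulants with products as arguments (Nica--Speicher, Thm.~11.12), which is the genuine workhorse of the proof and is absent from your sketch. (A related small point: identifying $\kappa_r$ of centred arguments with $\kappa_r$ of the original arguments uses that cumulants of order $\ge 2$ vanish when one entry is a scalar.)

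The converse direction is where the proposal really falls short: the paragraph about the two recursions ``matching'' and the sum ``telescoping to zero'' is precisely the step that would constitute the proof, and you explicitly defer it to the reference. In fact, for alternating centred arguments no matching of recursions is needed: the standard argument is the same interval-block computation run through \eqref{a4-c3-c13-eq:momcum} --- by induction every $\kappa_p$ with $p\neq 1_k$ contains a vanishing interval-block factor (a singleton gives $\varphi$ of a centred element, a larger interval block gives a shorter mixed cumulant), while $\varphi$ of the alternating centred product is $0$ by freeness, so $\kappa_k=0$. For non-alternating index tuples (e.g.\ $\kappa_3[a_1,a_1,a_2]$) one must again invoke the products-as-arguments formula to reduce to the alternating case. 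So the proposal identifies the correct combinatorial mechanism for one half of one direction, but as written it is not a complete proof of either implication.
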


In contrast to \eqref{a4-c3-c13-eq:freek}, this characterisation of
freeness holds even if the $\varphi(a_j)$ are non-zero.

For a partition ${ p} \in \operatorname{NC}(n)$, recall that
${ p}^\mathrm{c}$ denotes its \emph{Kreweras}~complement in
$ \operatorname{NC}(n)$.  Then, one shows that for free elements $a,b$
the following recursion involving the Kreweras complement holds:
	\begin{equation}\label{a4-c3-c13-eq:recursion}
	\kappa_n[ab, \ldots, ab]= \sum_{{ p} \in  \operatorname{NC}(n)} \kappa_{{ p}}[a,\ldots,a] \kappa_{{ p}^\mathrm{c}}[b,\ldots, b].
	\end{equation}
        See \cite[Rem.~14.5]{a4-c3-c13-Nica-Speicher:2006}.  This
        entails that the cumulants of $ab$ and thus by
        \eqref{a4-c3-c13-eq:momcum} the moments of $ab$ are indeed
        determined by multi-linear functionals of $a$ and $b$ alone
        which again by virtue of \eqref{a4-c3-c13-eq:cumdef} are
        determined by the moments of $a$ together with the moments of
        $b$. 
	
	The recursive equation \eqref{a4-c3-c13-eq:recursion} and the
        definition \eqref{a4-c3-c13-eq:cumdef} of cumulants may be
        conveniently encoded as algebraic relations between the
        following formal generating series. For $a \in \cA$ let
        $M_a(z)= \sum_{n=1}^\infty \varphi(a^n)z^n$ denote the moment
        generating series and with $\kappa_n(a):=\kappa_n[a,\ldots,a]$
        let $R_a(z):= \sum_{n=1}^{\infty} \kappa_n(a)z^n$ and
        $\cR_a(z):= z^{-1}R_a(z)$ denote cumulant generating
        series. In particular, for free self-adjoint $a,b \in \cA$ we
        get by binomial expansion of $\kappa_n(a+b)$ and Lemma
        \ref{a4-c3-c13-freechar} that
        $\kappa_n(a+b)=\kappa_n(a)+\kappa_n(b)$ and furthermore, as
        shown in \cite[Lect.~12]{a4-c3-c13-Nica-Speicher:2006},
 \begin{lemma}
   One has the following identities:
	\begin{align}
	R_{a+b}(z) &= R_a(z)+R_b(z),\label{a4-c3-c13-eq:rhom} \\
	R_a(zM_a(z)+z) &= M_a(z),\quad G_a\Big(\frac{1+R_a(z)}{z}\Big) =z,\label{a4-c3-c13-eq:rdef}
		\end{align}
                where
                $$G_a(z):=\frac{1}{z}+ \sum_{n=1}^\infty
                \frac{\varphi(a^n)}{z^{n+1}}=\frac1 z \Big(1+M_a(\frac
                1 z)\Big),$$
                can be identified with the Cauchy transform of the
                corresponding spectral measure $\mu_a$, that
                is $$G_a(z)=\int_\RR\frac{d\mu_a(t)}{z-t}.$$
\end{lemma}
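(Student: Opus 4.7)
The plan is to establish the three identities in sequence, drawing on the moment--cumulant formula \eqref{a4-c3-c13-eq:momcum}, Lemma~\ref{a4-c3-c13-freechar}, and the standard block-multiplicativity $\kappa_p[a,\ldots,a]=\prod_{B\in p}\kappa_{|B|}(a)$ that follows from \eqref{a4-c3-c13-eq:cumdef} by Möbius inversion on $\operatorname{NC}(n)$.

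For the additivity \eqref{a4-c3-c13-eq:rhom}, I would expand
\[
\kappa_n[a+b,\ldots,a+b] = \sum_{(c_1,\ldots,c_n)\in\{a,b\}^n} \kappa_n[c_1,\ldots,c_n]
\]
using multilinearity of $\kappa_n$. By Lemma~\ref{a4-c3-c13-freechar}, every summand in which both $a$ and $b$ appear vanishes, so only the pure contributions $\kappa_n(a)$ and $\kappa_n(b)$ survive. Multiplying by $z^n$ and summing over $n\geqslant 1$ then yields $R_{a+b}(z)=R_a(z)+R_b(z)$.

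The heart of the argument is the functional equation for $R_a$. Starting from $\varphi(a^n)=\sum_{p\in\operatorname{NC}(n)}\kappa_p[a,\ldots,a]$, I would organise the sum according to the block $B=\{1=i_1<i_2<\cdots<i_k\}$ of $p$ containing the element $1$. The key combinatorial observation is that non-crossingness forces every other block of $p$ to lie entirely inside one of the $k$ gaps $\{i_j+1,\ldots,i_{j+1}-1\}$ (for $1\leqslant j<k$) and $\{i_k+1,\ldots,n\}$, and that the restrictions of $p$ to these gaps are \emph{arbitrary} non-crossing partitions. Using block-multiplicativity of $\kappa_p$ and applying \eqref{a4-c3-c13-eq:momcum} to each gap gives
\[
\varphi(a^n) = \sum_{k\geqslant 1}\kappa_k(a) \sum_{\substack{m_1+\cdots+m_k = n-k\\ m_j\geqslant 0}} \varphi(a^{m_1})\cdots\varphi(a^{m_k}).
\]
The inner sum is the coefficient of $z^{n-k}$ in $(1+M_a(z))^k$; multiplying by $z^n$ and summing over $n\geqslant 1$ therefore produces
\[
M_a(z) = \sum_{k\geqslant 1} \kappa_k(a)\bigl(z(1+M_a(z))\bigr)^k = R_a\bigl(z(1+M_a(z))\bigr) = R_a(zM_a(z)+z).
\]

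The third identity follows from the second by a change of variable. Setting $w=z(1+M_a(z))$, the relation $R_a(w)=M_a(z)$ gives $1+R_a(w)=1+M_a(z)=w/z$, hence $z=w/(1+R_a(w))$. Substituting this into $G_a(\zeta)=(1+M_a(1/\zeta))/\zeta$ with $\zeta:=(1+R_a(w))/w$, so that $1/\zeta=z$, one finds $G_a(\zeta)=z(1+M_a(z))=w$; after renaming $w$ to $z$ this is precisely $G_a((1+R_a(z))/z)=z$. The main obstacle throughout is the combinatorial step in the second identity: one must carefully verify that, for a non-crossing partition of $[n]$ with distinguished block containing $1$, the remaining blocks split independently into non-crossing partitions of the gaps. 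Non-crossingness is essential here (and is precisely what fails for the full partition lattice, as Remark~\ref{a4-c3-c13-not-a-sublattice} records); once this bijection is in place, the remaining power-series manipulations are routine.
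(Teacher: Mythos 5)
Your argument is correct, and it is essentially the standard proof: the paper itself gives no proof beyond the remark that \eqref{a4-c3-c13-eq:rhom} follows from multilinearity and Lemma~\ref{a4-c3-c13-freechar} together with a citation to \cite[Lect.~12]{a4-c3-c13-Nica-Speicher:2006}, and the cited derivation of \eqref{a4-c3-c13-eq:rdef} is exactly your conditioning on the block of $p$ containing $1$, using the interval decomposition of non-crossing partitions and block-multiplicativity of $\kappa_p$. The change of variable $w=z(1+M_a(z))$ for the Cauchy-transform identity is likewise the standard step, so nothing is missing.
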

 Hence the so-called
   R-transform $\cR$ of a spectral measure 
   $\mu_a$, introduced by Voiculescu in \cite{a4-c3-c13-Voiculescu:2000}, is determined analytically
   by the inverse function of the Cauchy transform 
   of $\mu_a$ on the complex plane which is the 
   starting point of the complex analytic theory
   of the asymptotic approximations of free additive convolution as developed 
   in \cite{a4-c3-c13-CG11,a4-c3-c13-CG08a,a4-c3-c13-CG08b, A4-CG13}.
   Assuming that $\kappa_1=m_1  \ne 0$, $R_{\mu_a}(z):=R_a(z)$
   admits a formal inverse power series $R_a^{(-1)}(z)$. This 
   may be defined via the inverse function of
   the Cauchy transform of $\mu_a$, which is well defined in a certain  region in $\CC$.\index{R-transform of Voiculescu@$R$-transform of Voiculescu}
   
   The  so-called \emph{$S$-transform}
   \begin{equation}\label{a4-c3-c13-eq:rsrel}
   S_a(z):=\frac1 zR_a^{(-1)}(z) =\frac{1+z}{z} M_a^{(-1)}(z),
   \end{equation}
   of Voiculescu is a multiplicative homomorphism for free
   multiplicative convolution. \index{S-transform of Voiculescu@$S$-transform of Voiculescu}
   That is, see \cite[Lect.~18]{a4-c3-c13-Nica-Speicher:2006}, one has
   the following result.
   \begin{lemma}
   For two free self-adjoint positive elements $a,b\in \cA$, one has
   \begin{equation} 
   S_{ab}(z)= S_a(z) S_b(z)
   \end{equation}
   \end{lemma}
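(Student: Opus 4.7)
By \eqref{a4-c3-c13-eq:rsrel}, the claim $S_{ab}(z)=S_a(z)S_b(z)$ is equivalent to the product formula
\[
R_{ab}^{(-1)}(z) \;=\; \tfrac{1}{z}\,R_a^{(-1)}(z)\,R_b^{(-1)}(z)
\]
for the compositional inverses of the $R$-transforms. The plan is to derive this from the Kreweras-complement recursion \eqref{a4-c3-c13-eq:recursion} via a generating-function manipulation, using the defining relation \eqref{a4-c3-c13-eq:rdef} as the tool to pass between moment and cumulant series.

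Step 1 (converting \eqref{a4-c3-c13-eq:recursion} to generating functions). Multiply \eqref{a4-c3-c13-eq:recursion} by $z^n$ and sum over $n\ge 1$. Since cumulants factor across blocks, $\kappa_p[a,\ldots,a]=\prod_{B\in p}\kappa_{|B|}(a)$, and similarly for $p^{\mathrm c}$ with $b$, the right-hand side becomes a sum over pairs $(p,p^{\mathrm c})$ indexed by their joint profile of block sizes. Reorganising this sum by first fixing the block structure of $p$ and then summing over the induced structure of $p^{\mathrm c}$ (whose blocks, by Observation~\ref{a4-c3-c13-intervall-structure}, index the complementary regions and thus the intervals between successive $a$-slots) yields a closed functional identity of the shape
\[
M_{ab}(z) \;=\; R_a\!\bigl(z\,(1+M_{ab}(z))\,\Phi_b(z)\bigr),
\]
where $\Phi_b$ is a series built from $R_b$ and $M_{ab}$ whose role is symmetric in $a$ and $b$; here the factor $z(1+M_{ab}(z))$ is exactly the one appearing in \eqref{a4-c3-c13-eq:rdef}.

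Step 2 (inversion). Apply $R_a^{(-1)}$ to both sides. Using the identity $R_a^{(-1)}(w)=(1+w)M_a^{(-1)}(w)$ (obtained by setting $y=M_a^{(-1)}(w)$ in \eqref{a4-c3-c13-eq:rdef} so that $R_a(y(1+w))=w$), and the symmetric version for $b$, the relation from Step~1 collapses to
\[
R_{ab}^{(-1)}(w)\,\cdot\,w \;=\; R_a^{(-1)}(w)\,R_b^{(-1)}(w),
\]
after substituting $w=M_{ab}(z)$ and using $M_{ab}^{(-1)}(w)=(1+w)^{-1}R_{ab}^{(-1)}(w)$. Dividing by $w$ and invoking \eqref{a4-c3-c13-eq:rsrel} gives $S_{ab}(w)=S_a(w)S_b(w)$.

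The main obstacle is Step 1: the Kreweras complement is an anti-automorphism of $\operatorname{NC}(n)$, not a mere involution, and the enumeration of pairs $(p,p^{\mathrm c})$ with prescribed block sizes requires a careful bijective bookkeeping of how the $a$-blocks and $b$-blocks interleave around the $2n$ points of the cyclically arranged $(a,b)$-word. Once this combinatorial identity is established, Step~2 is a purely formal substitution in power series. Positivity of $a$ and $b$ ensures that $a^{1/2}ba^{1/2}$ has a genuine compactly supported spectral measure, so that $M_{ab}$ is analytic near $0$ and the formal identity has analytic content; the case $\kappa_1(a)\kappa_1(b)\neq 0$ guarantees the invertibility of the relevant series, and the degenerate case follows by continuity.
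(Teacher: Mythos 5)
The paper offers no proof of this lemma at all: it simply cites \cite[Lect.~18]{a4-c3-c13-Nica-Speicher:2006}, so there is nothing in the text to compare your argument against line by line. Your overall architecture does match the standard proof behind that citation: start from the Kreweras-complement recursion \eqref{a4-c3-c13-eq:recursion}, turn it into a functional identity between generating series, and invert using \eqref{a4-c3-c13-eq:rdef} and \eqref{a4-c3-c13-eq:rsrel}. Your reduction of the claim to $w\,R_{ab}^{(-1)}(w)=R_a^{(-1)}(w)\,R_b^{(-1)}(w)$ and your derivation of $R_a^{(-1)}(w)=(1+w)M_a^{(-1)}(w)$ from \eqref{a4-c3-c13-eq:rdef} are both correct.

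The genuine gap is Step 1, and it is not a small one: it is the entire content of the theorem. You assert that resumming \eqref{a4-c3-c13-eq:recursion} against $z^n$ produces an identity ``of the shape'' $M_{ab}(z)=R_a\bigl(z\,(1+M_{ab}(z))\,\Phi_b(z)\bigr)$ with an unspecified series $\Phi_b$, and then claim in Step 2 that this ``collapses'' to the product formula. But Step 2 is only a formal substitution once $\Phi_b$ is known explicitly; with $\Phi_b$ undetermined, the collapse cannot be verified, and indeed any number of wrong candidates for $\Phi_b$ would fail to produce the multiplicativity. The combinatorial work you defer --- controlling how the blocks of $p$ and of $p^{\mathrm{c}}$ interleave, which is what makes the ``Fourier transform'' $f\mapsto \frac{1}{z}f^{\langle -1\rangle}(z)$ turn the Kreweras-indexed convolution $\sum_{p\in\operatorname{NC}(n)}\kappa_p[a]\,\kappa_{p^{\mathrm{c}}}[b]$ into a product --- is precisely Theorem~18.14 of \cite{a4-c3-c13-Nica-Speicher:2006}, and its proof is a nontrivial induction/bijection, not bookkeeping that can be waved through. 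As written, your argument proves the lemma only modulo an unproven combinatorial identity that is equivalent to the lemma itself. To close the gap you would need either to carry out that resummation explicitly (identifying $\Phi_b$ and verifying the cancellation), or to follow the reduction in \cite[Lect.~18]{a4-c3-c13-Nica-Speicher:2006}, which first proves the special case $g=\zeta$ (the moment--cumulant functional equation \eqref{a4-c3-c13-eq:rdef}) and then bootstraps to general $g$ using associativity of the underlying convolution on $\operatorname{NC}$.
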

   Since $S_a$ is determined by the spectral measure of $a$, this
   means with $S_{\mu_a}:=S_a$ for measures $\mu =\mu_a$, $\nu=\mu_b$
   we have $ S_{\mu\boxtimes \nu}(z)= S_{\mu}(z) S_{\nu}(z)$, which
   uniquely determines the multiplicative 
   free convolution $\mu\boxtimes \nu$ in terms of the measures $\mu$
   and $\nu$ on the positive reals via the characterising property of
   the $S$-transform.

   Note that by \eqref{a4-c3-c13-eq:smoment}, Let $s$ be a
   semi-circular element as in \eqref{a4-c3-c13-eq:smoment}. Then the
   moment generating functions of $s$ and $s^2$ are given by
   $M_{s}=f(z^2)$ and $M_{s^2}(z)=f(z)$ respectively, where
   $f(z) = (1- \sqrt{1- 4z})/(2z) -1$.  The corresponding distribution
   of $s^2$ is called Marchenko-Pastur or free Poisson law; it is
   given by the density $p(x):=\frac 1 {2\pi} \sqrt{4/x-1}$ on the
   interval $[0,4]$.  Via the inverse function
   $f^{(-1)}(z)=z(1 + z)^{-2}$ of $f$ we obtain in view of
   \eqref{a4-c3-c13-eq:rsrel},
\begin{equation}\label{a4-c3-c13-eq:smp}
S_{s^2}(z)=f^{(-1)}(z)\frac{1+z}{z}= \frac 1{1+z}
\end{equation}
and hence in view of \eqref{a4-c3-c13-eq:rsrel} again
$R_{s^2}^{(-1)}(z)=\frac{z}{1+z}$ or $R_{s^2}(z)= \frac{z}{1-z}$,
whereas from \eqref{a4-c3-c13-eq:rdef} we deduce with
$g(z):=z(1+M_a(z))$ and $g^{(-1)}(z)=\frac{z}{1+z^2}$ and hence
$R_s(z)= \frac{z} {g^{(-1)}(z)} -1=z^2$.

From here, we obtain for free variables $t_1, \ldots, t_l$ with
identical distribution given by $s^2$, the so-called Marchenko--Pastur
distribution, in view of \eqref{a4-c3-c13-eq:smp}
\begin{equation}
S_{t_1\ldots t_l}(z) = S_{t_1}(z)^l = \frac{1} {(1+z)^l},
\end{equation}
which determines the so-called free Bessel distributions, $\mu_l$ with
support in $[0,K_l]$, $K_l=(l+1)^{l+1}/l^l$. Their moments are given
by the so called Fuss--Catalan numbers, that is, if an element
$a\in \cA$ has $S$-transform $S_a(z)=\frac{1}{(1+z)^l}$ we have
\begin{equation}
  \varphi(a^k)=\frac{1}{lk+1}\binom{lk+1}{k}=:C_{k,l}, \qquad \text{for all } \,  k \ge 1.
\end{equation}
The proof is based on combinatorial properties of non crossing
partitions, see
\cite{a4-c3-c13-Banica-Belinschi-Capitaine-Collins:2011}.

\begin{proposition}
  For a sequence of $N\times N$ independent non-Hermitian random
  matrices, $G_1, \ldots G_l$, with independent Gaussian centered
  entries with variance $1/N$, let $W:=G_1\cdots G_l$. Consider the
  normalised moments of $W W^*$.  As $N\to \infty$ they converge as
  follows:
\begin{equation}\label{a4-c3-c13-eq:rmtmom}
\lim_{N\to \infty}\frac 1 N  \int_{\Omega} 	\tr(W W^*)^l \, d\mathbf{P} = \int_0^{K_l} x^k  d\mu_l  = C_{k,l} 
\end{equation}
\end{proposition}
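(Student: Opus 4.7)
The plan is to establish asymptotic freeness of the matrices $G_1, \ldots, G_l$ and then reduce to the $S$-transform computation already performed in the text. I would first invoke Voiculescu's asymptotic freeness theorem for independent Gaussian random matrices: as $N \to \infty$, the family $(G_1, \ldots, G_l)$, equipped with the tracial state $\varphi_N(a) := \frac{1}{N}\EE\tr(a)$, converges in $*$-distribution to a $*$-free family $(c_1, \ldots, c_l)$ of standard circular elements in some $C^*$-probability space $(\cA, \varphi)$. In particular $\varphi_N((WW^*)^k) \to \varphi((c_1\cdots c_l c_l^*\cdots c_1^*)^k)$, so the task reduces to computing the right-hand side.

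Each $c_i c_i^*$ is a positive element with the Marchenko--Pastur (free Poisson) law, coinciding with the law of $s^2$ for a semi-circular $s$; hence by the text's computation $S_{c_i c_i^*}(z) = \frac{1}{1+z}$. To assemble these into the $S$-transform of $WW^*$, I would apply the multiplicativity lemma iteratively. Setting $A := c_1\cdots c_{l-1}$ and $B := c_l c_l^*$, traciality gives $\varphi((ABA^*)^k) = \varphi((A^*AB)^k)$ for every $k \in \NN$, so $WW^*$ and $A^*A \cdot B$ have identical moments and in particular the same $S$-transform. Since $A^*A$ lies in the $*$-subalgebra generated by $\{c_1, \ldots, c_{l-1}\}$ while $B$ lies in the $*$-subalgebra generated by $\{c_l\}$, $*$-freeness of the family implies that the positive elements $A^*A$ and $B$ are free. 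The multiplicativity lemma therefore yields $S_{WW^*}(z) = S_{A^*A}(z) \cdot S_{c_l c_l^*}(z)$, and iterating the same reduction on $A^*A$ gives
\[
 S_{WW^*}(z) \;=\; \prod_{i=1}^l S_{c_i c_i^*}(z) \;=\; \frac{1}{(1+z)^l}.
\]

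By the computation immediately preceding the proposition in the text, an element whose $S$-transform equals $\frac{1}{(1+z)^l}$ has $k$-th moment equal to the Fuss--Catalan number $C_{k,l}$. Combining this with the asymptotic freeness input yields $\lim_{N\to\infty} \varphi_N((WW^*)^k) = C_{k,l}$, which is the claim.

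The main obstacle is the first step: Voiculescu's asymptotic freeness theorem for independent Gaussian matrices. Its standard proof proceeds via a genus (equivalently, non-crossing partition) expansion of mixed Gaussian moments and a verification that contributions coming from crossing pair-partitions carry an extra power of $1/N$ and are thus asymptotically negligible, exactly parallel to the passage from \eqref{a4-c3-c13-eq:moment} to the count of non-crossing pair-partitions in Section~2. Once this input is granted, the remainder of the argument is a purely formal $S$-transform computation built from the lemmas already established; the only non-trivial auxiliary point is the cyclic reduction $\varphi((ABA^*)^k) = \varphi((A^*AB)^k)$, which is immediate from traciality.
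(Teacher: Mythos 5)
Your argument is essentially the paper's own proof: both rest on the cyclic rearrangement $\varphi((ABA^*)^k)=\varphi((A^*AB)^k)$ supplied by traciality, the (asymptotic) freeness of the two resulting positive factors, the multiplicativity of the $S$-transform, and an induction on $l$ that reduces everything to the Marchenko--Pastur law of a single factor. The only (immaterial) differences are that you pass to the limiting $*$-free circular family before performing the rearrangement, whereas the paper manipulates the traces at finite $N$ and invokes asymptotic freeness of $(G_2\cdots G_l G_l^*\cdots G_2^*)$ and $G_1^*G_1$ directly, and that you peel off the last factor $c_l$ rather than the first factor $G_1$.
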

This can be shown by induction,  using
\begin{equation}
\tr(WW^*)^k= \tr(G_1(G_2\cdots G_l G_l^* \cdots G^*_1G_1)^{l-1}G_2\cdots G_l\cdots G^*_l \cdots G^*_1),
\end{equation}
which by moving $G_1$ to the right yields
\begin{align*}
  & \tr ((G_2\cdots G_l G_l^* \cdots G^*_1G_1)^{l-1}
  G_2\cdots G_l\cdots G^*_l \cdots G^*_1G_1)
  \\
  & \qquad\qquad\qquad\qquad\qquad\qquad = \tr((G_2\cdots G_l G_l^* \cdots G^*_1G_1)^l)
  \\
  & \qquad\qquad\qquad\qquad\qquad\qquad = \tr\Bigl((G_2\cdots G_l G_l^* \cdots G_2^*)(G_1^*G_1)\Bigr)^l.
\end{align*}
Since $(G_2\cdots G_l G_l^* \cdots G_2^*)$ and $G_1^*G_1$ are
asymptotically free 
of this volume, we get by induction for the asymptotic distribution of
$\pi_l$ the recursion $\pi_l = \pi_{l-1}\boxtimes\pi_1$, where $\pi_1$
can be identified with the limiting Marchenko--Pastur distribution of
$G_1G_1^*$. For arbitrary $N\times N$ independent Wigner matrices
(which are Hermitian matrices with entries which are independent
random variables unless restricted by symmetry) the relation
\eqref{a4-c3-c13-eq:rmtmom} has been shown by combinatorial techniques
after an appropriate regularization in \cite{a4-c3-c13-AGT10}. For
more details on the asymptotic spectral distribution of products of
so-called Girko--Ginibre matrices (having independent and identically
distributed random entries) and their inverses using the free
probability calculus, see \cite{a4-c3-c13-GKT15}. Strictly speaking
one needs to extend the non-commutative $C^*$-probability spaces to
spaces of unbounded operators to include distributions with
non-compact support like those of Gaussian matrices see
e.g. \cite{a4-c3-c13-CG11}.

Remarkably, the same results hold for powers instead of products.
Since $G_1^{l-1} (G_1^{l-1})^*$ and $G_1^* G_1$
are also asymptotically free,
a similar argument as above shows that
the asymptotic distribution of $(G_1^l) (G_1^l)^*$
is also given by $\pi_l$.
Similarly as above, these results also extend
to powers of non-Gaussian random matrices.

The calculus of $S$-transforms may even be used
to describe the asymptotic spectral measure of $W W^*$
when some of the factors in $W = G_1 \cdots G_l$ are inverted,
after appropriate regularisation of the inverse matrices
\cite{a4-c3-c13-GKT15}.
For instance, for $W = G_1 G_2^{-1}$,
the limiting distribution of $W W^*$
is given by the square of a Cauchy distribution.

Moreover, the calculus of $R$-transforms
makes it possible, at least in principle,
to deal with the case where $W$
is a sum of independent products as above
\cite{a4-c3-c13-KT15}.
For instance, for $W = G_1 G_2^{-1} + G_3 G_4^{-1}$,
the limiting distribution of $W W^*$
is also given by the square of a Cauchy distribution.
This is related to the Cauchy distribution
being ``stable'' under free additive convolution.

  \section{Braid groups}\label{a4-c3-c13-braid-groups}
    Let $\mathbb{D}$ be the unit disk. The \emph{braid group}
    $\cB_{n}$ on $n$ strands can be defined
    as the fundamental group of the \emph{configuration space}
    \[
      X_{n}:=
      \{\,\,
        \{\,z_{1},\ldots,z_{n}\,\}
        \subset
        \mathbb{D}
      \,\,|\,\,
        z_{i}\neq z_{j}
        \text{\ for\ }
        i\neq j
      \,\,\}
    \]
    of unordered $n$-point-subsets in $\mathbb{D}$. One can visualize
    a path in $X_{n}$ as a collection of $n$ distinct points moving
    continuously in $\mathbb{D}$ subject only to the restriction that
    points are not allowed to collide. Since $X_{n}$ is connected, the
    braid group (up to isomorphism) does not depend on the choice of a
    base point.
    
    We find it convenient to choose as the base point a set
    \(
      S
      =
      \{\,
        v_{1},\ldots,v_{n}
      \,\}
    \)
    of $n$ points on the boundary circle $\partial\, \mathbb{D}$
    numbered in counter-clockwise order.

    Then, we regard $\operatorname{NC}(n)$ as the poset
    of non-crossing partitions of the set $S$, i.e., for any
    two distinct blocks of the partition, their convex hulls do not
    intersect.
    A non-crossing partition $p\in\operatorname{NC}(n)$ can be
    interpreted as a braid on $n$ strands as follows: for each
    block
    \(
      B=
      \{\,
        v_{\alpha_{1}},\ldots,v_{\alpha_{k}}
      \,\}
    \), consider the counter-clockwise rotation
    of the block by one step:
    \[
      \varrho_{B} :
      v_{\alpha_{1}} \mapsto
      v_{{\alpha_{2}}} \mapsto \cdots \mapsto
      v_{\alpha_{k}} \mapsto v_{\alpha_{1}}
    \]
    The product
    \[
      \sigma_{p}
      :=
      \prod_{B\,:\text{\ block of\ }p}
        \varrho_{B} 
    \]
    describes a loop in the configuration space $X_{n}$,
    which does not depend (up to homotopy relative to the basepoint) on the
    order of factors.
    We identify it with the corresponding element of the
    fundamental group $\cB_{n}$.
    \begin{figure}[h]
      \begin{center}
        \raisebox{0pt}{\includegraphics{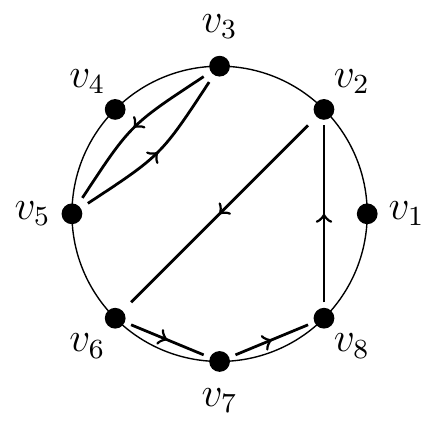}}
  
        \caption{The path (braid) that is associated to the partition     
        \(
          \{\,
            \{\,1\,\},
            \{\,2,6,7,8\,\},
            \{\,3,5\,\},
            \{\,4\,\}
          \,\}
        \).\label{a4-c3-c13-fig-braid}}
      \end{center}
    \end{figure}
    \begin{fact}\label{a4-c3-c13-artin-presentation-braid-groups}
      \index{braid group!Artin presentation of}
      The braid group $\cB_{n}$ is generated by
      the braids $\sigma_{i}$ corresponding to the
      counter-clockwise rotations
      \(
        v_{i}\mapsto v_{i+1}\mapsto v_{i}
      \)
      for $i=1,\ldots,n-1$.

      In terms of these generators, the braid group $\cB_{n}$
      admits the following presentation:
      \[
        \cB_{n}
        =
        \left\langle\,{
          \sigma_{1},\ldots,\sigma_{n-1}
        }\,\,\,\vrule\,\,
          \begin{array}{ll}
            \sigma_{i}\sigma_{j}
            =
            \sigma_{j}\sigma_{i}
            &
            \text{\ for\ }
            |i-j| \geqslant 2
            \\
            \sigma_{i}\sigma_{j}\sigma_{i}
            =
            \sigma_{j}\sigma_{i}\sigma_{j}
            &
            \text{\ for\ }
            |i-j| =1
          \end{array}
        \,\right\rangle
      \]
    \end{fact}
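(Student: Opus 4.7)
The plan is to (i) verify that the Artin relations hold geometrically, (ii) show that the $\sigma_{i}$ generate $\cB_{n}$, and (iii) prove that the relations are sufficient. Step (iii) is the main obstacle.

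For (i), recall that $\sigma_{i}$ is realised by the elementary rotation $\varrho_{\{v_{i},v_{i+1}\}}$, carried out along a short arc near the boundary. When $|i-j|\geqslant 2$, the arcs swept by $\sigma_{i}$ and $\sigma_{j}$ have disjoint support in $\mathbb{D}$, so the two loops can be performed simultaneously and hence commute in $\pi_{1}(X_{n})$. When $|i-j|=1$, both words $\sigma_{i}\sigma_{i+1}\sigma_{i}$ and $\sigma_{i+1}\sigma_{i}\sigma_{i+1}$ realise the full cyclic rotation $\varrho_{\{v_{i},v_{i+1},v_{i+2}\}}$ of three consecutive points, and an explicit homotopy confined to a neighbourhood of these three points connects the two loops.

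For (ii), one shows that any loop $\gamma$ in $X_{n}$ based at $\{v_{1},\ldots,v_{n}\}$ can, after a small generic perturbation, be decomposed into finitely many elementary moves each of which swaps exactly two strands, with those two strands adjacent in the cyclic order around $\partial\,\mathbb{D}$ at the moment of the swap. Each elementary move contributes some $\sigma_{k}^{\pm 1}$ after conjugating by the braid that returns the intermediate configuration to the standard one, so $\gamma$ represents a word in the $\sigma_{i}$.

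The hard part is (iii). The approach I would take is the classical one via the Fadell--Neuwirth fibration. On the ordered configuration space $F_{n}(\mathbb{D})$, forgetting the last point yields a locally trivial fibration over $F_{n-1}(\mathbb{D})$ whose fiber is a disk with $n-1$ punctures, hence a $K(F_{n-1},1)$ with $F_{n-1}$ free of rank $n-1$. Since the base is aspherical, the long exact sequence of homotopy groups reduces to a split short exact sequence
\[
1\longrightarrow F_{n-1}\longrightarrow P_{n}\longrightarrow P_{n-1}\longrightarrow 1
\]
of pure braid groups, exhibiting $P_{n}$ as an iterated semidirect product of free groups and yielding an inductive presentation of $P_{n}$. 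Combining this with the extension $1\to P_{n}\to \cB_{n}\to \Sigma_{n}\to 1$ and the Coxeter presentation of $\Sigma_{n}$ by adjacent transpositions (governed by exactly the same two families of relations, supplemented only by $s_{i}^{2}=1$), the remaining task is to verify that, modulo the Artin relations on the $\sigma_{i}$, the conjugation action of the $\sigma_{i}$ on the fiber generators matches the one dictated by the fibration. This diagrammatic matching, carried out for one new strand at each inductive step, forces no further relations and establishes the presentation.
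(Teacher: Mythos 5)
The paper itself offers no argument for this Fact: it is quoted as Artin's classical presentation, so there is no internal proof to compare against. Your overall route --- verify the relations geometrically, prove generation by a general-position argument, and then establish sufficiency via the Fadell--Neuwirth fibration, the resulting split extensions $1\to F_{n-1}\to P_{n}\to P_{n-1}\to 1$, and the extension $1\to P_{n}\to\cB_{n}\to S_{n}\to 1$ --- is the standard Artin--Birman strategy and is viable in principle.

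There is, however, a concrete error in step (i). You assert that $\sigma_{i}\sigma_{i+1}\sigma_{i}$ and $\sigma_{i+1}\sigma_{i}\sigma_{i+1}$ both realise the one-step cyclic rotation $\varrho_{\{v_{i},v_{i+1},v_{i+2}\}}$. This cannot be right: under $\pi\colon\cB_{n}\to S_{n}$ the rotation $\varrho_{B}$ of a three-element block maps to a $3$-cycle, whereas $s_{i}s_{i+1}s_{i}$ is a transposition. In the paper's own framework the one-step rotation of $\{v_{i},v_{i+1},v_{i+2}\}$ factors as $\sigma_{i}\sigma_{i+1}$ (a length-two chain in $\operatorname{NC}(n)$, cf.\ Facts~\ref{a4-c3-c13-set} and~\ref{a4-c3-c13-triangular-presentation}); the common value of the two three-letter words is instead the positive \emph{half-twist} of a disk neighbourhood of the three points (the Garside element of the local $\cB_{3}$), and that is the isotopy one should exhibit. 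The relation is true and the repair is routine, but the identification as written would fail. Separately, step (iii) as written is only a plan: the entire content of the sufficiency proof --- Artin's combing computation, i.e.\ writing the pure braid generators $A_{ij}$ as words in the $\sigma_{k}$, deriving the presentation of $P_{n}$ inside the abstractly presented group, and checking that the $S_{n}$-extension data introduces no new relations --- is deferred to a single sentence (``forces no further relations''). As it stands the proposal is a correct outline of a classical proof with one false geometric identification and with its decisive step left unexecuted.
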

    There is an obvious homomorphism
    \[
      \pi:
      \cB_{n}\longrightarrow S_{n}
    \]
    from the braid group on $n$ strands to the symmetric group on $n$
    letters. A braid corresponds to a motion of the $n$ points
    $v_{1},\ldots,v_{n}$, and at the end of this motion, the dots may
    have changed positions. This way, each braid induces a
    permutation.
    \begin{fact}
      The homomorphism 
      \(
        \pi:
        \cB_{n}\longrightarrow S_{n}
      \)
      is onto. On the level of presentations, it amounts to making the
      generators $\sigma_{i}$ involutions.
      Formally: the symmetric group has the presentation
      \[
        S_{n} =
        \left\langle\,{
          s_{1},\ldots,s_{n-1}
        }\,\,\,\vrule\,\,
          \begin{array}{ll}
            s_{i}s_{j}
            =
            s_{j}s_{i}
            &
            \text{\ for\ }
            |i-j| \geqslant 2
            \\
            s_{i}s_{j}s_{i}
            =
            s_{j}s_{i}s_{j}
            &
            \text{\ for\ }
            |i-j| =1
            \\
            s_{i}=s_{i}^{-1}
            &
            \text{\ for all\ }i
          \end{array}
        \,\right\rangle
      \]
      and the homomorphism $\pi$ is sending $\sigma_{i}$
      to $s_{i}$.
    \end{fact}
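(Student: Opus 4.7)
The plan is to unpack the Fact into three claims: (i) computing $\pi(\sigma_i)$, (ii) surjectivity of $\pi$, and (iii) the asserted presentation of $S_n$.

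First I would read off $\pi(\sigma_i)$ directly from the geometric definition. The generator $\sigma_i$ is the braid associated to the non-crossing partition whose unique non-singleton block is $\{v_i,v_{i+1}\}$ rotated counter-clockwise. Under this loop in configuration space, the points $v_i$ and $v_{i+1}$ are interchanged while all other $v_j$ return to their starting positions, so $\pi(\sigma_i)$ is the transposition $(i,\,i+1)=:s_i$. Since the adjacent transpositions $s_1,\ldots,s_{n-1}$ generate $S_n$ (a standard bubble-sort argument), $\pi$ is surjective.

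Next I would establish one direction of the presentation. Let $G$ denote the group defined by the proposed presentation and let $\varphi:G\to S_n$ send each $s_i$ to the adjacent transposition $(i,i+1)$. The braid-type relations hold in $S_n$ because $\pi$ is a homomorphism from $\cB_n$ (which satisfies these relations by Fact~\ref{a4-c3-c13-artin-presentation-braid-groups}), and the involutive relations $s_i^2=1$ are obvious for transpositions. Hence $\varphi$ is well-defined and, by the argument in the previous paragraph, surjective.

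The hard part, as usual, is to show that $\varphi$ is injective, i.e. that $|G|\le n!$. I would prove this by induction on $n$. Let $H\le G$ be the subgroup generated by $s_1,\ldots,s_{n-2}$; by the inductive hypothesis $|H|\le (n-1)!$, so it suffices to exhibit at most $n$ right cosets of $H$ in $G$. A standard choice is the family
\[
  T=\{\,1,\; s_{n-1},\; s_{n-1}s_{n-2},\; \ldots,\; s_{n-1}s_{n-2}\cdots s_1\,\},
\]
and one verifies using only the defining relations that $Hs_i\subseteq HT$ for each generator $s_i$ and each element of $T$, so $HT$ is stable under right multiplication by every generator and hence equals $G$. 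This gives $|G|\le n\cdot|H|\le n!=|S_n|$, forcing $\varphi$ to be an isomorphism. Putting the pieces together, $\pi$ factors as $\cB_n\twoheadrightarrow G\xrightarrow{\sim}S_n$, proving both surjectivity and the presentation; the main obstacle is precisely this coset-stability computation, which is routine but must be done carefully case-by-case using the braid relations to push $s_{n-1}$'s into normal form.
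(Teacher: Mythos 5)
The paper offers no proof of this Fact at all: it is stated as a classical result (the presentation goes back to Moore, and the surjectivity of $\pi$ is folklore), so there is no ``paper's approach'' to compare against. Your argument is the standard and correct way to actually prove it. The identification $\pi(\sigma_i)=(i,\,i+1)$ from the geometric description of the loop, the surjectivity via adjacent transpositions, and the well-definedness of $\varphi\colon G\to S_n$ are all fine. The coset-counting induction is the right tool and does go through: for $t=s_{n-1}\cdots s_k\in T$ one checks $Hts_i\subseteq HT$ by cases ($i<k-1$ commutes past everything; $i=k-1$ extends $t$ within $T$; $i=k$ cancels by the involution relation; $i=k+1$ and $i>k+1$ reduce to $s_{i-1}t$ or $s_kt$ after one braid relation and commutations), and each case uses only the stated relations. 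Two minor points you should make explicit in a written-up version: the inductive hypothesis applies to $H$ because $H$ is a quotient of the abstractly presented group on $s_1,\ldots,s_{n-2}$ (not a priori isomorphic to it), which still gives $\lvert H\rvert\le (n-1)!$; and the conclusion that $\pi$ realises the quotient by forcing $\sigma_i^2=1$ follows because $\pi$ factors through $G$ and $G\cong S_n$. Neither is a gap, just bookkeeping.
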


    \emph{Strand diagrams}\index{braid!strand diagram}
    are another frequently used visual representation of
    braids. Recall that a braid is given by a path in configuration space, i.e.
    the simultaneous motion of $n$ points in the disk $\mathbb{D}$.
    Parametrizing time by a real number in $[0,1]$, each of those moving points
    traces out a ``strand'' in $\mathbb{D}\times[0,1]$. The diagrams we have
    used so far can be regarded as a ``top view'' onto the cylinder
    $\mathbb{D}\times[0,1]$. A strand diagram is a view from the front.
    Here, it is useful to put the initial configuration $U$ with the
    hemicircle fully visible from the front.
    Figure~\ref{a4-c3-c13-fig-strand-diagram} shows the two representations
    of the generator $\sigma_{2}$ in $\cB_{5}$. Here, the
    generator $\sigma_{i}$ corresponds to a \emph{crossing}
    of the $i^\text{th}$ and the $(i+1)^\text{th}$ strands.
    The left strand runs over the right strand. We call such a crossing
    \emph{positive}. The inverses of the generators correspond to
    \emph{negative} crossings.
    \begin{figure}[h]
      \begin{center}
        \raisebox{0pt}{\includegraphics{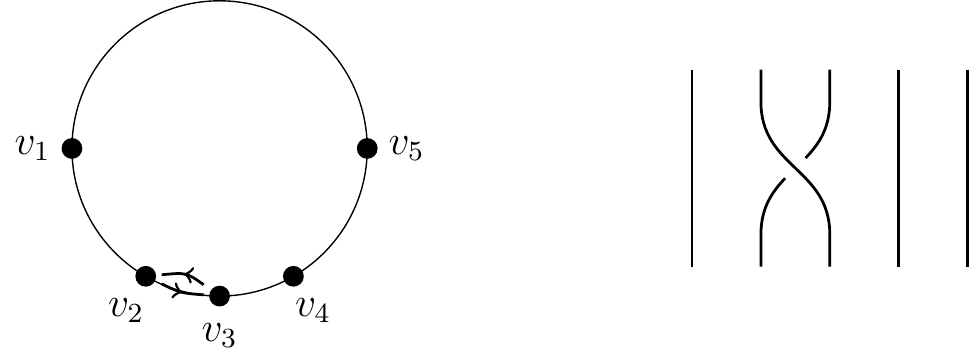}}
          
        \caption{The generator $\sigma_{2}$ in the braid group on five strands.
          On the left, the ``top view'' representation is shown whereas and on the
          right we have the ``front view'' given by a strand
          diagram.\label{a4-c3-c13-fig-strand-diagram}}
      \end{center}
    \end{figure}

    \subsection{A classifying space for the braid group}
    \label{a4-c3-c13-classifingy-space-for-braid-group}
    Tom~Brady~\cite{a4-c3-c13-Brady2001} has given a construction of a
    classifying space for braid groups that is strongly related to
    non-crossing partitions and has found some interesting applications.
    
    Recall that the \emph{Cayley graph}
    \(
      \operatorname{CG}_{\Sigma}(G)
    \)
    of a group $G$ relative to a specified generating set
    $\Sigma$ is the graph with vertex set $G$ and edges
    connecting $g$ to $gx$
    for any $g\in G$ and
    \(
      x\in\Sigma\setminus\{\,1\,\}
    \).
    Note that the requirement $x\neq 1$ rules out loops.
    Obviously, there is more structure here: the edge is oriented
    from $g$ to $gx$ and should
    be regarded as labeled by the generator $x$.
    \begin{observation}
      Since $\Sigma$ is a generating set for $G$, the Cayley
      graph $\operatorname{CG}_{\Sigma}(G)$ is connected: if we can write
      an element $g$ as a word
      \[
        g
        =
        x_{1}^{\varepsilon_{1}}
        \cdots
        x_{k}^{\varepsilon_{k}}
      \]
      in the generators and their inverses, then
      \[
        1
        \,\,-\negthinspace\negthinspace\negthinspace-\,\,
          x_{1}^{\varepsilon_{1}}
        \,\,-\negthinspace\negthinspace\negthinspace-\,\,
          x_{1}^{\varepsilon_{1}}
          x_{2}^{{\varepsilon_{2}}}
        \,\,-\negthinspace\negthinspace\negthinspace-\,\,
          x_{1}^{\varepsilon_{1}}
          x_{2}^{{\varepsilon_{2}}}
          x_{3}^{{\varepsilon_{3}}}
        \,\,-\negthinspace\negthinspace\negthinspace-\,\, \cdots \,\,-\negthinspace\negthinspace\negthinspace-\,\,
        g
      \]
      is an edge path connecting the identity element $1$ to $g$.
      Note that the exponents of the generators tell us whether to traverse
      edges with or against their orientation.\qed      
    \end{observation}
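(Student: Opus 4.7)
The plan is to exhibit, for every element $g\in G$, an edge path in $\operatorname{CG}_{\Sigma}(G)$ from the identity $1$ to $g$; connectedness of the whole graph then follows, since any two vertices can be joined via $1$ as an intermediate vertex. Because $\Sigma$ is a generating set, I can write $g = x_{1}^{\varepsilon_{1}}\cdots x_{k}^{\varepsilon_{k}}$ with each $x_{i}\in\Sigma\setminus\{1\}$ and $\varepsilon_{i}\in\{+1,-1\}$ (trivial factors can always be dropped). I would then introduce the sequence of partial products $g_{i} := x_{1}^{\varepsilon_{1}}\cdots x_{i}^{\varepsilon_{i}}$ for $0\leqslant i\leqslant k$, so that $g_{0}=1$, $g_{k}=g$, and $g_{i}=g_{i-1} x_{i}^{\varepsilon_{i}}$.

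The main step is to check that each consecutive pair $(g_{i-1},g_{i})$ is joined by a single edge of $\operatorname{CG}_{\Sigma}(G)$. If $\varepsilon_{i}=+1$, then $g_{i}=g_{i-1}x_{i}$, which is precisely the edge emanating from $g_{i-1}$ labelled by the generator $x_{i}$, traversed in its natural orientation. If $\varepsilon_{i}=-1$, then $g_{i-1}=g_{i}x_{i}$, so the $x_{i}$-labelled edge emanating from $g_{i}$ ends at $g_{i-1}$; in this case the same edge connects $g_{i-1}$ and $g_{i}$ but is traversed against its orientation. Concatenating these $k$ edges gives the required edge path from $1$ to $g$.

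There is no genuine obstacle here, since the statement is essentially an unpacking of the definition of the Cayley graph. The only subtlety worth flagging is that $\Sigma$ is not assumed to be closed under inversion, so letters of the form $x_{i}^{-1}$ cannot be realised as edges in their own right; this is exactly the reason for the parenthetical remark that negative exponents correspond to traversing $x_{i}$-labelled edges backwards. No additional hypothesis on $\Sigma$ beyond being a generating set is required.
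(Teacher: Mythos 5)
Your proposal is correct and follows exactly the argument the paper embeds in the observation itself (the paper supplies no separate proof beyond the displayed edge path): you form the partial products, identify each step with an edge traversed with or against its orientation according to the sign of the exponent, and concatenate. The only additions — dropping trivial factors and routing arbitrary pairs of vertices through the identity — are harmless and standard.
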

    
    There are two generating sets for the braid group (and the symmetric group)
    of particular interest to us. First, we consider the digon generators
    \(
      \sigma_{ij}
    \)
    corresponding to the counter-clockwise rotation
    \(
      v_{i}\mapsto v_{j}\mapsto v_{i}
    \).
    Let $\cB^{*}_{n}$ be the
    Birman--Ko--Lee-monoid~\cite[Section~2]{a4-c3-c13-BKL}, i.e.,
    the monoid generated by all the
    $\sigma_{ij}$. We remark that $\cB^{*}_{n}$
    is strictly larger than the submonoid of positive braids (those that can
    be drawn using positive crossings only), which is the monoid generated by the
    $\sigma_{i}$. We define a partial order on the braid group by:
    \[
      \beta\leq\beta'
      \quad:\Longleftrightarrow\quad
      \beta^{-1}\beta'\in\cB^{*}_{n}
    \]
    
    The image $s_{ij}\in S_{n}$ of
    $\sigma_{ij}$ in the symmetric group is a transposition.
    Consider the Cayley graph of the symmetric group $S_{n}$
    with respect to the generating set $T\subseteq S_{n}$
    of all transpositions. We define a partial order,
    called the \emph{absolute order}\index{absolute order},
    on $S_{n}$ as follows: For permutations
    $\xi,\psi\in S_{n}$ we declare
    $\xi\leq_{T}\psi$ if there is a geodesic
    (i.e., shortest possible) path
    in the Cayley graph connecting the identity 1 to $\psi$ and
    passing through $\xi$.

    Our largest generating set is:
    \[
      \Gamma_{n} :=
      \{\,\,{
        \sigma_{p}
      }\,\,|\,\,
        p\in\operatorname{NC}(n)
      \,\,\}
      \subseteq\cB_{n}
    \]
    which is in $1$-$1$ correspondence to the non-crossing partition lattice.
    Let $s_{p}$ denote the image of $\sigma_{p}$ in
    the symmetric group $S_{n}$. It turns out that
    the subset
    \(
      \{\,\,{ s_{p} }\,\,|\,\,p\in\operatorname{NC}(n)\,\,\}
      \subseteq
      S_{n}
    \)
    is the order ideal
    of the $n$-cycle
    \(
      1\mapsto2\mapsto\cdots\mapsto n\mapsto 1
    \)
    with respect to the partial order $\leq_{T}$ just defined, that is the subset consists of all elements in $S_{n}$ bounded above by the $n$-cycle.
    In fact,
    we have isomorphisms of various posets:
    \begin{fact}[see \cite{a4-c3-c13-Biane,a4-c3-c13-Brady2001}]\label{a4-c3-c13-set}
      Let $p,q\in\operatorname{NC}(n)$. Then the following
      are equivalent:
      \begin{enumerate}
        \item\label{a4-c3-c13-set-a}
          In $\operatorname{NC}(n)$, we have $p\preceq q$.
        \item\label{a4-c3-c13-set-b}
          In $\Gamma_{n}$, the element $\sigma_{p}$ is a
          left-divisor of $\sigma_{q}$, i.e., there exists
          $r\in\operatorname{NC}(n)$ such that
          \[
            \sigma_{q}=\sigma_{p}\sigma_{r}
          \]
        \item\label{a4-c3-c13-set-c}
          In $\Gamma_{n}$, the element $\sigma_{p}$ is a
          right-divisor of $\sigma_{q}$, i.e., there exists
          $r\in\operatorname{NC}(n)$ such that
          \[
            \sigma_{q}=\sigma_{r}\sigma_{p}
          \]
        \item\label{a4-c3-c13-set-d}
          In the braid group $\cB_{n}$, we have
          $\sigma_{p}\leq\sigma_{q}$.
        \item\label{a4-c3-c13-set-e}
          In the symmetric group $S_{n}$, we have
          $s_{p}\leq_{T}s_{q}$.
      \end{enumerate}
      Thus, on $\Gamma_{n}$ the three partial orderings given by
      left-divisibility, right-divisibility, and the partial order
      $\leq$ from $\cB_{n}$ coincide. Moreover,
      we have isomorphisms
      \[
        \operatorname{NC}(n)
        \cong
        \{\,\,{ \sigma_{p} }\,\,|\,\,p\in\operatorname{NC}(n)\,\,\}
        \cong
        \{\,\,{ s_{p} }\,\,|\,\,p\in\operatorname{NC}(n)\,\,\}
      \]
      of posets.
    \end{fact}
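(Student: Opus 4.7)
My plan is to use the projection $\pi\colon \cB_n \to S_n$ to reduce everything to the symmetric group, where the absolute order is controlled by cycle type, and to use the blockwise Kreweras complement from Observation~\ref{a4-c3-c13-intervall-structure} as the workhorse. I would organise the five conditions as a cycle \textsf{(a)}$\Rightarrow$\textsf{(b)}$\Rightarrow$\textsf{(d)}$\Rightarrow$\textsf{(e)}$\Rightarrow$\textsf{(a)}, and derive \textsf{(c)} from \textsf{(b)} by the symmetry between left and right divisibility (or equivalently via the Kreweras anti-automorphism of $\operatorname{NC}(n)$).

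The first technical input is the cycle structure of $s_p$: since distinct blocks of $p$ are disjoint, the block rotations $\varrho_B$ commute, and $s_p$ is a product of disjoint cycles whose lengths match the block sizes of $p$. A $k$-cycle has absolute length $k-1$, so $\ell_T(s_p) = n - \#\{\text{blocks of }p\} = \operatorname{rk}(p)$, and in particular $s_\top$ is the $n$-cycle $(1,2,\ldots,n)$ of absolute length $n-1$. This identifies rank in $\operatorname{NC}(n)$ with absolute length in $S_n$ along the image of the map $p\mapsto s_p$.

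For \textsf{(a)}$\Rightarrow$\textsf{(b)}, given $p\preceq q$ I apply Observation~\ref{a4-c3-c13-intervall-structure} to $[p,q]$: on each block $B$ of $q$ the restriction $p|_B$ is a non-crossing partition of $B$, and its Kreweras complement inside $\operatorname{NC}(B)$ satisfies $\sigma_{p|_B}\,\sigma_{(p|_B)^{\mathrm{c}}} = \varrho_B$ in the sub-braid group on $B$. Assembling these complements across the blocks of $q$ produces a single $r\in\operatorname{NC}(n)$ with $\sigma_q=\sigma_p\sigma_r$; placing the blockwise complements on the other side yields \textsf{(a)}$\Rightarrow$\textsf{(c)}. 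Implication \textsf{(b)}$\Rightarrow$\textsf{(d)} is immediate from the definition of $\leq$ on $\cB_n$, and \textsf{(d)}$\Rightarrow$\textsf{(e)} follows by applying $\pi$, since every element of $\cB^{*}_{n}$ projects to a product of transpositions, so $\sigma_p^{-1}\sigma_q\in\cB^{*}_{n}$ yields $s_p\leq_T s_q$.

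To close the loop, \textsf{(e)}$\Rightarrow$\textsf{(a)}, I would argue by induction on $\operatorname{rk}(s_q)-\operatorname{rk}(s_p)$: any geodesic transposition factorisation extending $s_p$ toward $s_q$ adds, at each step, a transposition that splits one cycle of the current permutation into two, and for the factorisation to remain geodesic this split must cut the corresponding cyclic block into two contiguous sub-arcs along the boundary circle, producing a covering relation in $\operatorname{NC}(n)$. The main obstacle, and the combinatorial heart of the argument, is precisely this geodesic/non-crossing rigidity: one must verify that \emph{every} chain $s_p\leq_T s_q$ is realised by a refinement chain of genuinely non-crossing partitions, equivalently that the identity $\sigma_q=\sigma_p\sigma_r$ constructed above is rank-additive with $\operatorname{rk}(p)+\operatorname{rk}(r)=\operatorname{rk}(q)$. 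Once this is in hand, the concluding poset isomorphisms are automatic, since $p\mapsto\sigma_p$ and $p\mapsto s_p$ are bijections of $\operatorname{NC}(n)$ onto $\Gamma_n$ and onto $\{\,s_p:p\in\operatorname{NC}(n)\,\}$ respectively, and all five orderings have been shown to coincide.
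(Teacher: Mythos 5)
The paper offers no proof of Fact~\ref{a4-c3-c13-set} --- it is quoted from Biane and Brady --- so your proposal can only be judged on its own terms. Its architecture (a cycle of implications, reduction to $S_{n}$ via $\pi$, blockwise Kreweras complements supplying the complementary divisors, and the identification $l_{T}(s_{p})=\operatorname{rk}(p)$ from the disjoint-cycle structure of $s_{p}$) is the standard and correct one. However, two steps are not actually carried out. First, your justification of (\ref{a4-c3-c13-set-d})$\Rightarrow$(\ref{a4-c3-c13-set-e}) is vacuous as written: \emph{every} permutation is a product of transpositions, so observing that $\pi(\sigma_{p}^{-1}\sigma_{q})$ is such a product gives nothing about the absolute order, which requires a \emph{geodesic} through $s_{p}$. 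The missing ingredient is a length count: each $\sigma_{ij}$ maps to $1$ under the abelianisation $\cB_{n}\to\ZZ$ and $\sigma_{p}$ maps to $\operatorname{rk}(p)$, so any positive word for $\sigma_{p}^{-1}\sigma_{q}$ has exactly $\operatorname{rk}(q)-\operatorname{rk}(p)=l_{T}(s_{q})-l_{T}(s_{p})$ letters; projecting then forces $l_{T}(s_{p})+l_{T}(s_{p}^{-1}s_{q})=l_{T}(s_{q})$, i.e.\ $s_{p}\leq_{T}s_{q}$.

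Second, for (\ref{a4-c3-c13-set-e})$\Rightarrow$(\ref{a4-c3-c13-set-a}) you name the ``geodesic/non-crossing rigidity'' as the combinatorial heart and then leave it unverified, so this implication is asserted rather than proved. Your sketch also has the mechanism reversed: since $l_{T}(w)=n-\#\{\text{cycles of }w\}$, walking a geodesic \emph{upward} from $s_{p}$ to $s_{q}$ each transposition must \emph{merge} two cycles; splitting a cycle decreases $l_{T}$. Because $p$ and $q$ are both assumed non-crossing, the clean way to close this gap is a support argument: take a geodesic factorisation $s_{q}=t_{1}\cdots t_{m}$ passing through $s_{p}$, and consider the graph on $[n]$ whose edges are the $t_{i}$. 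It has $m=n-\#\{\text{cycles of }s_{q}\}$ edges, hence at least $\#\{\text{cycles of }s_{q}\}$ components; since each component is a union of orbits of $s_{q}$, the components are exactly the orbits of $s_{q}$, so every $t_{i}$ --- and therefore every cycle of $s_{p}$ --- is supported inside a single block of $q$. That is precisely $p\preceq q$. The genuine non-crossing rigidity you worry about is needed only for the stronger statement, made in the paper just before the Fact, that $p\mapsto s_{p}$ surjects onto the full order ideal of the $n$-cycle; it is not required for the five equivalences themselves.
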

    \begin{figure}
      \begin{align*}
      \begin{array}{c}
      \raisebox{0pt}{\includegraphics{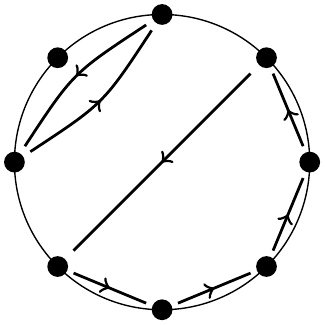}}
      \end{array}
      &=
      \begin{array}{c}
      \raisebox{0pt}{\includegraphics{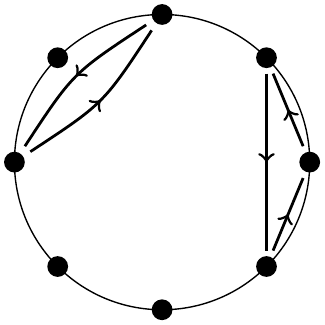}}
      \end{array}
      \circ
      \begin{array}{c}
      \raisebox{0pt}{\includegraphics{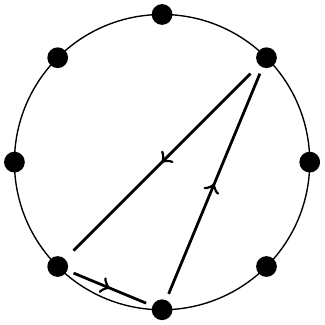}}
      \end{array}
      \\
      &=
      \begin{array}{c}
      \raisebox{0pt}{\includegraphics{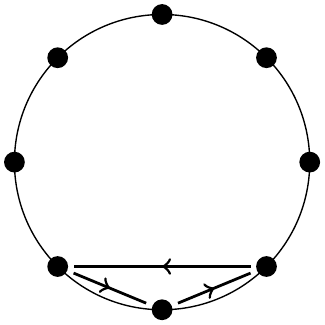}}
      \end{array}
      \circ
      \begin{array}{c}
      \raisebox{0pt}{\includegraphics{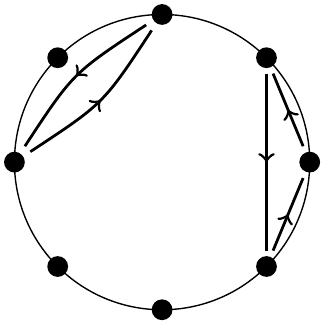}}
      \end{array}
      \end{align*}
      \caption{Left and right divisibility in $\Gamma_{8}$.\label{a4-c3-c13-division}}
    \end{figure}
    \begin{example}
      Consider the non-crossing partititions
      \[
        p :=
        \{\,
          \{\,1,2,8\,\},
          \{\,3,5\,\},
          \{\,4\,\},
          \{\,6\,\},
          \{\,7\,\}
        \,\}
        \quad\text{and}\quad
        q :=
        \{\,
          \{\,1,2,6,7,8\,\},
          \{\,3,5\,\},
          \{\,4\,\}
        \,\}
      \]
      in $\operatorname{NC}(8)$. Here, $p\leq q$ holds and we expect
      $\sigma_{p}$ to be a left- and right-divisor of $q$ within
      $\Gamma_{8}$. Figure~\ref{a4-c3-c13-division} shows the corresponding
      factorisations. One can interpret the complementary divisors as
      the blockwise Kreweras complements. In particular, the Kreweras complement
      yields factorisations of the maximal element in $\Gamma_{n}$.
    \end{example}

    The braid group $\cB_{n}$ has a particularly nice
    presentation over the generating set $\Gamma_{n}$:
    \begin{fact}[{\cite[Thm.~4.8]{a4-c3-c13-Brady2001}}]\label{a4-c3-c13-triangular-presentation}
      The valid equations
      \begin{equation}\label{a4-c3-c13-triangular-relations}
        \sigma_{1}\sigma_{2}=\sigma_{3}
        \text{\ for\ }
        \sigma_{1},\sigma_{2},\sigma_{3}\in\Gamma_{n}\setminus\{\,1\,\}
      \end{equation}
      are a defining set of triangular relations for the braid group
      \(
        \cB_{n}
      \)
      with respect to the generating set
      \(
        \Gamma_{n}\setminus\{\,1\,\}
      \).
    \end{fact}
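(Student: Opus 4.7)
The plan is to use the Artin presentation (Fact~\ref{a4-c3-c13-artin-presentation-braid-groups}) to verify that the triangular relations indeed suffice. Let $\tilde G$ denote the group presented by generators $\Gamma_{n}\setminus\{1\}$ subject to all valid triangular equations. Since these relations hold in $\cB_{n}$ by construction, and $\Gamma_{n}$ contains the Artin generators $\sigma_{1},\ldots,\sigma_{n-1}$ (as the partitions with a single non-trivial $2$-block $\{v_{i},v_{i+1}\}$), there is a surjective homomorphism $\varphi\colon\tilde G\to\cB_{n}$. To show $\varphi$ is an isomorphism, I would construct an inverse $\psi\colon\cB_{n}\to\tilde G$ by sending each Artin generator to the corresponding element of $\tilde G$; this extends to a homomorphism exactly when the Artin commutation and braid relations hold in~$\tilde G$.

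For the commutation relations with $|i-j|\ge 2$, consider the non-crossing partition $p=\{\{v_{i},v_{i+1}\},\{v_{j},v_{j+1}\}\}\cup\text{\emph{singletons}}$. By Fact~\ref{a4-c3-c13-set}, $\sigma_{i}$ is both a left- and a right-divisor of $\sigma_{p}$ in $\Gamma_{n}$, with complementary divisor $\sigma_{j}$ in either case, so both $\sigma_{i}\sigma_{j}=\sigma_{p}$ and $\sigma_{j}\sigma_{i}=\sigma_{p}$ appear as triangular relations; together they give the commutation $\sigma_{i}\sigma_{j}=\sigma_{j}\sigma_{i}$ in $\tilde G$.

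For the braid relations, fix the three-block partition $p=\{\{v_{i},v_{i+1},v_{i+2}\}\}\cup\text{\emph{singletons}}$ and the digon generator $\tau:=\sigma_{\{v_{i},v_{i+2}\}}\in\Gamma_{n}$. The order ideal below $p$ is isomorphic to $\operatorname{NC}(3)$ by Observation~\ref{a4-c3-c13-intervall-structure} and has three atoms; applying Fact~\ref{a4-c3-c13-set} together with the blockwise Kreweras complement to pair each atom with its complementary divisor, one reads off three triangular identities valid in $\cB_{n}$,
\[
\sigma_{i}\sigma_{i+1}=\sigma_{p},\qquad
\sigma_{i+1}\tau=\sigma_{p},\qquad
\tau\sigma_{i}=\sigma_{p},
\]
each of which is therefore imposed in $\tilde G$. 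Combining the first with the second yields $\tau=\sigma_{i+1}^{-1}\sigma_{i}\sigma_{i+1}$, while combining the first with the third yields $\tau=\sigma_{i}\sigma_{i+1}\sigma_{i}^{-1}$. Equating these two expressions for $\tau$ and multiplying by $\sigma_{i+1}$ on the left and $\sigma_{i}$ on the right gives the braid relation $\sigma_{i}\sigma_{i+1}\sigma_{i}=\sigma_{i+1}\sigma_{i}\sigma_{i+1}$ in $\tilde G$.

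The main obstacle is the bookkeeping of complementary factors: one must identify, for each rank-one partition below $p$, the correct element of $\Gamma_{n}$ realising the left- or right-factorisation of $\sigma_{p}$. This is precisely the content of the coincidence of left- and right-divisibility in Fact~\ref{a4-c3-c13-set}, encoded by the blockwise Kreweras complement. Once these three triangular identities are in hand, the derivation of the Artin relations is a formal manipulation in the group $\tilde G$, and the resulting inverse $\psi$ of $\varphi$ completes the proof.
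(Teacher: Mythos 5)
The paper states this as a Fact and offers no proof beyond the citation to Brady's Theorem~4.8, so your attempt can only be judged on its own terms. Your overall strategy is sound, and the two verifications you carry out are essentially correct: the commutation relations follow from the two triangular factorisations of $\sigma_{p}$ for the non-crossing partition with the two disjoint digon blocks, and the braid relation follows from the three factorisations of the $3$-cycle rotation through $\sigma_{i}$, $\sigma_{i+1}$ and the digon $\tau$. (Whether the correct identities are $\sigma_{i}\sigma_{i+1}=\sigma_{i+1}\tau=\tau\sigma_{i}$ or the reversed versions $\sigma_{i+1}\sigma_{i}=\tau\sigma_{i+1}=\sigma_{i}\tau$ depends on the orientation conventions; either triple feeds through your algebra to the same braid relation, so this is only the bookkeeping issue you already flag.)

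There is, however, a genuine gap at the end. What you have constructed is a homomorphism $\psi\colon\cB_{n}\to\tilde G$ with $\varphi\circ\psi=\mathrm{id}_{\cB_{n}}$; this only exhibits $\cB_{n}$ as a retract of $\tilde G$ and does not yet show that $\varphi$ is injective. For $\psi$ to be a two-sided inverse you must also prove $\psi\circ\varphi=\mathrm{id}_{\tilde G}$, which amounts to showing that the generators $g_{\sigma_{1}},\ldots,g_{\sigma_{n-1}}$ of $\tilde G$ corresponding to the adjacent digons already generate $\tilde G$, \emph{using only the triangular relations}. A priori $\tilde G$ could be much larger (e.g.\ a free product of $\cB_{n}$ with something) since it has one generator for each of the $C_{n}-1$ nontrivial non-crossing partitions. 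The missing argument has two steps, both internal to $\tilde G$: first, for an arbitrary $p$ choose a maximal chain $\bot\prec p_{1}\prec\cdots\prec p_{k}=p$ in $\operatorname{NC}(n)$; each covering step has, by Fact~\ref{a4-c3-c13-set}, a complementary right-divisor of rank one, i.e.\ a digon, so the corresponding triangular relations express $g_{p}$ as a product of digon generators $g_{\{v_{j},v_{k}\}}$. Second, induct on $k-j$ using the triangular relations attached to the triple $\{v_{j},v_{k-1},v_{k}\}$ to write each $g_{\{v_{j},v_{k}\}}$ as a product of adjacent-digon generators and their inverses (exactly the conjugation formulas you derive for $\tau$). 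Once this is added, $\tilde G$ is generated by the images of the Artin generators, $\psi$ is surjective, and your two maps are mutually inverse.
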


    Let $\tilde{\Gamma}_{n}$ be the Cayley graph of the braid
    group $\cB_{n}$ with respect to the generating
    set $\Gamma_{n}\setminus\{\,1\,\}$. A \emph{clique}
    in $\tilde{\Gamma}_{n}$ is a set of vertices that are pairwise connected via
    an edge. As a directed graph, $\tilde{\Gamma}_{n}$ does not have
    oriented cycles and each clique is totally ordered by the orientation of
    edges. Thus, a clique is of the form
    \[
      \{\,
        \beta,
        \beta\sigma_{{p_{1}}},
        \beta\sigma_{{p_{2}}},
        \ldots,
        \beta\sigma_{{p_{k}}}
      \,\}
    \]
    where
    \(
      p_{1}\prec p_{2}
      \prec\cdots\prec
      p_{k}
    \)
    is an ascending chain in $\operatorname{NC}(n)$,
    and $\beta \in \cB_{n}$ is some element.
    We denote by $\tilde{Y}_{n}$ the simplicial complex
    of cliques (also known as the \emph{flag complex} induced by the graph) in
    $\tilde{\Gamma}_{n}$. In particular, $\tilde{\Gamma}_{n}$ is
    the $1$-skeleton of $\tilde{Y}_{n}$.
    \begin{observation}\label{a4-c3-c13-dimension-of-brady-complex}
      All maximal chains in $\operatorname{NC}(n)$ have length $n$.
      Hence, all maximal simplices in $\tilde{Y}_{n}$ have
      dimension $n$.
    \end{observation}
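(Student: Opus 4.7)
The plan is to deduce both assertions from combinatorics already in place: the first is essentially a restatement from Section~1, and the second requires translating chain data for $\operatorname{NC}(n)$ into clique data for $\tilde{\Gamma}_n$.

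For the first assertion I would simply invoke the Fact at the end of Section~1: the non-crossing partition lattice is graded, with $\bot$ at rank $0$ and $\top$ at rank $n-1$, and every maximal chain passes through exactly one partition of each rank. This pins down the length of any maximal chain from $\bot$ to $\top$.

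For the second assertion my approach is to transfer this count to $\tilde{Y}_n$ via the clique description given in the paragraph immediately above the observation. After normalising by the left action of $\cB_n$ on $\tilde{\Gamma}_n$, which is vertex-transitive and clique-preserving, any clique takes the form $\{1,\sigma_{p_1},\ldots,\sigma_{p_k}\}$ with $\sigma_{p_i}\in\Gamma_n\setminus\{1\}$. The key input is Fact~\ref{a4-c3-c13-set}: the edge condition $\sigma_{p_i}^{-1}\sigma_{p_j}\in\Gamma_n\setminus\{1\}$ for $i<j$ is equivalent to the strict refinement $p_i\prec p_j$, since $\sigma_{p_i}^{-1}\sigma_{p_j}\in\Gamma_n$ means $\sigma_{p_i}$ left-divides $\sigma_{p_j}$. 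Hence maximal cliques correspond to maximal chains in $\operatorname{NC}(n)$, with the vertex $1$ playing the role of $\bot$.

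What remains is to read off the dimension of a maximal simplex from the length of a maximal chain computed in the first part. No real obstacle is expected here: the genuine content of the observation is packaged in the chain–clique identification supplied by Fact~\ref{a4-c3-c13-set}, and the rest is routine bookkeeping to align the conventions for the length of a chain, the number of vertices of a clique, and the dimension of the associated simplex.
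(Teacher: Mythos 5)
Your overall strategy is the right one, and it is essentially the only one available: the paper states this Observation without proof, and the intended justification is exactly the combination you describe, namely the gradedness of $\operatorname{NC}(n)$ from Section~1 together with the identification of cliques in $\tilde{\Gamma}_{n}$ with chains in $\operatorname{NC}(n)$ supplied by Fact~\ref{a4-c3-c13-set}. Your handling of the chain--clique dictionary is also correct, including the key point that $\sigma_{\bot}=1$, so that the vertex $\beta$ itself plays the role of the bottom element.

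The gap is precisely in the step you dismiss as routine bookkeeping with ``no real obstacle''. Carried out, the bookkeeping does not return the numbers claimed in the Observation. The Fact from Section~1 that you invoke says that maximal chains from $\bot$ to $\top$ have length $m-1$ in the edge-counting convention; equivalently, a maximal chain in $\operatorname{NC}(n)$ consists of exactly $n$ partitions, one of each rank $0,1,\ldots,n-1$. The corresponding maximal clique is $\{\beta\}\cup\{\,\beta\sigma_{p}\mid p\ \text{in the chain},\ p\neq\bot\,\}$, which has $1+(n-1)=n$ vertices, so the maximal simplices of $\tilde{Y}_{n}$ have dimension $n-1$, not $n$. (Sanity check for $n=2$: $\tilde{\Gamma}_{2}$ is the Cayley graph of $\cB_{2}\cong\ZZ$ with a single generator, $\tilde{Y}_{2}$ is a line, and its maximal simplices are edges of dimension $1=n-1$; this also matches $Y_{n}$ being a classifying space for $\cB_{n}$, whose cohomological dimension is $n-1$.) Under no consistent convention do both sentences of the Observation come out as written: if ``length'' counts elements, the first sentence holds but the second is off by one; if it counts steps, both are. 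So your proof cannot close as stated. You should either prove the corrected statement (maximal chains have $n$ elements, maximal simplices have dimension $n-1$) and flag that the Observation --- and the later maximal chain ``$1<\sigma_{1}<\cdots<\sigma_{n}$'' in the orthoscheme discussion --- is off by one, or exhibit explicitly a convention under which the claim is literally true; declaring the discrepancy to be harmless alignment of conventions is exactly where the argument fails.
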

    
    The most important fact about $\tilde{Y}_{n}$ is its contractibilty.
    \begin{theorem}[{\cite[Thm.~6.9 and Cor.~6.11]{a4-c3-c13-Brady2001}}]\label{a4-c3-c13-brady-main}
      The clique complex
      \(
        \tilde{Y}_{n}
      \)
      is contractible, and the braid group $\cB_{n}$ acts freely
      on it. Consequently, the orbit space
      \[
        Y_{n} :=
        \cB_{n}\setminus\tilde{Y}_{n}
      \]
      is a classifying space for the braid group $\cB_{n}$.\qed
    \end{theorem}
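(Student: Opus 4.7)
The plan is to establish the three conclusions in sequence: freeness of the $\cB_n$-action on $\tilde Y_n$, contractibility of $\tilde Y_n$, and the classifying-space statement, which follows formally from the first two.

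\textbf{Free action.} The braid group acts on its Cayley graph $\tilde\Gamma_n$ by left multiplication. This action is obviously free on vertices, and since left multiplication preserves both the labelling (by generators acting on the right) and the orientation of edges, it extends to a simplicial action on the flag complex $\tilde Y_n$ and preserves the total order induced on each clique by the edge-orientation. Any $\beta\in\cB_n$ that fixes a simplex setwise induces an order-preserving bijection of a finite totally ordered set, hence the identity permutation, so $\beta$ fixes each of its vertices and therefore equals $1$. This step is immediate.

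\textbf{Contractibility.} This is the main obstacle; my plan is to exploit the Garside-type structure described by Fact~\ref{a4-c3-c13-triangular-presentation} together with the lattice property of $\operatorname{NC}(n)$. For each $\beta\in\cB_n$, consider the subcomplex $\Delta_\beta\subseteq\tilde Y_n$ spanned by the vertices $\{\beta\sigma_p:p\in\operatorname{NC}(n)\}$. By Fact~\ref{a4-c3-c13-set}, cliques among these vertices correspond exactly to chains in $\operatorname{NC}(n)$, so $\Delta_\beta$ is isomorphic to the order complex of $\operatorname{NC}(n)$; since $\operatorname{NC}(n)$ has a maximum, this order complex is a cone and hence is contractible. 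Now $\tilde Y_n$ is covered by the family $\{\Delta_\beta\}_{\beta\in\cB_n}$, and I would conclude by a nerve-type argument: a non-empty intersection $\Delta_{\beta_1}\cap\cdots\cap\Delta_{\beta_k}$ is spanned by those $\gamma$ lying simultaneously in every interval $[\beta_i,\beta_i\sigma_\top]$ of the left-divisibility order, and by the lattice property of this order (the Garside-lattice property of the Birman--Ko--Lee monoid) the common vertex set is itself an interval; the product decomposition of intervals from Observation~\ref{a4-c3-c13-intervall-structure} then exhibits it as an order complex of a poset with a maximum, hence contractible. An alternative, more self-contained route is to construct a discrete Morse matching on $\tilde Y_n$ with a single critical vertex, using the Garside normal form on $\cB_n$ to pair each non-minimal simplex with a face or coface in an acyclic way. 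The genuinely delicate part is twofold: verifying that every non-empty intersection of the simplices $\Delta_\beta$ really carries the stated interval structure, and then either showing that the nerve of the cover is contractible, or verifying acyclicity of the Morse matching.

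\textbf{Classifying space.} With the first two points in hand, the remainder is automatic: the quotient map $\tilde Y_n\to Y_n$ is a covering map with deck transformation group $\cB_n$, so $\pi_1(Y_n)\cong\cB_n$; since $\tilde Y_n$ is contractible, all higher homotopy groups of $Y_n$ vanish, and $Y_n$ is a $K(\cB_n,1)$, a classifying space for $\cB_n$.
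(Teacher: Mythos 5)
Your first and third steps are fine: freeness of the left action on the Cayley graph, and hence on its flag complex, is immediate, and the passage from a free simplicial action on a contractible complex to the classifying-space conclusion is standard covering-space theory. Note, though, that the paper does not prove this theorem at all --- it is imported verbatim from Brady (Thm.\ 6.9 and Cor.\ 6.11 of the cited paper), so the only meaningful comparison is with Brady's argument, which is quite different from your plan: he contracts $\tilde{Y}_{n}$ directly, building it up along Garside normal-form length and using the lattice property of the interval $[1,\sigma_{\top}]$ to deformation-retract each stage onto the previous one.

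The genuine gap is in your contractibility step, and it sits exactly where you flag it. Two problems. First, your interval claims --- that the vertex set of $\Delta_{\beta}$ is the interval $[\beta,\beta\sigma_{\top}]$ in the prefix order and that a non-empty intersection of such intervals is again an interval with top and bottom --- require the full Garside package for the Birman--Ko--Lee monoid: that the prefix order on all of $\cB_{n}$ (not just on $\Gamma_{n}$) is a lattice order, and that the left divisors of $\sigma_{\top}$ are precisely the elements $\sigma_{p}$. This is true, but it is a substantial theorem and not a consequence of Fact~\ref{a4-c3-c13-set} or of the lattice property of $\operatorname{NC}(n)$ alone; it is essentially the content of the sections of Brady's paper preceding the theorem. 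Second, and more seriously, even granting all of that, the Nerve Theorem only yields a homotopy equivalence between $\tilde{Y}_{n}$ and the nerve of the cover $\{\Delta_{\beta}\}_{\beta\in\cB_{n}}$; you still owe a proof that this nerve --- an infinite complex on vertex set $\cB_{n}$ whose $1$-skeleton already contains the Cayley graph --- is contractible, and you give no argument for this. That statement is not visibly easier than contractibility of $\tilde{Y}_{n}$ itself, so the reduction as it stands does not lower the difficulty. The discrete Morse alternative you mention is closer in spirit to the actual proof, but ``pair each simplex with a face or coface using the normal form'' is the entire content of the theorem rather than a remark; without an explicit matching and a verification of acyclicity it is not yet a proof.
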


    \subsection{Higher generation by subgroups}
    \index{finiteness properties!higher generation by subgroups}
    For a subset
    \(
      I\subseteq
      \{\,
        1,\ldots,n
      \,\}
    \)
    let $\cB_{n}^{I}$ be the subgroup
    of $\cB_{n}=\pi_1(X_{n})$
    given by those paths, where the points in
    \(
      \{\,\,{v_{i}}\,\,|\,\,i\in I\,\,\}      
    \)
    do not move
    at all. For $k\in\{\,1,\ldots,n\,\}$, we put
    \(
      \cB_{n}^{k}
      :=
      \cB_{n}^{\{\,v_{k}\,\}}
    \),
    i.e., $\cB_{n}^{k}$ is the group of braids where
    the $k^{\text{th}}$ strand is rigid.
    It is, one might say, a group on $n-1$
    strands and one rod. However, since $v_{k}$ is a point on
    the boundary $\partial\, \mathbb{D}$, braiding with the rod is impossible.
    Thus, $\cB_{n}^{k}$ really is just an isomorphic
    copy of $\cB_{n-1}$ inside of $\cB_{n}$.
    Similarly,
    \(
      \cB_{n}^{I}
        =
        \bigcap_{{
          k\in I
        }}
          \cB_{n}^{k} 
    \)
    is isomorphic to
    $\cB_{n-\# I}$.

    Let $\operatorname{NC}^{k}(n)$ be the lattice of those
    non-crossing partitions in $\operatorname{NC}(n)$ where the singleton
    $\{\,k\,\}$ is a block. For a subset
    \(
      I\subseteq
      \{\,
        1,\ldots,n
      \,\}
    \),
    put
    \(
      \operatorname{NC}^{I}(n)
      :=
      \bigcap_{k\in I}
        \operatorname{NC}^{k}(n) 
    \).
    Then,
    \(
      \Gamma_{n}^{I}
      :=
      \{\,\,{
        \sigma_{p}
      }\,\,|\,\,
        p\in\operatorname{NC}^{I}(n)
      \,\,\}
    \)
    is a generating set for $\cB_{n}^{I}$.

    Note that the inclusion
    \(
      \cB_{n}^{I}
      \hookrightarrow
      \cB_{n}
    \)
    induces a bijection
    \(
      \Gamma_{n-\# I}
      \cong
      \Gamma_{n}^{I}
    \).
    Recall that $\Gamma_{n-\# I}$ is a poset
    with respect to divisibility. A priory, there are two poset structures
    on $\Gamma_{n}^{I}$: one from intrinsic divisibility with
    quotients again in $\Gamma_{n}^{I}$ and one induced
    from the ambient poset $\Gamma_{n}$, i.e., divisibility where
    quotients are allowed to be anywhere in $\Gamma_{n}$.
    However, since
    \(
      \Gamma_{n}^{I}
      =
      \Gamma_{n}
      \cap
      \cB_{n}^{I}
    \)%
    , the two
    poset structures coincide. Then,
    \(
      \Gamma_{n-\# I}
      \cong
      \Gamma_{n}^{I}
    \)
    is an isomorphism of posets.
    
    Moreover, the order preserving bijection
    \(
      \{\,1,\ldots,n-\# I\,\}
      \rightarrow
      \{\,1,\ldots,n\,\}\setminus I
    \)
    induces an isomorphism
    \(
      \operatorname{NC}(n-\# I)
      \cong
      \operatorname{NC}^{I}(n)
    \).
    This isomorphism is compatible with the poset isomorphism
    from Fact~\ref{a4-c3-c13-set}, and we have
    a commutative square of poset isomorphisms:
    \[
      \begin{CD}
        {\Gamma_{n-\# I}} @= {\Gamma_{n}^{I}} \\
        @|                                           @| \\
        {\operatorname{NC}(n-\# I)} @= {\operatorname{NC}^{I}(n)}
      \end{CD}
    \]

    The identity
    \(
      \Gamma_{n}^{I}
      =
      \Gamma_{n}
      \cap
      \cB_{n}^{I}
    \)
    has another consequence:
    \begin{observation}\label{a4-c3-c13-contractible-pieces}
      Let $\tilde{Y}_{n}^{I}$ be the full subcomplex
      spanned by $\cB_{n}^{I}$ as a set of vertices
      in $\tilde{Y}_{n}$. Then, $\tilde{Y}_{n}^{I}$
      is isomorphic to $\tilde{Y}_{n-\# I}$,
      whence it is contractible by Theorem~\ref{a4-c3-c13-brady-main}.
      For any
      coset $\beta\cB_{n}^{I}$, regarded
      as a set of vertices in $\tilde{Y}_{n}$, the full
      subcomplex spanned by
      $\beta\cB_{n}^{I}$
      is the translate $\beta\tilde{Y}_{n}^{I}$
      and also contractible.\qed
    \end{observation}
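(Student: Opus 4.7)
The plan is to exploit the fact that $\tilde Y_{n}$ is a flag complex: its simplicial structure is determined by its $1$-skeleton $\tilde\Gamma_{n}$. Consequently, the full subcomplex $\tilde Y_{n}^{I}$ of $\tilde Y_{n}$ spanned by a vertex subset $V\subseteq\cB_{n}$ is nothing other than the flag complex of the induced subgraph of $\tilde\Gamma_{n}$ on $V$. Taking $V=\cB_{n}^{I}$, an edge in $\tilde\Gamma_{n}$ from $\beta$ to $\beta\sigma_{p}$ lies in this induced subgraph precisely when $\beta$ and $\beta\sigma_{p}$ both belong to the subgroup $\cB_{n}^{I}$, which forces $\sigma_{p}\in\Gamma_{n}\cap\cB_{n}^{I}=\Gamma_{n}^{I}$. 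Hence the induced subgraph is exactly the Cayley graph of $\cB_{n}^{I}$ with respect to $\Gamma_{n}^{I}\setminus\{\,1\,\}$.

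Next, I would invoke the commutative square of poset isomorphisms displayed just before the observation, together with the group isomorphism $\cB_{n}^{I}\cong\cB_{n-\#I}$ coming from the fact that the strands indexed by $I$ sit rigidly on the boundary. Under this identification, the generating set $\Gamma_{n}^{I}\setminus\{\,1\,\}$ corresponds to $\Gamma_{n-\#I}\setminus\{\,1\,\}$, so the Cayley graph produced in the previous step is isomorphic to $\tilde\Gamma_{n-\#I}$. Passing to flag complexes yields the asserted simplicial isomorphism $\tilde Y_{n}^{I}\cong\tilde Y_{n-\#I}$, and contractibility is then immediate from Theorem~\ref{a4-c3-c13-brady-main} applied one dimension lower.

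For the coset statement, I would use the free $\cB_{n}$-action on $\tilde Y_{n}$ granted by Theorem~\ref{a4-c3-c13-brady-main}: left multiplication by any $\beta\in\cB_{n}$ is a simplicial automorphism of $\tilde Y_{n}$ permuting the vertex set $\cB_{n}$. In particular, it restricts to a bijection $\cB_{n}^{I}\to\beta\cB_{n}^{I}$ of vertex sets and, being an automorphism of $\tilde Y_{n}$, carries the full subcomplex on $\cB_{n}^{I}$ to the full subcomplex on $\beta\cB_{n}^{I}$. This identifies the latter with the translate $\beta\tilde Y_{n}^{I}$, which is simplicially isomorphic to $\tilde Y_{n}^{I}$ and hence contractible.

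The only step that requires genuine input beyond the general flag-complex formalism is the equality $\Gamma_{n}^{I}=\Gamma_{n}\cap\cB_{n}^{I}$, which is precisely the content of the paragraph preceding the observation; once one has this, the rest of the argument is a bookkeeping exercise pulling Theorem~\ref{a4-c3-c13-brady-main} back through the commutative square of poset isomorphisms, so I do not anticipate a real obstacle.
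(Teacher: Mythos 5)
Your argument is correct and is exactly the route the paper intends: the paper offers no written proof (the observation is stated with a \qed, flagged as an immediate consequence of the identity $\Gamma_{n}^{I}=\Gamma_{n}\cap\cB_{n}^{I}$ and the commutative square of poset isomorphisms), and your write-up simply supplies the flag-complex and Cayley-graph bookkeeping that the authors leave implicit. No gaps.
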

    \begin{observation}\label{a4-c3-c13-intersection-pattern}
      Assume that two coset complexes $\beta\tilde{Y}_{n}^{I}$
      and $\beta'\tilde{Y}_{n}^{J}$ intersect, say in
      $\bar{\beta}$. Then
      \(
        \beta\tilde{Y}_{n}^{I}
        =
        \bar{\beta}\tilde{Y}_{n}^{I}
      \)
      and
      \(
        \beta'\tilde{Y}_{n}^{J}
        =
        \bar{\beta}\tilde{Y}_{n}^{J}
      \).
      In this case, the intersection
      \[
        \bar{\beta}\tilde{Y}_{n}^{I}
        \cap
        \bar{\beta}\tilde{Y}_{n}^{J}
        =
        \bar{\beta}\tilde{Y}_{n}^{I\cup J}
      \]
      is contractible.
    \end{observation}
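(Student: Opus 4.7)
The plan is to decompose the observation into three independent assertions: the stabiliser identity $\beta\tilde{Y}_{n}^{I}=\bar{\beta}\tilde{Y}_{n}^{I}$, the intersection formula, and the contractibility of the intersection.

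For the stabiliser identity, I would argue as follows. Since $\bar{\beta}$ is a vertex of $\beta\tilde{Y}_{n}^{I}$, it lies in $\beta\cB_{n}^{I}$, so $\bar{\beta}=\beta\gamma$ with $\gamma\in\cB_{n}^{I}$. Because $\cB_{n}^{I}$ is a subgroup, $\gamma\cB_{n}^{I}=\cB_{n}^{I}$, hence $\bar{\beta}\cB_{n}^{I}=\beta\cB_{n}^{I}$. Since $\tilde{Y}_{n}^{I}$ is by definition the full subcomplex of $\tilde{Y}_{n}$ spanned by the vertices in $\cB_{n}^{I}$, and the left $\cB_{n}$-action on $\tilde{Y}_{n}$ is simplicial, the translates $\beta\tilde{Y}_{n}^{I}$ and $\bar{\beta}\tilde{Y}_{n}^{I}$ are the full subcomplexes on the same vertex set and therefore agree. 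The same argument applies to $\beta'\tilde{Y}_{n}^{J}$.

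For the intersection formula, I would first compare vertex sets and then use the fact that $\tilde{Y}_{n}$ is a flag complex (being the clique complex of $\tilde{\Gamma}_{n}$). A vertex in $\bar{\beta}\tilde{Y}_{n}^{I}\cap\bar{\beta}\tilde{Y}_{n}^{J}$ has the form $\bar{\beta}\gamma=\bar{\beta}\gamma'$ with $\gamma\in\cB_{n}^{I}$, $\gamma'\in\cB_{n}^{J}$, forcing $\gamma=\gamma'\in\cB_{n}^{I}\cap\cB_{n}^{J}=\cB_{n}^{I\cup J}$, where the last equality holds because fixing every $v_{i}$ with $i\in I$ and every $v_{j}$ with $j\in J$ is the same as fixing every point indexed by $I\cup J$. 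So the vertex set of the intersection is exactly $\bar{\beta}\cB_{n}^{I\cup J}$. Then, because $\tilde{Y}_{n}$ is flag, a simplex lies in both full subcomplexes $\bar{\beta}\tilde{Y}_{n}^{I}$ and $\bar{\beta}\tilde{Y}_{n}^{J}$ iff all its vertices do, which means the intersection coincides with the full subcomplex on $\bar{\beta}\cB_{n}^{I\cup J}$, namely $\bar{\beta}\tilde{Y}_{n}^{I\cup J}$.

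Finally, the contractibility is immediate from Observation~\ref{a4-c3-c13-contractible-pieces}: left-translation is a simplicial isomorphism, and $\tilde{Y}_{n}^{I\cup J}\cong\tilde{Y}_{n-\#(I\cup J)}$, which is contractible by Theorem~\ref{a4-c3-c13-brady-main}. The only mildly delicate point in the whole argument is the use of the flag property in the second step; without it, one could only conclude an equality of $1$-skeleta, not of full simplicial complexes. Since $\tilde{Y}_{n}$ was defined as a clique complex, this is automatic, so I do not expect any real obstacle.
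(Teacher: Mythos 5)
Your argument is correct; the paper states this Observation without proof (it is offered as evident), and your reasoning supplies exactly the routine verification being left to the reader: identify $\bar{\beta}\cB_{n}^{I}=\beta\cB_{n}^{I}$ from $\bar{\beta}\in\beta\cB_{n}^{I}$, compute the vertex set of the intersection via $\cB_{n}^{I}\cap\cB_{n}^{J}=\cB_{n}^{I\cup J}$, and invoke Observation~\ref{a4-c3-c13-contractible-pieces} for contractibility. One small remark: the flag property of $\tilde{Y}_{n}$ is not actually needed in your second step, since the intersection of two \emph{full} subcomplexes is automatically the full subcomplex on the intersection of their vertex sets --- a simplex lies in a full subcomplex if and only if all its vertices do, by the definition of fullness alone --- so the point you single out as delicate is in fact already covered by the fullness in Observation~\ref{a4-c3-c13-contractible-pieces}.
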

    Let $\cU:=(U_{\alpha})_{\alpha\in A}$ be a family
    of sets. For a subset $\sigma\subseteq A$ let
    \[
      U_{\sigma} :=
      \bigcap_{\alpha\in\sigma}U_{\alpha}
    \]
    denote the associated intersection.
    The simplicial complex
    \[
      N(\cU)
      :=
      \{\,\, \sigma\subseteq A \,\,|\,\,
        \varnothing \neq U_{\sigma}
      \,\,\}
    \]
    of all index sets whose associated intersection is non-empty is
    called the \emph{nerve} of the family $\cU$. If $\cU$
    is a family of subcomplexes in a CW complex, one has the following:
    \begin{theorem}[{Nerve Theorem, see \cite[Cor.~4G.3]{a4-c3-c13-Hatcher:AlgTop}}]
      \label{a4-c3-c13-nerve-cover}
      \index{theorem!nerve theorem}
      Suppose $\cU=(U_{\alpha})_{\alpha\in A}$
      is a covering of a simplicial complex $X$ by a family of
      contractible subcomplexes.
      Suppose further that, for each $\sigma\in N(\cU)$, the
      intersection $U_{\sigma}$ is contractible. Then, the nerve
      $N(\cU)$ is homotopy equivalent to $X$.
    \end{theorem}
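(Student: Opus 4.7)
The plan is to introduce an auxiliary ``homotopy colimit'' space $M$ equipped with natural projections to both $X$ and $|N(\cU)|$, and to show that each of these projections is a homotopy equivalence; the theorem then follows by composing one with a homotopy inverse of the other.

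First I would construct $M$ as follows. For every nonempty $\sigma\in N(\cU)$, form the product $U_{\sigma}\times|\sigma|$, where $|\sigma|$ denotes the geometric simplex on the vertex set $\sigma$. Glue these pieces along the face inclusions: whenever $\tau\subseteq\sigma$, identify $U_{\sigma}\times|\tau|$ with its image under the inclusion $U_{\sigma}\hookrightarrow U_{\tau}$. The resulting space $M$ admits a projection $p_{N}\colon M\to|N(\cU)|$, collapsing each $U_{\sigma}$-factor to a point, and a projection $p_{X}\colon M\to X$, induced by the inclusions $U_{\sigma}\hookrightarrow X$.

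Next I would show that $p_{N}$ is a homotopy equivalence by induction over the skeleta of $|N(\cU)|$. Attaching a simplex $|\sigma|$ to the previous skeleton corresponds, upstairs, to attaching $U_{\sigma}\times|\sigma|$ along $U_{\sigma}\times\partial|\sigma|$. Since $U_{\sigma}$ is contractible by hypothesis, the induction step is provided by the gluing lemma for cofibrations applied to the evident pushout square. An analogous argument handles $p_{X}$: for $x\in X$, set $A_{x}:=\{\,\alpha\in A:x\in U_{\alpha}\,\}$; since $\cU$ covers $X$, this set is nonempty, and since every finite subset of $A_{x}$ has intersection containing $x$, the full subcomplex of $N(\cU)$ spanned by $A_{x}$ is a simplex, hence contractible. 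After choosing a CW refinement of $X$ on whose open cells the function $x\mapsto A_{x}$ is locally constant, the same skeletal-induction/pushout argument gives that $p_{X}$ too is a homotopy equivalence.

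The main obstacle is making the two gluing inductions rigorous. In both cases one has to check that the relevant inclusions are cofibrations so that the gluing lemma (``a pushout of homotopy equivalences along cofibrations is a homotopy equivalence'') applies; and for $p_{X}$ one also needs to produce a CW refinement of $X$ subordinate to $\cU$, which is routine when $X$ is a simplicial complex covered by subcomplexes but must be spelled out. Once these technicalities are in place, the intuitive ``contractible preimages imply homotopy equivalence'' slogan becomes a theorem and the nerve theorem follows.
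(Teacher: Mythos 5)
Your proposal is correct: it is the standard homotopy-colimit (``double mapping cylinder'' over the nerve) proof, and it coincides with the argument in the source the paper cites for this statement (Hatcher, Cor.~4G.3, via Prop.~4G.2); the paper itself offers no proof of its own. The only superfluous step is the ``CW refinement'' for $p_{X}$ --- since the $U_{\alpha}$ are subcomplexes, the function $x\mapsto A_{x}$ is already constant on open simplices of $X$, so the skeletal induction applies directly.
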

    According to Observation~\ref{a4-c3-c13-intersection-pattern}, the Nerve Theorem
    applies in particular to the union:
    \[
      \tilde{X}_{n}
      :=
      \bigcup_{k}
        \bigcup_{{
          \beta\in\cB_{n}
        }}
          \beta\tilde{Y}_{n}^{k}          
    \]
    We deduce:
    \begin{proposition}\label{a4-c3-c13-BradySubcomplex}
      The complex $\tilde{X}_{n}$ is homotopy equivalent
      to the nerve $N$ of the family
      \[
        \{\,\,{ \beta\cB_{n}^{k} }\,\,|\,\,
          \beta\in\cB_{n},\,
          1\leqslant k\leqslant n
        \,\,\}
      \]
      of cosets.\qed
    \end{proposition}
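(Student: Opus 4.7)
The plan is to apply the Nerve Theorem (Theorem~\ref{a4-c3-c13-nerve-cover}) to the family
\[
  \cU \;:=\; \bigl(\,\beta\tilde{Y}_{n}^{k}\,\bigr)_{\beta\in\cB_{n},\,1\leqslant k\leqslant n}
\]
of subcomplexes, whose union is $\tilde{X}_{n}$ by construction. Once this is done, it remains only to identify the abstract nerve $N(\cU)$ with the nerve of the coset family $\{\,\beta\cB_{n}^{k}\,\}$ appearing in the statement.

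First, I would check the hypotheses of the Nerve Theorem. Each member $\beta\tilde{Y}_{n}^{k}$ of $\cU$ is contractible by Observation~\ref{a4-c3-c13-contractible-pieces}. For a finite subfamily indexed by pairs $(\beta_{1},k_{1}),\ldots,(\beta_{m},k_{m})$, an inductive application of Observation~\ref{a4-c3-c13-intersection-pattern} shows that if the intersection
\(
  \beta_{1}\tilde{Y}_{n}^{k_{1}}\cap\cdots\cap\beta_{m}\tilde{Y}_{n}^{k_{m}}
\)
is non-empty, with $\bar\beta$ a common vertex, then it coincides with $\bar\beta\tilde{Y}_{n}^{\{k_{1},\ldots,k_{m}\}}$, which is again contractible by Observation~\ref{a4-c3-c13-contractible-pieces}. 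Thus all non-empty multiple intersections are contractible, and the Nerve Theorem yields $\tilde{X}_{n}\simeq N(\cU)$.

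The remaining step is an identification of simplicial complexes. The vertex set of the full subcomplex $\beta\tilde{Y}_{n}^{k}$ is precisely the coset $\beta\cB_{n}^{k}$, and $\beta\tilde{Y}_{n}^{k}$ is full in $\tilde{Y}_{n}$. Hence a finite subfamily
\(
  \beta_{1}\tilde{Y}_{n}^{k_{1}},\ldots,\beta_{m}\tilde{Y}_{n}^{k_{m}}
\)
has non-empty intersection if and only if the underlying cosets $\beta_{1}\cB_{n}^{k_{1}},\ldots,\beta_{m}\cB_{n}^{k_{m}}$ share a common element (the ``if'' direction is immediate, and the ``only if'' direction follows because any point of the intersection lies in a simplex whose vertices lie in every $\beta_{i}\tilde{Y}_{n}^{k_{i}}$, and such a vertex is a common element of the cosets). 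This bijection between the indexing sets identifies $N(\cU)$ with the nerve $N$ of the coset family, concluding the argument.

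The substantive content is entirely packed into the two preceding observations and Brady's Theorem~\ref{a4-c3-c13-brady-main}; the main obstacle, if one can call it that, is bookkeeping the identification between subcomplex-intersections and coset-intersections, but this is essentially tautological given that each $\beta\tilde{Y}_{n}^{I}$ is the \emph{full} subcomplex on its vertex set $\beta\cB_{n}^{I}$.
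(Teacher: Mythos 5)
Your proposal is correct and follows exactly the route the paper intends: the paper derives Proposition~\ref{a4-c3-c13-BradySubcomplex} immediately from the Nerve Theorem together with Observations~\ref{a4-c3-c13-contractible-pieces} and~\ref{a4-c3-c13-intersection-pattern}, leaving the proof as a \qed. You merely fill in the (correct) details that the paper suppresses, namely the inductive extension of Observation~\ref{a4-c3-c13-intersection-pattern} to multiple intersections and the identification of the nerve of the subcomplex family with the coset nerve via fullness of the subcomplexes $\beta\tilde{Y}_{n}^{k}$.
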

    This relates to higher generation by subgroups as defined by
    Abels and Holz.
    \begin{definition}[{\cite[2.1]{a4-c3-c13-Abels.Holz}}]
      Let $G$ be a group and let $\mathfrak{H}$ be a family
      of subgroups. We say that $\mathfrak{H}$ is
      \emph{$m$-generating} for $G$ if the
      \emph{coset nerve}
      \[
        N_{G}(\mathfrak{H})
        :=
        N(
          \{\,\,gH\,\,|\,\,
            g\in G,\,
            H\in\mathfrak{H}
          \,\,\}
        )
      \]
      is $(m-1)$-connected.
    \end{definition}
    From Proposition~\ref{a4-c3-c13-BradySubcomplex}, we conclude immediately:
    \begin{corollary}\label{a4-c3-c13-first-generation-criterion}
      The family
      \(
        \mathfrak{B}_{n}
        :=
        \{\,
          \cB_{n}^{1},\ldots,
          \cB_{n}^{n}
        \,\}
      \)
      is $m$-generating for the braid group
      \(
        \cB_{n}
      \)
      if and only if
      $\tilde{X}_{n}$
      is $(m-1)$-connected.\qed
    \end{corollary}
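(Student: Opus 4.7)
The plan is to deduce this corollary essentially by unwinding the relevant definitions and invoking Proposition~\ref{a4-c3-c13-BradySubcomplex}. By the definition of Abels and Holz, the family $\mathfrak{B}_n = \{\cB_n^1,\ldots,\cB_n^n\}$ is $m$-generating for $\cB_n$ precisely when the coset nerve
\[
  N_{\cB_n}(\mathfrak{B}_n)
  =
  N(\{\,\beta\cB_n^k \mid \beta\in\cB_n,\,1\le k\le n\,\})
\]
is $(m-1)$-connected. But this is exactly the nerve $N$ appearing in Proposition~\ref{a4-c3-c13-BradySubcomplex}.

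Next, I would invoke Proposition~\ref{a4-c3-c13-BradySubcomplex}, which yields a homotopy equivalence $\tilde{X}_n \simeq N_{\cB_n}(\mathfrak{B}_n)$. Since $(m-1)$-connectedness is a homotopy invariant, the nerve is $(m-1)$-connected if and only if $\tilde{X}_n$ is. Combining the two equivalences gives the statement.

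The only subtlety worth checking is that the family of cosets used in Proposition~\ref{a4-c3-c13-BradySubcomplex} is literally the same family indexing $N_{\cB_n}(\mathfrak{B}_n)$; this is immediate, since both range over all $\beta\in\cB_n$ and $k\in\{1,\ldots,n\}$. There is no genuine obstacle here—once Proposition~\ref{a4-c3-c13-BradySubcomplex} is established (the real content, relying on Observations~\ref{a4-c3-c13-contractible-pieces} and \ref{a4-c3-c13-intersection-pattern} together with the Nerve Theorem~\ref{a4-c3-c13-nerve-cover}), the corollary is a one-line translation. Accordingly my written proof will be quite short, just matching the definition of $m$-generation against the homotopy equivalence supplied by the proposition.
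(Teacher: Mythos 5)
Your proposal is correct and matches the paper exactly: the corollary is stated there with an immediate \qed following the remark ``From Proposition~\ref{a4-c3-c13-BradySubcomplex}, we conclude immediately,'' which is precisely your one-line translation of the definition of $m$-generation through the homotopy equivalence $\tilde{X}_{n}\simeq N$. Nothing further is needed.
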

    Recall that $\cB_{n}$ acts freely on
    the simplicial complex $\tilde{Y}_{n}$. The projection
    $\tilde{Y}_{n}\rightarrow Y_{n}$ is
    a covering space map. In fact, $\tilde{Y}_{n}$ is
    the universal cover of $Y_{n}$ and the
    braid group $\cB_{n}$ acts as the group of
    deck transformations. The subcomplex $\tilde{X}_{n}$
    is $\cB_{n}$-invariant. Let $X_{n}$
    be its image in $Y_{n}$.
    \begin{proposition}\label{a4-c3-c13-second-generation-criterion}
      The family
      \(
        \mathfrak{B}_{n}
        :=
        \{\,
          \cB_{n}^{1},\ldots,
          \cB_{n}^{n}
        \,\}
      \)
      is $m$-generating for the braid group
      \(
        \cB_{n}
      \)
      if and only if the
      pair $(Y_{n},X_{n})$
      is $m$-connected.
    \end{proposition}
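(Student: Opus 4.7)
My plan is to combine Corollary~\ref{a4-c3-c13-first-generation-criterion} with a transfer of relative homotopy groups between the universal cover and the base. That corollary already tells us that $\mathfrak{B}_n$ is $m$-generating for $\cB_n$ exactly when $\tilde X_n$ is $(m-1)$-connected, so it suffices to show
\[
  \tilde X_n \text{ is } (m-1)\text{-connected}
  \iff
  (Y_n, X_n) \text{ is } m\text{-connected}.
\]
To start, I would note that $\tilde X_n$ is $\cB_n$-invariant with image $X_n$ in the quotient, hence coincides with the full preimage $p^{-1}(X_n)$ under the universal covering $p\colon \tilde Y_n \to Y_n$.

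For degrees $i \geq 2$ I would invoke the standard fact that the covering map $p$ induces isomorphisms $p_{*}\colon \pi_i(\tilde Y_n, \tilde X_n) \xrightarrow{\cong} \pi_i(Y_n, X_n)$. Using the contractibility of $\tilde Y_n$ from Theorem~\ref{a4-c3-c13-brady-main}, the long exact sequence of the pair $(\tilde Y_n, \tilde X_n)$ collapses to $\pi_i(\tilde Y_n, \tilde X_n) \cong \pi_{i-1}(\tilde X_n)$ for $i \geq 2$. Composing these two identifications, $\pi_i(Y_n, X_n) = 0$ for $2 \leq i \leq m$ is equivalent to $\pi_j(\tilde X_n) = 0$ for $1 \leq j \leq m-1$.

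It remains to match the low-dimensional conditions. The vanishing of $\pi_0(Y_n, X_n)$ reduces to $\tilde X_n \neq \varnothing$, since $Y_n$ is connected. For the vanishing of $\pi_1(Y_n, X_n)$, I would appeal to the classical correspondence between components of $p^{-1}(X_n)$ and cosets of the image of $\pi_1(X_n) \to \pi_1(Y_n) = \cB_n$: that image is all of $\cB_n$ precisely when $\tilde X_n$ is path-connected, and the long exact sequence of $(Y_n, X_n)$ translates this surjectivity into $\pi_1(Y_n, X_n) = 0$. Assembling the three cases yields the desired equivalence. The main obstacle I foresee is the bookkeeping in dimensions $i = 0, 1$ (in particular the component count for $p^{-1}(X_n)$ needed for $\pi_1$); the higher degrees are a direct formal consequence of the long exact sequence together with the contractibility of $\tilde Y_n$.
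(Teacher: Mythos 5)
Your plan is correct and follows essentially the same route as the paper: both arguments reduce to Corollary~\ref{a4-c3-c13-first-generation-criterion}, use the contractibility of $\tilde{Y}_{n}$ to turn the long exact sequence of the pair $(\tilde{Y}_{n},\tilde{X}_{n})$ into isomorphisms $\pi_{d+1}(\tilde{Y}_{n},\tilde{X}_{n})\cong\pi_{d}(\tilde{X}_{n})$, and identify $\pi_{d+1}(Y_{n},X_{n})$ with $\pi_{d+1}(\tilde{Y}_{n},\tilde{X}_{n})$ via unique lifting along the covering, using that $\tilde{X}_{n}$ is the full preimage of $X_{n}$. The only cosmetic difference is that the paper runs the lifting argument uniformly in all degrees $d\geq 0$, so the separate low-dimensional bookkeeping you flag as the main obstacle is absorbed into the same mechanism rather than treated by hand.
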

    \begin{proof}
      First, consider the long exact sequence of homotopy groups for the
      inclusion $\tilde{X}_{n}\leq\tilde{Y}_{n}$:
      \[
        \cdots\longrightarrow
        \pi_{1}(\tilde{X}_{n}) \longrightarrow
        \pi_{1}(\tilde{Y}_{n}) \longrightarrow
        \pi_{1}(\tilde{Y}_{n},\tilde{X}_{n})
        \longrightarrow
        \pi_{0}(\tilde{X}_{n}) \longrightarrow
        \pi_{0}(\tilde{Y}_{n})
      \]
      Since $\tilde{Y}_{n}$ is contractible, we obtain isomorphisms:
      \[
        \pi_{d+1}(\tilde{Y}_{n},\tilde{X}_{n})
        \cong
        \pi_{d}(\tilde{X}_{n})
      \]
      On the other hand, $\tilde{Y}_{n}\rightarrow Y_{n}$
      is a covering space projection and therefore enjoys the homotopy lifting property.
      Moreover, $\tilde{X}_{n}$ is the full preimage of
      $X_{n}$. Therefore any map
      \[
        \left(
          \mathbb{B}^{d+1},
          \SSS^{d},
          * \right)
        \longrightarrow
        \left(
          Y_{n},
          X_{n},
          1 \right)
      \]
      lifts uniquely to a map
      \[
        \left(
          \mathbb{B}^{d+1},
          \SSS^{d},
          * \right)
        \longrightarrow
        \left(
          \tilde{Y}_{n},
          \tilde{X}_{n},
          1 \right)
      \]
      inducing a map
      \[
        \pi_{d+1}(
          Y_{n},
          X_{n}
        )
        \longrightarrow
        \pi_{d+1}(
          \tilde{Y}_{n},
          \tilde{X}_{n}
        )        
      \]
      which is inverse to the map
      \[
        \pi_{d+1}(
          \tilde{Y}_{n},
          \tilde{X}_{n}
        )
        \longrightarrow
        \pi_{d+1}(
          Y_{n},
          X_{n}
        )        
      \]
      coming from the covering space projection. Thus, we have isomorphisms
      \[
        \pi_{d+1}(
          Y_{n},
          X_{n}
        )
        \cong
        \pi_{d+1}(\tilde{Y}_{n},\tilde{X}_{n})
        \cong
        \pi_{d}(\tilde{X}_{n})
      \]
      and the claim follows from Corollary~\ref{a4-c3-c13-first-generation-criterion}.
    \end{proof}

    We can detect $1$-generating and $2$-generating families by hand.
    \begin{remark}\label{a4-c3-c13-small-gen}
      For $n\geqslant3 $, the family $\mathfrak{B}_{n}$ is
      $1$-generating for $\cB_{n}$, and for
      $n\geqslant 4$, it is $2$-generating.
    \end{remark}
    \begin{proof}
      A family $\mathfrak{H}$ is
      $1$-generating for $G$ if and only if
      $\bigcup_{H\in\mathfrak{H}}H$ generates
      $G$. It is $2$-generating for $G$ if
      $G$ is the product of the $H\in\mathfrak{H}$
      amalgamated along their intersections \cite[2.4]{a4-c3-c13-Abels.Holz}.

      Note that the braid group $\cB_{n}$ is generated
      by counter-clockwise rotations
      \[
        \beta_{ij}
        :=
        v_{i} \mapsto v_{j}
        \mapsto v_{i}
      \]
      around digons. Thus,
      \(
        \mathfrak{B}_{n}
        :=
        \{\,
          \cB_{n}^{1},\ldots,
          \cB_{n}^{n}
        \,\}
      \)
      generates as long as $n\geqslant 3$ since then each digon-generator
      is contained in some $\cB_{n}^{k}$.

      Considering the digon-generators for $\cB_{n}$,
      defining relations are given by braid relations, visible in
      isomorphic copies of $\cB_{3}$ inside $\cB_{n}$,
      and commutator relations, visible in isomorphic copies of
      $\cB_{4}$ inside $\cB_{n}$. Hence all necessary
      defining relations are visible in the amalgamated product of the
      $\cB_{n}^{k}\cong\cB_{n-1}$
      provided $n\geqslant 5$.

      For $n=4$, the challenge is to derive the commutator relations:
      \[
        \beta_{12}\beta_{34}=\beta_{34}\beta_{12}
        \qquad\text{and}\qquad
        \beta_{23}\beta_{41}=\beta_{41}\beta_{23}
      \]
      We do the first, the second is done analogously. Calculating with only
      three strands at a time, we find:
      \begin{align*}
        \beta_{12}\beta_{34}\beta_{24}
        &=
        \beta_{12}\beta_{23}\beta_{34}
        =
        \beta_{23}\beta_{13}\beta_{34}
        =
        \\
        &=
        \beta_{23}\beta_{34}\beta_{14}
        =
        \beta_{34}\beta_{24}\beta_{14}
        =
        \beta_{34}\beta_{12}\beta_{24}
      \end{align*}
      The desired commutator relation follows.
    \end{proof}
    \begin{remark}
      The little computation at the end of the preceeding proof shows
      that the commutator relations are redundant in the braid group
      presentation given in \cite[Lem.~4.2]{a4-c3-c13-Brady2001}. Accordingly,
      they are also redundant in the analoguous presentation from
      \cite[Prop.~2.1]{a4-c3-c13-BKL}.
    \end{remark}
      
    \begin{theorem}\label{a4-c3-c13-criterion-acyclic}
      For $n\geqslant 4$, the family
      $\mathfrak{B}_{n}$ is $m$-generating for
      $\cB_{n}$ if and only if the
      homology groups
      \(
        \operatorname{H}_{d}(
          Y_{n},X_{n}
        )
      \)
      are trivial for $1\leqslant d\leqslant m$.
    \end{theorem}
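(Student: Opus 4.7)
Plan. The plan is to invoke Proposition~\ref{a4-c3-c13-second-generation-criterion}, reducing the claim to showing that, for $n\geqslant 4$, the pair $(Y_{n},X_{n})$ is $m$-connected if and only if $H_{d}(Y_{n},X_{n})=0$ for $1\leqslant d\leqslant m$. The forward direction is the standard consequence of cellular approximation: an $m$-connected CW pair can be deformed rel $X_{n}$ so that all relative cells lie in dimensions strictly greater than $m$, and thus the relative integer homology vanishes through degree $m$. The converse I would prove by induction on $m\geqslant 2$, with the base case $m=2$ supplied by Remark~\ref{a4-c3-c13-small-gen}, which gives $2$-generating unconditionally for $n\geqslant 4$.

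For the inductive step I would assume the theorem for $m-1$ and take $H_{d}(Y_{n},X_{n})=0$ for $1\leqslant d\leqslant m$ with $m\geqslant 3$. By induction the pair is $(m-1)$-connected, so $\pi_{1}(X_{n})\cong\pi_{1}(Y_{n})=\cB_{n}$, and the preimage $\tilde{X}_{n}\subseteq\tilde{Y}_{n}$ is the universal cover of $X_{n}$, itself $(m-2)$-connected and (since $m\geqslant 3$) simply connected. The isomorphisms
\[
  \pi_{m}(Y_{n},X_{n})\;\cong\;\pi_{m-1}(\tilde{X}_{n})\;\cong\;H_{m-1}(\tilde{X}_{n})\;\cong\;H_{m}(\tilde{Y}_{n},\tilde{X}_{n}),
\]
coming respectively from the lifting argument in the proof of Proposition~\ref{a4-c3-c13-second-generation-criterion}, absolute Hurewicz on the simply connected space $\tilde{X}_{n}$, and the long exact sequence of the pair together with the contractibility of $\tilde{Y}_{n}$ (Theorem~\ref{a4-c3-c13-brady-main}), reduce the task to showing $H_{m}(\tilde{Y}_{n},\tilde{X}_{n})=0$.

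Here lies the main obstacle. The Cartan--Leray spectral sequence $E^{2}_{p,q}=H_{p}(\cB_{n};H_{q}(\tilde{Y}_{n},\tilde{X}_{n}))\Rightarrow H_{p+q}(Y_{n},X_{n})$, combined with the inductively established vanishing $H_{q}(\tilde{Y}_{n},\tilde{X}_{n})=0$ for $q\leqslant m-1$ (obtained from the $(m-2)$-connectedness of $\tilde{X}_{n}$ and the long exact sequence), collapses on the total-degree-$m$ diagonal and yields $H_{m}(Y_{n},X_{n})\cong H_{m}(\tilde{Y}_{n},\tilde{X}_{n})_{\cB_{n}}$. The hypothesis thus forces only the $\cB_{n}$-coinvariants of $H_{m}(\tilde{Y}_{n},\tilde{X}_{n})$ to vanish, and upgrading this to vanishing of the whole $\mathbb{Z}[\cB_{n}]$-module is the delicate point. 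I would address it by applying the relative Hurewicz theorem with local $\mathbb{Z}[\cB_{n}]$-coefficients, which identifies $\pi_{m}(Y_{n},X_{n})$ directly with $H_{m}(\tilde{Y}_{n},\tilde{X}_{n})$ as $\mathbb{Z}[\cB_{n}]$-modules for any $(m-1)$-connected pair, together with a chain-level argument exploiting the free $\mathbb{Z}[\cB_{n}]$-structure on $C_{*}(\tilde{Y}_{n},\tilde{X}_{n})$ and the torsion-freeness of $\cB_{n}$. This final upgrade from integral to local homology vanishing is the technical heart of the proof.
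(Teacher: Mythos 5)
Your skeleton is the same as the paper's: reduce to Proposition~\ref{a4-c3-c13-second-generation-criterion}, get the base case from Remark~\ref{a4-c3-c13-small-gen}, and convert between connectivity and acyclicity via the relative Hurewicz theorem. The forward direction and the chain of identifications $\pi_{m}(Y_{n},X_{n})\cong\pi_{m}(\tilde{Y}_{n},\tilde{X}_{n})\cong H_{m}(\tilde{Y}_{n},\tilde{X}_{n})$ are all fine. The problem is the step you yourself single out as ``the technical heart'': passing from $H_{m}(Y_{n},X_{n})=0$, which via Cartan--Leray gives only the vanishing of the coinvariants $H_{m}(\tilde{Y}_{n},\tilde{X}_{n})_{\cB_{n}}$, to the vanishing of $H_{m}(\tilde{Y}_{n},\tilde{X}_{n})$ itself. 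You assert this follows from relative Hurewicz with local coefficients plus ``the free $\mathbb{Z}[\cB_{n}]$-structure on $C_{*}(\tilde{Y}_{n},\tilde{X}_{n})$ and the torsion-freeness of $\cB_{n}$.'' No such implication exists. The local-coefficient Hurewicz theorem only re-proves the identification $\pi_{m}(Y_{n},X_{n})\cong H_{m}(\tilde{Y}_{n},\tilde{X}_{n})$ that you already have; it says nothing about the integral homology of the quotient pair. And vanishing of coinvariants does not force vanishing of the module, even for homology of a finite free complex over a torsion-free group: take $G=\mathbb{Z}=\langle t\rangle$ and the complex $0\to\mathbb{Z}[t,t^{-1}]\xrightarrow{\,t-2\,}\mathbb{Z}[t,t^{-1}]\to 0$; its homology is $\mathbb{Z}[1/2]\neq 0$ in degree zero, while the tensored-down complex $0\to\mathbb{Z}\xrightarrow{-1}\mathbb{Z}\to 0$ is acyclic. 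So freeness of the chains and torsion-freeness of the group buy you nothing here, and the converse direction of your proof is not established.

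For comparison, the paper's own proof is a two-line affair: it notes that for $n\geqslant 4$ the pair $(Y_{n},X_{n})$ is $1$-connected (in fact $2$-connected, by Remark~\ref{a4-c3-c13-small-gen} and Proposition~\ref{a4-c3-c13-second-generation-criterion}) and then invokes the relative Hurewicz theorem to declare $m$-connectivity equivalent to $m$-acyclicity. Since $X_{n}$ is not simply connected (its fundamental group is all of $\cB_{n}$ once the pair is $2$-connected), the applicable form of that theorem identifies $H_{m}(Y_{n},X_{n})$ with the quotient of $\pi_{m}(Y_{n},X_{n})$ by the $\pi_{1}(X_{n})$-action --- which is exactly the coinvariants obstruction you isolated. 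In other words, your diagnosis of where the difficulty sits is sharper than the paper's write-up, but your proposed cure is not a proof: to close the argument one needs either an argument specific to these complexes showing that the relevant $\cB_{n}$-module is detected by its coinvariants, or one must replace $H_{d}(Y_{n},X_{n})$ by $H_{d}(\tilde{Y}_{n},\tilde{X}_{n})$ (equivalently, homology with $\mathbb{Z}[\cB_{n}]$-coefficients) in the statement, at the cost of the finiteness of the resulting computation. As it stands, the proposal has a genuine gap at its central step.
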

    \begin{proof}
      As $n\geqslant 4$, the pair
      $(
        Y_{n},X_{n}
      )$
      is $1$-connected by
      Propositions~\ref{a4-c3-c13-second-generation-criterion} and~\ref{a4-c3-c13-small-gen}.
      Thus, it follows from the relative Hurewicz theorem
      that $m$-connec\-tiv\-ity of the pair is equivalent to
      $m$-acyclicity. By Proposition~\ref{a4-c3-c13-second-generation-criterion},
      this translates into higher generation of
      $\cB_{n}$ by $\mathfrak{B}_{n}$.
    \end{proof}
    As the pair
    $(
        Y_{n},X_{n}
    )$
    consists of finite complexes that can be described explicitly,
    Theorem~\ref{a4-c3-c13-criterion-acyclic} implies that
    is it a \emph{finite problem} to determine
    the higher connectivity properties
    of $\cB_{n}$ relative to the family $\mathfrak{B}_{n}$.
    In particular, the question whether the bounds derived in
    \cite[Example 15.5.4]{sfb701}
    for higher generation in braid groups are sharp becomes amenable to
    empirical investigation.

    \subsection{Curvature in braid groups}\label{a4-c3-c13-curvature-in-braid-groups}
      \begin{definition}\label{a4-c3-c13-artin-group}
        For an $n \times n$ symmetric matrix
        $(m_{ij})$ with entries in \linebreak
        $\{ 2,3, \dots \} \cup \{ \infty\}$ we define the associated
        \emph{Artin group} to be
        \[
          \left\langle\,{
            s_{1},\ldots,s_{n}
          }\,\,\,\vrule\,\,
            \underbrace{
              s_{i}s_{j}s_{i}\cdots
            }_{m_{ij}\text{\ factors}}
            =
            \underbrace{
              s_{j}s_{i}s_{j}\cdots
            }_{m_{ij}\text{\ factors}}
          \,\right\rangle
        \]
        Here, $m_{ij}=\infty$ indicates that there is no defining
        relation for $s_{i}$ and $s_{j}$. We will refer to the
        relations appearing above as \emph{braid relations} (even
        though some authors reserve this term for the relation with
        $m_{ij}=3$).
        
        If one additionally forces the generators $s_{i}$ into being
        involutions, one obtains the associated \emph{Coxeter
          group}. A pair consisting of a Coxeter group together with
        the generating set $\{ s_{1},\ldots,s_{n} \}$ is called a
        \emph{Coxeter system}; its \emph{rank} is defined to be the
        cardinality of the generating set. If the Coxeter group is spherical, 
 the Coxeter system is said to be \emph{spherical} as well.
        
A Coxeter group is \emph{spherical} if it is finite; an Artin group is
\emph{spherical} if the corresponding Coxeter group is spherical.
      \end{definition}
      Note that the braid group $\cB_{n}$ is an Artin group
      and the symmetric group $S_{n}$ is the associated Coxeter
      group. Here, $m_{ij}=3$ for 
      $|i-j|=1$
      and $m_{ij}=2$ otherwise.
      See Fact~\ref{a4-c3-c13-artin-presentation-braid-groups}
      
      Artin groups form a rich class of groups of importance in geometric group
      theory and beyond. From geometric group theory perspective they remain
      in focus largely due to the following conjecture.
      \begin{conjecture}[Charney]\label{a4-c3-c13-conj charney}
        \index{conjecture!Charney's}
        Every Artin group is {\small{CAT(0)}}, i.e. it acts properly and
        cocompactly on a {\small{CAT(0)}} space.
      \end{conjecture}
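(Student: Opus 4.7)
Since Charney's conjecture is open, what follows is a proposal for an approach rather than a complete argument. The natural division is between the spherical Artin groups, where the theory of generalised non-crossing partitions gives a rich combinatorial model, and the non-spherical case, where one must rely on the Deligne complex.

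For spherical Artin groups I would first extend the classifying space construction of Section~\ref{a4-c3-c13-classifingy-space-for-braid-group} from type $\mathsf{A}_{n}$ to arbitrary spherical type, using Bessis's generalised non-crossing partition lattice $\operatorname{NC}(W)$ associated to a Coxeter element. Repeating Brady's construction yields a simplicial complex $\tilde{Y}_{W}$ that is contractible and admits a free, cocompact action by the corresponding spherical Artin group. I would then equip $\tilde{Y}_{W}$ with the \emph{orthoscheme metric} of Brady--McCammond, modelling the simplex attached to a chain $p_{0}\prec p_{1}\prec\cdots\prec p_{k}$ in $\operatorname{NC}(W)$ on the Euclidean simplex whose edges along the chain are orthogonal and of length $\sqrt{\operatorname{rk}(p_{i})-\operatorname{rk}(p_{i-1})}$. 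By Gromov's link condition, proving the CAT(0) property then reduces to proving that the link of each vertex is CAT(1); by translation-invariance of the metric under the group action, it suffices to verify this at the identity, where the link is a spherical complex built directly from the proper part of $\operatorname{NC}(W)$.

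For non-spherical Artin groups the plan is to work with the Deligne complex equipped with the Moussong metric, extending the Charney--Davis theorem from FC-type to arbitrary type. Local CAT(0) models over standard parabolic subgroups, each of which is spherical and hence covered by the previous step, would then be glued together by a nerve-style argument in the spirit of Proposition~\ref{a4-c3-c13-BradySubcomplex}, pulling the CAT(0) property upward from links to the ambient space.

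The main obstacle is precisely the link condition for the orthoscheme metric. Even for type $\mathsf{A}_{n}$ the CAT(0) property of the Brady complex has been verified only for small $n$; ruling out closed geodesics of length less than $2\pi$ in the vertex links turns into a delicate combinatorial-metric question about chains in $\operatorname{NC}(n)$, and the known spherical geometry of these links is genuinely subtle. Outside the spherical and FC settings, even the correct candidate for a CAT(0) model is unclear, and this is precisely why Charney's conjecture remains open in general.
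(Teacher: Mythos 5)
The statement you were asked about is a \emph{conjecture}: the paper offers no proof of it, and explicitly records that outside the right-angled case it is mostly open, including for the braid groups themselves. You correctly recognise this, and your sketch is an accurate summary of the programme the paper actually surveys: Brady's non-crossing-partition classifying space, the Brady--McCammond orthoscheme metric, reduction via Gromov's link condition to the CAT(1) property of the link (equivalently, of the orthoscheme complex of $\operatorname{NC}(n)$), and the Haettel--Kielak--Schwer embedding of that link into a spherical building, which together yield the result only for $n\leqslant 6$. Your identification of the main obstacle --- ruling out short closed geodesics in the vertex links --- matches where the paper says the difficulty lies. Two caveats: your proposed edge lengths $\sqrt{\operatorname{rk}(p_i)-\operatorname{rk}(p_{i-1})}$ describe the diagonal lengths in the orthoscheme rather than its defining data (the orthoscheme is built from unit orthogonal steps along a \emph{maximal} chain, with lower-dimensional simplices inheriting the metric as faces); and your second paragraph on the non-spherical case goes beyond anything in the paper and is purely speculative. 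Since no proof exists, none of this constitutes a gap in the usual sense --- but it should be clearly understood that what you have written is a research plan, not an argument, and the paper itself makes no claim to more.
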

      A {\small{CAT(0)}} space is a metric space with curvature bounded from
      above by 0; for details see the book by
      Bridson--Haefliger~\cite{a4-c3-c13-BridsonHaefliger1999}. From the current
      perspective let us list some properties of {\small{CAT(0)}} groups:
      algorithmically, such groups have
      quadratic Dehn functions and hence soluble word problem; geometrically,
      all free-abelian subgroups thereof are undistorted; algebraically,
      the centralisers of infinite
      cyclic subgroups thereof split; topologically, the space witnessing
      {\small{CAT(0)}}-ness of a group $G$ is a finite model for
      $\underline{E}G$ and
      thus, for example, allows to compute the K-theory of the reduced
      $C^{*}$-algebra $C^{*}_{\text{r}}(G)$ provided the Baum--Connes conjecture is
      known for $G$.
      
      Conjecture~\ref{a4-c3-c13-conj charney} has been verified by
      Charney--Davis for \emph{right-angled Artin groups (RAAGs)},
      that is for Artin groups with each $m_{ij}$ equal to 2 or
      $\infty$. Outside of this class, the conjecture is mostly open.
      In particular, it is open (in general) for the braid groups
      $\cB_{n}$.

      To prove that a group $G$ is {\small{CAT(0)}}, one has to first
      construct a space $X$ on which $G$ acts properly and
      cocompactly, and then prove that the space is indeed {\small{CAT(0)}}.
      We shall use the space $\tilde{Y}_{n}$ from above, on
      which $\cB_{n}$ acts freely and with compact quotient.
      
      What is missing, however, is a metric structure on
      $\tilde{Y}_{n}$.  Such a metric can be specified by realising
      the simplices in euclidean space, i.e., by endowing each simplex
      in $\tilde{Y}_{n}$ with the metric of a euclidean
      polytope. Instead of the standard one, we will follow
      Brady--McCammond~\cite{a4-c3-c13-BradyMcCammond2010}.
      
      \begin{definition}
        Let $e_{1},\ldots,e_{m}$ denote the
        standard basis of $\RR^{m}$.
        The \emph{$m$-orthoscheme} is the convex hull
        of
        \(
          \{\,
            0,
            e_{1},
            e_{1}+e_{2},
            \ldots,
             e_{1}+e_{2}+\cdots+e_{m}
          \,\}
        \).
        The orthoscheme has the structure of an $m$-simplex
        and the vertices come with a grading: the vertex
        \(
          e_{1} + \cdots + e_{k}
        \)
        is declared to be of \emph{rank} $k$. 
      \end{definition}
      
      We now endow each maximal simplex in $\tilde{Y}_{n}$
      with the orthoscheme metric. Let
      \[
        \Sigma
        =
        \{\,
          \beta, \beta\sigma_{1}, \ldots,
          \beta\sigma_{n}
        \,\}
      \]
      be a maximal simplex.
      Here, $\beta$ is a braid in $\cB_{n}$ and
      \(
        1 < \sigma_{1}
        <\sigma_{2}
        <\cdots<
        \sigma_{n}
      \)
      is a maximal chain in $\Gamma_{n}\cong\operatorname{NC}(n)$,
      which has length $n$ by
      Observation~\ref{a4-c3-c13-dimension-of-brady-complex}.
      We endow $\Sigma$ with the metric of the standard
      $n$-orthoscheme by identifying $\beta\sigma_{k}$
      with the vertex of rank $k$ in the orthoscheme.
      \index{orthoscheme metric}
      It is easy to see that if two maximal simplices intersect,
      they induce identical metric on their common face. Thus we have
      turned $\tilde{Y}_{n}$ into a metric simplicial complex.
      
      Note that $\tilde{Y}_{n}$ is obtained by gluing copies of a
      single shape, the $n$-orthoscheme, and so $\tilde{Y}_{n}$ is a
      geodesic metric space by a result of Bridson (finitely many
      shapes of cells would suffice). Since the shape is euclidean, we
      may use Gromov's link condition and deduce the following:
      \begin{lemma}
        $\tilde{Y}_{n}$ is {\small{CAT(0)}} if and only if the link
        of each vertex in $\tilde{Y}_{n}$ is {\small{CAT(1)}}.
      \end{lemma}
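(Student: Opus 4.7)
The plan is to recognize the statement as an instance of Gromov's link condition combined with the Cartan--Hadamard theorem. Since $\tilde{Y}_{n}$ has been assembled from a single shape (the Euclidean $n$-orthoscheme) in the sentence preceding the lemma, it is an $M_{0}$-polyhedral complex in the sense of Bridson--Haefliger~\cite{a4-c3-c13-BridsonHaefliger1999}; in particular it is a complete geodesic metric space.

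First I would reduce the global {\small{CAT(0)}} property to a local one. By the Cartan--Hadamard theorem for metric spaces, a complete geodesic space is {\small{CAT(0)}} if and only if it is locally {\small{CAT(0)}} and simply connected. Simple connectivity of $\tilde{Y}_{n}$ is already available: Theorem~\ref{a4-c3-c13-brady-main} asserts that $\tilde{Y}_{n}$ is contractible, hence certainly simply connected. So the question reduces to whether $\tilde{Y}_{n}$ is locally {\small{CAT(0)}}.

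Next I would invoke Gromov's link condition in the form given by Bridson--Haefliger (Thm.~II.5.5 in~\cite{a4-c3-c13-BridsonHaefliger1999}): a Euclidean polyhedral complex with finitely many isometry types of cells is locally {\small{CAT(0)}} if and only if the link of every vertex, endowed with the induced spherical metric, is {\small{CAT(1)}}. Since all maximal simplices of $\tilde{Y}_{n}$ are isometric to the standard $n$-orthoscheme, the finite-shapes hypothesis is met, and this criterion applies directly. Combining the two reductions yields the biconditional claimed in the lemma.

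The only genuinely subtle point in presenting the argument is making sure the reader sees why the link condition, which is inherently a local statement, suffices to conclude the global {\small{CAT(0)}} property; the bridge is Cartan--Hadamard together with the contractibility provided by Theorem~\ref{a4-c3-c13-brady-main}. There is no real technical obstacle beyond citing these standard results; checking whether links actually are {\small{CAT(1)}} is the hard combinatorial/geometric problem that is left for the subsequent sections and is not part of this lemma.
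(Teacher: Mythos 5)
Your argument is correct and follows the same route the paper intends: the paper simply remarks that $\tilde{Y}_{n}$ is built from copies of the euclidean $n$-orthoscheme and invokes Gromov's link condition, leaving the local-to-global passage implicit. You have merely made explicit the two standard ingredients (the link condition for locally {\small{CAT(0)}}, and Cartan--Hadamard together with the simple connectivity supplied by Theorem~\ref{a4-c3-c13-brady-main}), which is exactly the intended justification.
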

      Here {\small{CAT(1)}} means that the curvature of the space is bounded
      above by that of the unit sphere; again, for details
      see~\cite{a4-c3-c13-BridsonHaefliger1999}.

      The poset $\Gamma_{n}\cong\operatorname{NC}(n)$ has a
      unique maximal element, which is the braid $\gamma$ corresponding
      to the full counter-clockwise rotation:
      \[
        v_{1}\mapsto v_{2}\mapsto\cdots\mapsto
        v_{m}\mapsto v_{1}
      \]
      The $n^{\textit{th}}$ power $\gamma^{n}$ is central in the braid
      group $\cB_{n}$. In fact, it generates the infinite cyclic
      center of $\cB_{n}$.  Brady--McCammond observed
      in~\cite{a4-c3-c13-BradyMcCammond2010} that this algebraic fact
      has a geometric counterpart: $\tilde{Y}_{n}$ splits as a
      cartesian product of the real line $\RR$ and another metric
      space. The $\RR$-factor inside $\tilde{Y}_{n}$ points in the
      direction of the edges labelled by $\gamma$.
      
      Because of this, instead of looking at the link of a vertex
      $u$ in $\tilde{Y}_{n}$, one can look at
      the link of a midpoint of the (long) edge
      $(u,u\gamma)$; every two such links
      are isometric (since $\cB_{n}$ acts transitively
      on the vertices of $\tilde{Y}_{n}$), and so let
      $L$ denote any such link.
      
      To compute the curvature of $L$, it is enough to study
      the subcomplex of $\tilde{Y}_{n}$ spanned by all
      simplices containing the edge $(u,u\gamma)$.
      Clearly, this is the subcomplex spanned by $L$ and $u\sigma$
      with
      \(
        1\leq\sigma\leq\gamma
      \),
      with simplices defined by the chain condition as before. Thus, such
      a link is isomorphic as a simplicial complex to the realisation
      of $\operatorname{NC}(n)$; the subcomplex also comes with a
      metric, and it is clear that this coincides with the realisation
      of $\operatorname{NC}(n)$ being endowed with its own orthoscheme
      metric defined as before by identifying each maximal simplex with
      the $n$-orthoscheme. We will refer to the realisation
      of $\operatorname{NC}(n)$ with this metric simply as the
      \emph{orthoscheme complex} of $\operatorname{NC}(n)$.
      
      Note that if the orthoscheme complex of $\operatorname{NC}(n)$
      is {\small{CAT(0)}}, then $L$, isometric to the link of the
      midpoint of the main diagonal, is $\mathrm{CAT(1)}$, which implies
      that $\tilde{Y}_{n}$, and so $\cB_{n}$,
      is {\small{CAT(0)}}.
      
      In view of the above, Brady--McCammond formulate the following
      conjecture.
      \begin{conjecture}[{\cite[Conj.~8.4]{a4-c3-c13-BradyMcCammond2010}}]
        For every $n$, the orthoscheme complex of
        $\operatorname{NC}(n)$ is {\small{CAT(0)}}, and so the
        braid group $\cB_{n}$ is {\small{CAT(0)}}.
      \end{conjecture}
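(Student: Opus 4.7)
The conjecture is open, so I can only describe a plausible line of attack rather than a complete proof. The natural tool is Gromov's link condition: since the orthoscheme complex of $\operatorname{NC}(n)$ is a piecewise Euclidean complex built from a single shape (the $n$-orthoscheme), it is {\small CAT(0)} if and only if it is simply connected and the link of every vertex is {\small CAT(1)}. Simple connectedness is immediate, since the underlying order complex of $\operatorname{NC}(n)$ is contractible (both $\bot$ and $\top$ serve as cone points). Thus the whole problem reduces to verifying the {\small CAT(1)} condition for every vertex link.

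The plan is to induct on $n$. Fix a vertex $p\in\operatorname{NC}(n)$ and examine its link in the orthoscheme complex. Using Observation~\ref{a4-c3-c13-intervall-structure}, the portion of the link coming from chains descending from $p$ is controlled by the orthoscheme complexes of $\operatorname{NC}(B_i)$ as $B_i$ ranges over the blocks of $p$, and the ascending portion by the orthoscheme complexes of $\operatorname{NC}(B_j')$ over the blocks of $p^{\mathrm{c}}$. Provided $p\notin\{\bot,\top\}$, both factorisations involve only $\operatorname{NC}$-lattices of strictly smaller size, and the induction hypothesis tells us the corresponding orthoscheme complexes are {\small CAT(0)}. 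One then has to check that the way these two pieces are glued to form the full link of $p$ --- essentially as a spherical join, but with the angles prescribed by the orthoscheme geometry --- preserves {\small CAT(1)}. This is the step where the particular orthoscheme metric, as opposed to a regular-simplex metric, must be reconciled with the product structure of intervals in $\operatorname{NC}(n)$.

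The main obstacle is the extremal case $p=\bot$ (and, by Kreweras self-duality, $p=\top$): here one factor in the interval decomposition is trivial, so the link is the whole punctured orthoscheme complex of $\operatorname{NC}(n)$, and induction gives no traction. A promising approach is to exploit the product splitting of $\tilde Y_n$ furnished by the central full-twist $\gamma^n$, recalled just before the conjecture: the link of the midpoint of a $\gamma$-edge is an isometric copy of the orthoscheme complex of $\operatorname{NC}(n)$ itself, and analysing closed local geodesics there via the Kreweras involution and the rank grading should reduce the question to a finite family of angle inequalities inside a single $n$-orthoscheme. The genuinely hard part is controlling how a local geodesic refracts as it crosses the faces shared by adjacent orthoschemes corresponding to non-trivial blockwise Kreweras complements; it is exactly at this step that the direct verification of Brady--McCammond for small $n$ stops being routine, and any general proof will have to introduce substantially new ideas --- perhaps using the Garside structure on $\Gamma_n$ and Fact~\ref{a4-c3-c13-set} to parametrise all potential closed geodesics combinatorially.
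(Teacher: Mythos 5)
The statement you were asked to prove is labelled as a \emph{conjecture} in the paper (Brady--McCammond, Conjecture~8.4), and the paper contains no proof of it: it records only that the conjecture holds for $n\leqslant 4$ by inspection, for $n= 5$ by Brady--McCammond's computer-assisted analysis of short loops in the link $L$, and for $n\leqslant 6$ by Haettel--Kielak--Schwer via an isometric embedding of $L$ into the spherical building of $\operatorname{SL}_{n-1}(\FF)$ over the field of two elements. Your opening assessment is therefore exactly right: this is an open problem, no complete argument can be expected, and there is no proof in the paper against which to measure your sketch.

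That said, your proposed strategy tracks what is actually known quite faithfully: the reduction via Gromov's link condition, the induction on $n$ reducing everything (given the conjecture for all $m<n$) to a single link $L$, the use of Bowditch's criterion on loops of length less than $2\pi$, and the localisation of the difficulty at the extremal vertices. Two adjustments. First, the step you flag as delicate for $p\notin\{\bot,\top\}$ is in fact the benign one: the orthoscheme complex of a product of graded posets is the $\ell^2$-metric product of the orthoscheme complexes, so the link of $p$ is the spherical join of vertex links in the orthoscheme complexes of $[\bot,p]$ and $[p,\top]$; these are {\small{CAT(1)}} by the inductive hypothesis (vertex links in a {\small{CAT(0)}} piecewise Euclidean complex are {\small{CAT(1)}}), and spherical joins of {\small{CAT(1)}} spaces are {\small{CAT(1)}}. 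All of the difficulty sits at $\bot$ and $\top$, as you say. Second, a small inaccuracy: the link $L$ of the midpoint of the edge $(u,u\gamma)$ is not an isometric copy of the orthoscheme complex of $\operatorname{NC}(n)$; rather, the subcomplex of $\tilde{Y}_{n}$ spanned by all simplices containing that edge is such a copy, while $L$ itself is the realisation of the open interval --- $\operatorname{NC}(n)$ with $\bot$ and $\top$ deleted --- carrying the \emph{spherical} orthoscheme metric. If you want a concrete avenue beyond the computer search, the building embedding of Haettel--Kielak--Schwer described at the end of the section is the paper's own suggestion for where additional leverage might come from.
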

      For $n\leqslant 4$, the conjecture is easily seen to be true. 
      
      If we know that the orthoscheme complexes of $\operatorname{NC}(m)$
      are {\small{CAT(0)}} for each $m<n$, then in fact the
      orthoscheme complex of $\operatorname{NC}(n)$ is {\small{CAT(0)}} if and
      only if the link $L$ is {\small{CAT(1)}}. Thus, for $n=5$,
      it is enough to study $L$, which is the realisation of the
      poset obtained from $\operatorname{NC}(n)$ by removing the trivial
      and improper partitions, and endowing the realisation with the
      \emph{spherical orthoscheme metric}. Knowing that the conjecture
      is true for all $m<5$ tells us that $L$ is
      locally {\small{CAT(1)}}. Thus, using the work of
      Bowditch~\cite{a4-c3-c13-Bowditch1995}, it is enough to check whether
      any loop in $L$ of length less than $2\pi$ can be
      \emph{shrunk}, i.e., homotoped to the trivial loop without
      increasing its length in the process.
      
      Brady--McCammond use a computer to analyse all loops in
      $L$ shorter than $2\pi$, and show that they are indeed
      shrinkable, thus establishing:
      \begin{theorem}[{\cite[Thm.~B]{a4-c3-c13-BradyMcCammond2010}}]
        For $n\leqslant 5$, the braid group $\cB_{n}$
        is {\small{CAT(0)}}.
      \end{theorem}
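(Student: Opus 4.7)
The plan is to use the chain of reductions assembled in the preceding subsection and then verify the base cases together with the case $n=5$. Concretely, for any $n$ one has the Brady complex $\tilde Y_n$ on which $\cB_n$ acts freely and cocompactly (Theorem~\ref{a4-c3-c13-brady-main}), and I would equip each of its maximal simplices with the orthoscheme metric. The center of $\cB_n$, generated by $\gamma^n$, produces a splitting $\tilde Y_n \cong \RR \times L'$ of metric spaces, so $\cB_n$ is CAT(0) as soon as $\tilde Y_n$ is, which by Gromov's link condition is equivalent to CAT(1)-ness of the link $L$ at the midpoint of an edge labelled by $\gamma$. This link is canonically identified with the spherical orthoscheme realisation of the proper part of $\operatorname{NC}(n)$, so the task reduces to showing that the orthoscheme complex of $\operatorname{NC}(n)$ is CAT(0) for $n\leq 5$.

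For $n\leq 4$, I would just observe that the orthoscheme complex is low-dimensional enough that CAT(0)-ness can be checked directly: the relevant links are either empty, spherical joins of points, or small enough circular graphs that the Gromov condition is visibly satisfied. This provides the base of the induction and, incidentally, recovers the classical fact that $\cB_n$ is CAT(0) for these small $n$.

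For the crucial case $n=5$, I would argue inductively. Assuming the orthoscheme complexes of $\operatorname{NC}(m)$ are CAT(0) for all $m<5$, the link of any vertex of the link $L$ in the $n=5$ orthoscheme complex is, by Observation~\ref{a4-c3-c13-intervall-structure}, a spherical join of smaller orthoscheme complexes (those associated with intervals $[p,q]$ in $\operatorname{NC}(5)$, which factor as products $\prod_B \operatorname{NC}(B)$). The inductive hypothesis makes each factor CAT(1), and spherical joins of CAT(1) spaces are CAT(1), so $L$ is \emph{locally} CAT(1). At this point I would invoke Bowditch's theorem: a locally CAT(1) space of dimension at most some finite bound is CAT(1) iff every loop of length $<2\pi$ is homotopic through loops of non-increasing length to a constant loop.

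The main obstacle — and the step I would not attempt by hand — is the verification that every closed geodesic, or more precisely every closed loop of length strictly less than $2\pi$ in $L$, can be shrunk. This reduces to a finite but genuinely large combinatorial search over the piecewise-geodesic loops in the finite spherical complex $L$, and is exactly where Brady--McCammond deploy a computer-assisted case analysis. I would therefore encode $L$ as a finite metric simplicial complex, enumerate the finitely many combinatorial types of short closed geodesics, and check shrinkability for each, delegating the bookkeeping to a computer. Once this finite check succeeds, the reductions above deliver CAT(0)-ness of $\cB_5$, and combined with the base cases we obtain the theorem for all $n\leq 5$.
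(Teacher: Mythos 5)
Your proposal follows essentially the same route the paper sketches: the orthoscheme metric on $\tilde{Y}_{n}$, the splitting off of the $\RR$-factor coming from the central power of $\gamma$, the reduction via Gromov's link condition to CAT(0)-ness of the orthoscheme complex of $\operatorname{NC}(n)$, the direct check for $n\leqslant 4$, the inductive local-CAT(1) argument combined with Bowditch's criterion for $n=5$, and finally the computer-assisted verification that all loops in $L$ of length less than $2\pi$ are shrinkable. This matches the Brady--McCammond argument as presented in the paper, so the approach is correct and not genuinely different.
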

      
      Haettel, Kielak and Schwer go beyond that, proving
      \begin{theorem}[{\cite[Cor.~4.18]{a4-c3-c13-Haetteletal2016}}]\label{a4-c3-c13-thm c3-HKS}
        For $n\leqslant 6$, the braid group $\cB_{n}$ is {\small{CAT(0)}}.
      \end{theorem}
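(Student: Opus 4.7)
The plan is to leverage the Brady--McCammond framework from Section~\ref{a4-c3-c13-curvature-in-braid-groups} and push the analysis one step further, from $n=5$ to $n=6$, by induction on $n$. Recall that $\cB_n$ acts freely and cocompactly on the orthoscheme complex $\tilde{Y}_n$, that $\tilde{Y}_n$ splits isometrically as $\RR\times L'$ along the direction of the central element $\gamma^n$, and that by Bridson's geodesic theorem together with Gromov's link condition it suffices to verify that the link $L$ of the midpoint of a $\gamma$-labelled edge --- namely the spherical orthoscheme realisation of the proper part of $\operatorname{NC}(n)$ --- is {\small{CAT(1)}}.

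The inductive reduction is as follows. Suppose the orthoscheme complex of $\operatorname{NC}(m)$ is {\small{CAT(0)}} for every $m<n$; this is the content of the Brady--McCammond theorem for $m\leqslant 5$. By Observation~\ref{a4-c3-c13-intervall-structure}, every closed interval in $\operatorname{NC}(n)$ splits as a cartesian product of smaller non-crossing partition lattices, so the link of each vertex inside $L$ is isometric to a spherical join of spherical orthoscheme complexes associated with those smaller lattices. A spherical join of {\small{CAT(1)}} spaces is itself {\small{CAT(1)}}, and the spherical orthoscheme complex built from a {\small{CAT(0)}} euclidean orthoscheme complex is {\small{CAT(1)}}, so $L$ is locally {\small{CAT(1)}}. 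By the theorem of Bowditch~\cite{a4-c3-c13-Bowditch1995}, to upgrade this to a global {\small{CAT(1)}} statement it suffices to show that every closed local geodesic in $L$ of length strictly less than $2\pi$ is null-homotopic through a family of loops of non-increasing length --- i.e.\ is \emph{shrinkable}.

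The genuinely new case is therefore $n=6$, and this is where the main obstacle lies: the spherical orthoscheme complex of the proper part of $\operatorname{NC}(6)$ is considerably larger than for $n\leqslant 5$, and the number of combinatorial types of closed local geodesics of length below $2\pi$ grows sharply, so the Brady--McCammond brute-force computer search does not readily scale. The strategy is to develop structural tools that reduce the verification to a tractable finite check: establishing normal forms for geodesic segments inside a single orthoscheme (so that short closed geodesics can be coded by finite combinatorial data), exhibiting length-non-increasing local moves that homotope a given geodesic loop to a canonical representative in its class, and exploiting the order-$n$ rotational symmetry of $\operatorname{NC}(n)$ induced by the squared Kreweras complement together with the natural dihedral symmetry to drastically cut down the search space.

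Once every short closed local geodesic in $L$ has been exhibited as shrinkable --- a finite computation after all the symmetry and normal-form reductions have been applied --- Bowditch's criterion gives that $L$ is {\small{CAT(1)}}, Gromov's link condition promotes this to $\tilde{Y}_6$ being {\small{CAT(0)}}, and the free cocompact action of $\cB_6$ on $\tilde{Y}_6$ then delivers the theorem.
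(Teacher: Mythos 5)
Your setup is fine and matches the framework the paper recalls: reduce, via the isometric splitting off of the $\RR$-factor in the direction of $\gamma$ and Gromov's link condition, to showing that the link $L$ --- the spherical orthoscheme complex of the proper part of $\operatorname{NC}(6)$ --- is {\small{CAT(1)}}; get local {\small{CAT(1)}}-ness from the inductive hypothesis for $m<6$ via the product decomposition of intervals (Observation~\ref{a4-c3-c13-intervall-structure}); and invoke Bowditch's criterion to reduce the global statement to shrinkability of closed local geodesics of length less than $2\pi$. All of that is already in the text preceding the theorem. The problem is that everything after this reduction --- which is the entire mathematical content of the case $n=6$ --- is left as a programme rather than an argument. ``Establish normal forms,'' ``exhibit length-non-increasing local moves,'' and ``exploit the dihedral symmetry to cut down the search space'' are aspirations, not steps; you never produce the normal forms, never verify that the proposed moves terminate at shrinkable representatives, and never carry out (or even bound the size of) the resulting finite check. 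There is no reason given why these reductions make the $n=6$ search tractable, and historically they did not: the Brady--McCammond computer search is exactly what failed to scale past $n=5$, which is why Theorem~\ref{a4-c3-c13-thm c3-HKS} required a new idea.

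That new idea, which the paper sketches immediately after the statement, is qualitatively different from what you propose. Haettel--Kielak--Schwer embed $L$ isometrically into the spherical building of $\operatorname{SL}_{n-1}(\FF)$ over the field $\FF$ with two elements, by sending a proper non-trivial partition $p$ with blocks $B_{1},\ldots,B_{k}$ to the subspace of $V=\ker\bigl(\sum_{j}\boldsymbol{b}^{*}_{j}\bigr)$ cut out by the characters $\sum_{j\in B_{i}}\boldsymbol{b}^{*}_{j}$, in such a way that maximal simplices of $L$ map isometrically onto chambers. Since spherical buildings are {\small{CAT(1)}}, this gives genuine leverage on short geodesics in $L$ that no amount of symmetry reduction of the raw search provides, and the resulting proof is not computer assisted. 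So the gap in your proposal is concrete: the one step that distinguishes $n=6$ from $n\leqslant 5$ is asserted rather than proved, and the method you assert would close it is the one that is known not to extend.
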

      Note that their proof is not computer assisted. The crucial improvement
      in the work of Haettel--Kielak--Schwer is to use the observation
      (present already in~\cite{a4-c3-c13-BradyMcCammond2010}), that the link
      $L$ can be embedded into a spherical building, in the following
      way.
      
      First observe that the vertices of $L$ are non-trivial proper
      partitions; let $p$ be such a partition with blocks
      \(
        B_{1},\ldots,B_{k}
      \).
      Let $\FF$ be the field of two elements; we associate to
      $p$ the subspace of
      \(
        \FF^{n}
        =
        \langle 
          \boldsymbol{b}_{1},\ldots,\boldsymbol{b}_{n}
         \rangle
      \)
      which is the intersections of the kernels of the characters
      \[
        \sum_{{
          j \in B_{i}
        }}
          \boldsymbol{b}^{*}_{j} 
        = 0
      \]
      where $1 \leqslant i \leqslant k$, and
      $\boldsymbol{b}^{*}_{j}$ is the $j$-th character in the basis
      dual to the $\boldsymbol{b}_{j}$.
      
      It is easy to see that this gives a map sending each vertex of
      $L$ to a proper non-trivial subspace of
      \(
        V
        :=
        \ker\big(
          \sum_{j=1}^{n}
            \boldsymbol{b}^{*}_{j} 
        \big)
      \).
      But these subspaces are precisely the vertices of the spherical
      building of
      \(
        \operatorname{SL}_{n-1}(\FF)
      \),
      and it turns out that
      our bijection extends to a map sending each maximal simplex
      in $L$ onto a chamber (i.e. maximal simplex) in the
      building in an isometric way. Thus we may view $L$ as
      a subcomplex of the building.
      
      The spherical building is {\small{CAT(1)}}, and this information gives
      the extra leverage used to prove Theorem~\ref{a4-c3-c13-thm c3-HKS}.

 \section{Non-crossing partitions in Coxeter groups}
 
 In this section, we introduce the general theory of non-crossing
 partitions and explain how non-crossing partitions appear in group
 theory.  As already observed in the beginning of
 Section~\ref{a4-c3-c13-curvature-in-braid-groups}, the symmetric
 group $S_n$ is a Coxeter group and $(S_n,S_{\rm tr})$ is a Coxeter
 system of rank $n-1$ where
 $$S_{\rm tr}:= \{(i,i+1)~|~ 1 \leq i \leq n-1\}$$
 is the set of neighbouring transpositions.
 
 Every Coxeter system $(W,S)$ acts faithfully on a real vector space
 that is equipped with a symmetric bilinear form $(- , -)$ such that
 for every $s \in S$ there is a vector $\alpha_s \in V$ so that $s$
 acts as the reflection
 $$r_{\alpha_s}: v \mapsto v - 2\frac{(v, \alpha_s)}{(\alpha_s,
   \alpha_s)}\alpha_s$$
 on $V$. Thus every Coxeter group is a reflection group that is a
 group generated by a set of reflections on a vector space
 $(V,(- , -))$.
 
 The vectors $\alpha_s$ can be chosen so that the subset
 $\Phi = \{w(\alpha_s) ~|~s \in S, w \in W\}$ of $V$ is a so called
 \emph{root system}.  For a spherical Coxeter system a \emph{root
   system}\index{root system} $\Phi$ is characterised by the following
 three axioms
 
 \begin{itemize}
 	\item[(R1)]  $\Phi$ generates $V$;
 	\item[(R2)] $\Phi \cap \RR\alpha = \{\pm \alpha\}$ for all $\alpha \in \Phi$;
 	\item[(R3)] $s_\alpha(\beta)$ is in $\Phi$ for all $\alpha, \beta \in \Phi$. 
 \end{itemize}
 The spherical Coxeter groups $W$ are precisely the finite real
 reflection groups.
 
 Coxeter classified the finite root systems which then also gives a
 classification of the spherical Coxeter systems: there are the
 infinite families of type $A_n, B_n, C_n$ and $D_n$ and some
 exceptional groups. For instance $(S_n,S_{\rm tr})$ is of type
 $A_{n-1}$.  Note that the groups of type $B_n$ and $C_n$ are
 isomorphic; and also that the root systems of type $A_n, B_n, C_n$
 and $D_n$ are all \emph{crystallographic}\index{root system!crystallographic} that is
 $$\frac{(\alpha,\beta)}{(\alpha, \alpha)} \in \ZZ~\mbox{for all}~
 \alpha, \beta \in \Phi.$$
 
 We call $T:= \cup_{w \in W}w^{-1}Sw$ the set of reflections of the
 Coxeter system $(W,S)$.  If the system is spherical, then $T$ is
 indeed the set of all reflections.
 
 For instance in the symmetric group $S_n$ the set $T$ is the
 conjugacy class of transpositions, see also
 Section~\ref{a4-c3-c13-braid-groups}. There the so called
 \emph{absolute order}\index{absolute order} $\leq_T$ on $S_n$ has
 been introduced. Let $[id, (1,2,\ldots ,n)]_{\leq_T}$ be the closed
 intervall in $S_n$ with respect to $\leq_T$.  In Fact 1.3.4 it has
 been stated that $(\operatorname{NC}(n),\subseteq)$ and
 $([id, (1,2,\ldots ,n)]_{\leq_T}, \leq_T)$ are posets that are
 isomorphic.  Therefore $\operatorname{NC}(n)$ can be thought of being
 of type $A_{n-1}$.
 
 Out of combinatorial interest, Reiner generalised the concept of non-crossing partitions to  the infinite series of type $B_n$ and $D_n$
 geometrically \cite{a4-c3-c13-Rei}.  Independently of his work and of each other Brady and Watt  \cite{a4-c3-c13-BrWa}  as well as Bessis \cite{a4-c3-c13-Dual} 
 generalised the concept of non-crossing partitions to all the finite Coxeter systems. Their approach agrees with Reiner's in type $B_n$ \cite{a4-c3-c13-Arm}.
 
 Brady and Watt as well as Bessis started independently the study of
 the dual Coxeter system $(W,T)$ instead of $(W,S)$.  A \emph{dual
   Coxeter system}\index{dual Coxeter system} $(W,T)$ of finite rank
 $n$ has the property that there is a subset $S$ of $T$ such that
 $(W,S)$ is a Coxeter system \cite{a4-c3-c13-Dual}.  It then follows
 that $T$ is the set of reflections in $(W,S)$. This concept is called
 by Bessis \emph{dual approach to Coxeter and Artin groups}.
 
 A \emph{(parabolic) standard Coxeter element}\index{Coxeter element}
 in $(W, S)$ is the product of all the elements in (a subset of) $S$
 in some order and a \emph{(parabolic) Coxeter element} in $(W,T)$ is
 a (parabolic) standard Coxeter element in $(W,S)$ for some simple
 system $S$ in $T$ for $W$.
 
 For instance in type $A_{n-1}$, so in the symmetric group $S_n$, the
 standard Coxeter elements with respect to $S= S_{\rm tr}$ are
 precisely those $n$-cycles in $S_n$ that can be written as a first
 increasing and then decreasing cycle. All the $n$-cycles in $S_n$ are
 the Coxeter elements in the dual system $(S_n, T)$ where $T$ is the
 set of reflections, that is the conjugacy class of transpositions.

 The partial order $\leq_T$ on the symmetric group $S_n$ presented in
 Section~\ref{a4-c3-c13-braid-groups} can be generalized to all the
 dual Coxeter systems $(W,T)$. We consider the Cayley graph
 ${\rm CG}_T(W)$ of the group $W$ with respect to the generating set
 $T$.  For $u,v \in W$ we declare $u \leq_T v$ if there is a geodesic
 path in the Cayley graph connecting the identity to $v$ and passing
 through $u$. This partial order is also called the \emph{absolute
   order} on $W$.
 
 We also introduce a length function $l_T$ on $W$: for $u \in W$ we
 define $l_T(u) = k$ if there is a geodesic path from the identity to
 $u$ of length $k$ in the Cayley graph. Notice, if $l_T(u) = m$ then
 $u$ is the product of $m$ reflections, that is $u = t_1\cdots t_m$
 with $t_i \in T$, and there is no shorter factorisation of $u$ in a
 product of reflections. In this case we say that $u = t_1\cdots t_m$
 is a \emph{$T$-reduced factorisation}\index{factorisation!$T$-reduced}
 of $u$.  In particular, if $u \leq_T v$, then there are
 $k, m \in \NN$ with $k \leq m$ and reflections $t_1, \ldots , t_m$ in
 $T$ such that $u = t_1 \cdots t_k$ and $v= t_1 \cdots t_m$. Thus
 $$u \leq_T v~\mbox{if and only if}~l_T(u)+l_T(u^{-1}v) = l_T(v).$$ 
 
 \begin{definition}
   For a dual Coxeter system $(W,T)$ and a Coxeter element $c$ in $W$
   the set of \emph{non-crossing
     partitions}\index{partition!non-crossing partition!in a Coxeter system} is
 	$$\operatorname{NC}(W,c) = \{u \in W~|~u \leq_T c\}.$$
 \end{definition}
 
 This definition is conform with the definition in type $A_n$,
 see~Fact~\ref{a4-c3-c13-set}.
 
 The length function $l_T$ yields a grading on $\operatorname{NC}(W,c)$ and the map
 $$d: \operatorname{NC}(W,c) \rightarrow \operatorname{NC}(W,c),~ x \mapsto x^{-1}c$$
 a duality on $\operatorname{NC}(W,c)$ that inverses the order
 relation.
 
 This implies the following.
 \begin{fact}
 	$\operatorname{NC}(W,c)$ is a poset that is 
 	\begin{itemize}
 		\item graded
 		\item selfdual
 		\item ~\cite{a4-c3-a4-c3-c13-BrWa2,a4-c3-c13-Dual} a
                  lattice if $W$ is spherical.
 	\end{itemize}
 \end{fact}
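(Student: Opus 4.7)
The plan is to handle the three parts of the Fact in order of increasing depth.

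First, the poset structure is inherited from $\leq_T$ on $W$. For gradedness, I would use the length function $l_T$ as the rank. If $u <_T v$ in $\operatorname{NC}(W,c)$ and $u^{-1}v = t_1\cdots t_k$ is a $T$-reduced factorisation, the characterisation $u \leq_T v \iff l_T(u)+l_T(u^{-1}v) = l_T(v)$ forces each partial product $ut_1\cdots t_i$ to lie between $u$ and $v$ in $\operatorname{NC}(W,c)$ with $l_T(ut_1\cdots t_i) = l_T(u) + i$. Hence if $v$ covers $u$ then $k=1$, so covers correspond exactly to right-multiplication by a single reflection and $l_T$ is a rank function.

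Second, for self-duality I would define $d \colon \operatorname{NC}(W,c) \to \operatorname{NC}(W,c)$ by $d(x) := x^{-1}c$. Since $T$ is closed under conjugation and its elements are involutions, $l_T$ is invariant under inversion and conjugation; a short length-chase then shows $d$ takes $\operatorname{NC}(W,c)$ to itself. Its inverse is $x \mapsto cx^{-1}$, so $d$ is a bijection; equivalently, $d \circ d$ is conjugation by $c$, which is an order-preserving automorphism of $\operatorname{NC}(W,c)$. To see that $d$ reverses the order, fix $u \leq_T v \leq_T c$ and combine the identities $l_T(u)+l_T(u^{-1}c)=l_T(c)$, $l_T(v)+l_T(v^{-1}c)=l_T(c)$, $l_T(u)+l_T(u^{-1}v)=l_T(v)$ to obtain $l_T(v^{-1}c)+l_T(u^{-1}v) = l_T(u^{-1}c)$. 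Since $(v^{-1}c)^{-1}(u^{-1}c) = c^{-1}(vu^{-1})c$ has length $l_T(u^{-1}v)$, this is exactly the condition $v^{-1}c \leq_T u^{-1}c$.

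The lattice property when $W$ is spherical is the main obstacle, and I would follow the geometric approach of Brady--Watt~\cite{a4-c3-a4-c3-c13-BrWa2} (a parallel argument due to Bessis appears in~\cite{a4-c3-c13-Dual}). The crucial input is that $W$ acts faithfully as a finite reflection group on the vector space $V$; Carter's lemma then yields $l_T(w) = \operatorname{codim} \operatorname{Fix}(w)$ together with the fact that the roots appearing in any $T$-reduced factorisation of $w$ form a basis of the moved subspace $\operatorname{Mov}(w) = \operatorname{Fix}(w)^{\perp}$. This gives a geometric handle on $\leq_T$: from a reduced factorisation of $c$ one extracts a matroid-like structure on subsets of reflections, and the strategy is to use this to show that for any $u, v \in \operatorname{NC}(W,c)$ the set of common lower bounds has a unique maximum, yielding the meet $u \wedge v$. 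Since $W$ is finite, $\operatorname{NC}(W,c)$ is a finite poset with maximum element $c$, so existence of pairwise meets implies existence of joins via $u \vee v = \bigwedge \{w \in \operatorname{NC}(W,c) \mid u, v \leq_T w\}$, completing the proof.
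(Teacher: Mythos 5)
Your proposal is correct and follows exactly the route the paper indicates: gradedness via the length function $l_T$, self-duality via the map $x \mapsto x^{-1}c$ (the paper merely asserts these, while you supply the length-chase details, which check out), and the lattice property delegated to Brady--Watt and Bessis. No discrepancies to report.
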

 
 The number of elements in $\operatorname{NC}(W,c)$ in a finite dual Coxeter system  of type $X$ 
 is the generalised Catalan number\index{Catalan number!generalised Catalan number} of type $X$.  In types $B_n$ and $D_n$ there are
 also nice geometric models for the posets of non-crossing partitions.
 
 Note that in a spherical Coxeter system always
 $T \subseteq \operatorname{NC}(W,c)$.
 
 There is also a presentation of $W$ with generating set $T$
 \cite{a4-c3-c13-Dual}.  The relations are the so called \emph{dual
   braid relations} with respect to a Coxeter element $c \in W$:
 $$
\mbox{ for every}~ s,t,t^\prime \in T~\mbox{set}~ st = t^\prime s~\mbox{  whenever}
$$ 
$$\mbox{ the relation }~st = t^\prime s ~\mbox{holds in $W$ and}~ st \leq_T c.
$$
  
 The Matsumoto property means if we have for some $w \in W$ two
 shortest factorisations as products of elements of $S$, or
 equivalently two geodesic paths from $id$ to $w$ in the Cayley graph
 ${\rm CG}_S(W)$, then we can transform one factorisation or path into
 the other one just by applying braid relations; that is $W$ has a
 group presentation as given in
 Definition~\ref{a4-c3-c13-artin-group}.
 
 The \emph{dual Matsumoto property}\index{dual Matsumoto property} for
 a Coxeter element $c \in W$ is the statement that if we have two
 shortest factorisations
 $$c = t_1 \cdots t_m = u_1 \cdots u_m ~\mbox{with}~ t_i, u_i \in T$$
 as products of elements of $T$, that is two $T$-reduced
 factorisations of $c$ in $W$, then one factorisation can be
 transformed into the other one just by applying dual braid relations.
 It follows that the dual Matsumoto property holds for $c$, since
 $$\langle T~|~~\mbox{dual braid relations}\rangle$$ is a presentation of $W$.
 
 We obtain the dual Matsumoto property for an arbitrary element
 $w \in W$ by replacing $c$ by $w$ in the definition of the dual braid
 relations and of the dual Matsumoto property above.
 
 For an element $w \in W$, let  
 $$\operatorname{Red}_T(w) = \{(t_1, \ldots, t_m)~|~t_i \in T~\mbox{and}~ w = t_1 \cdots t_m~\mbox{is $T$-reduced}\}.$$
 
 The dual Matsumoto property for $w \in W$ is equivalent to the
 transitive \emph{Hurwitz action}\index{Hurwitz action} of the braid
 group $\cB_{l_T(w)}$ on the set of $T$-reduced factorisations
 $\operatorname{Red}_T(w)$ of $w$.  For the braid
 $\sigma_i \in \cB_{l_T(w)}$, see
 Fact~\ref{a4-c3-c13-artin-presentation-braid-groups}, the action is
 given by
 $$\sigma_i(t_1, \ldots , t_n) = (t_1, \ldots ,t_{i-1}, t_i^{-1}
 t_{i+1}{t_i}, t_i, t_{i+2}, \ldots , t_n).$$
 We will discuss this action in more detail in the next section.
 
 The dual approach can also be applied to Artin groups; given a
 Coxeter system $(W,S)$, we will denote the corresponding Artin group
 by $\cA(W,S)$. If in the following the Coxeter system $(W,S)$ is of
 type $X$, then we abbreviate $\cA(W,S)$ either by $\cA(W)$ or by
 $\cA_X$.  Further we take a copy $S_a$ of $S$ in $\cA(W,S)$ and write
$$
\cA(W,S) := \langle S^{}_a~|~ (s^{}_1)^{}_a(s^{}_2)^{}_a (s^{}_1)^{}_a 
\cdots = (s^{}_2)^{}_a (s^{}_1)^{}_a (s^{}_2)^{}_a 
\cdots ~\mbox{for}~s^{}_1, s^{}_2 \in S \rangle
$$  
in order to distinguish between $W$ and $\cA(W)$. We call an Artin group 
$\cA (W)$ \emph{spherical} if the Coxeter group is spherical.  
 And in the rest of this section, we always consider spherical Artin groups.
 
 Notice that the Matsumoto property implies that one can lift every
 $w \in W$ to an element in $\cA(W)$ just by mapping $w$ to
 $(s_1)_a \cdots (s_k)_a \in \cA_W$ whenever $w = s_1 \cdots s_k$ is a
 reduced factorisation of $w$ into elements of $S$. We denote this
 section of $W$ in $\cA(W)$ by $\cW$.
 
 The non-crossing partitions are a good tool for the better
 understanding of the spherical Artin groups; for instance they can be
 used to construct a finite simplicial classifying space for the
 spherical Artin groups
 (see~Section~\ref{a4-c3-c13-classifingy-space-for-braid-group}), or
 to solve the word or the conjugacy problem in them, see
 \cite{a4-c3-c13-BrWa,a4-c3-c13-Dual}.
 
 The basic idea of this solution of the word and the conjugacy problem
 in the spherical Artin group $\cA(W)$ is to give a new presentation
 of $\cA(W)$ as follows.  Let $\operatorname{NC}(W,c)_a$ be a copy of
 the set of non-crossing partitions $\operatorname{NC}(W,c)$ with
 respect to a standard Coxeter element $c$, that is there is a
 bijection
 $$a: \operatorname{NC}(W,c) \rightarrow \operatorname{NC}(W,c)_a.$$
 Then the new generating set is $\operatorname{NC}(W,c)_a$; and the
 new relations are the expressions $(w_1)_a \cdots (w_r)_a$ whenever
 $w_1, w_2, \ldots , w_r$ are the vertices of a circuit in
 $$[id, c]_{\leq_T} \subseteq {\rm CG}_{\operatorname{NC}(W,c)}(W).$$
 Then this presentation can be used to obtain a new normal form for
 the elements in $\cA(W)$ \cite{a4-c3-c13-Dual}. Notice that this
 presentation generalises the presentation of the braid group given by
 Birman, Ko and Lee \cite{a4-c3-c13-BKL} to all the spherical Artin
 groups, see also Fact~\ref{a4-c3-c13-triangular-presentation} in
 Section~\ref{a4-c3-c13-classifingy-space-for-braid-group}.
 
 Next, we explain this new presentation. Denote the group given by the presentation above by $\cA(W,c)$.
 The strategy to prove that $\cA(W,c)$ and $\cA(W)$ are isomorphic is to use Garside theory.
 As a first step the presentation above can be transformed into a presentation 
 with set of generators a copy $T_a = \{t_a~|~t \in T\}$ of $T$ and set of relations the dual braid relations with respect to $c$.
 The next step is to consider the monoid $\cA(W,c)^*$ 
 generated by $T_a$ and the dual braid relations, and to show that this is a Garside monoid.
 Then using Garside theory  one shows that the group of fractions $\mathrm{Frac}(\cA(W,c)^*)$ of $\cA(W,c)^*$  equals $\cA(W,c)$.
 The last step is to prove that the group of fractions $\mathrm{Frac}(\cA(W,c)^*)$ and the Artin group $\cA(W)$ are isomorphic.
 
 \begin{theorem}[{\cite{a4-c3-c13-Dual}}]\label{a4-c3-c13-NewPresentation}
 	Let $\cA_W$ be a spherical Artin group. Then,
 	$$\cA_W \cong \langle T_a~ |~ t_a t'_a= (tt't)_a t_a~\text{if } t, t' \in T~\mbox{and}~ tt'\leq_T c\rangle.$$
 \end{theorem}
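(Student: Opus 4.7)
The plan is to follow the Garside-theoretic strategy outlined just before the statement, producing mutually inverse homomorphisms between $\cA_W$ and the group $G := \langle T_a \mid t_a t'_a = (tt't)_a\, t_a \text{ whenever } tt' \leq_T c\rangle$ defined by the right-hand side of the theorem.

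First I would construct a map $\phi\colon G \to \cA_W$. Every reflection $t \in T$ may be written as $t = wsw^{-1}$ with $s \in S$ and $w \in W$; set
\[
  \phi(t_a) := \widetilde{w}\, s_a\, \widetilde{w}^{-1},
\]
where $\widetilde{w} \in \cW$ is the Matsumoto lift of $w$. Two points need to be verified: independence of the choice of conjugating expression, and the fact that the defining dual braid relations hold in $\cA_W$ after applying $\phi$. Both reduce to identities inside rank-$2$ standard parabolic subgroups of $\cA_W$, which are themselves dihedral-type Artin groups; there the verification is a direct computation using the classical braid relations together with the compatibility of the Matsumoto section with conjugation by reflections.

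The harder direction is to build an inverse $\psi\colon \cA_W \to G$, and the principal obstacle is establishing a Garside structure on the positive monoid
\[
  M := \langle T_a \mid t_a t'_a = (tt't)_a\, t_a \text{ whenever } tt' \leq_T c \rangle^{+}.
\]
Here I would proceed in three stages. First, show that $M$ is cancellative and admits compatible left and right lattice structures on divisibility. Second, exhibit an element $\Delta \in M$ representing the Coxeter element $c$, and show that the poset of left divisors (and, symmetrically, of right divisors) of $\Delta$ is isomorphic to $\operatorname{NC}(W,c)$, while conjugation by $\Delta$ permutes the atom set $T_a$; this combines the self-duality of $\operatorname{NC}(W,c)$ under $x \mapsto x^{-1}c$ with the transitive Hurwitz action on $\operatorname{Red}_T(c)$, which supplies exactly the dual Matsumoto property needed to identify words in the atoms. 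Third, conclude that $M$ is a Garside monoid, whence standard Garside theory (Dehornoy--Paris) gives that $M$ embeds into its group of fractions, and that this group of fractions is presented exactly as $G$.

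With $M$ Garside, the atoms $s_a$ for $s \in S \subseteq T$ generate $G$, and it remains to define $\psi$ by sending each classical Artin generator $s_a \in \cA_W$ to the corresponding atom $s_a \in G$ and checking the classical braid relation $\underbrace{s_a t_a s_a \cdots}_{m_{st}} = \underbrace{t_a s_a t_a \cdots}_{m_{st}}$ in $G$. Inside the rank-$2$ parabolic submonoid $\langle s_a, t_a \rangle \subseteq M$ the Garside element is the corresponding dihedral Coxeter element, and the desired equality expresses the uniqueness, up to the rank-$2$ Hurwitz action, of its maximal positive factorisation; this uniqueness follows from the previously established identification of divisor lattices with non-crossing partitions. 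Finally one verifies that $\phi \circ \psi$ and $\psi \circ \phi$ fix the generating sets $S_a$ and $T_a$ respectively, completing the proof.
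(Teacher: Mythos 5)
Your proposal follows exactly the Garside-theoretic route that the paper itself sketches (and attributes to Bessis): pass to the positive monoid on $T_a$ with the dual braid relations, prove it is a Garside monoid whose lattice of divisors of the Garside element realises $\operatorname{NC}(W,c)$ --- with the transitive Hurwitz action on $\operatorname{Red}_T(c)$, i.e.\ the dual Matsumoto property, as the decisive ingredient --- and then identify its group of fractions with $\cA_W$. This matches the paper's argument in both strategy and key lemmas, so no further comparison is needed.
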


 Note also that a basic ingredient in the proof of
 Theorem~\ref{a4-c3-c13-NewPresentation} is the dual Matsumoto
 property for $c$, that is the transitivity of the Hurwitz action of
 the braid group $\cB_{l_T(c)}$ on $\operatorname{Red}_T(c)$.
  
 The isomorphism between $\cA(W,c)$ and $\cA_W$ given by Bessis is
 difficult to understand explicitly.  So an immediate question is what
 the elements of $\operatorname{NC}(W,c)_a$ are expressed in the
 generating set $S_a$?
 
 The rational permutation braids, that is, the elements ${ x}{y}^{-1}$
 where ${x}, {y} \in \cW$, are also called \emph{Mikado
   braids}\index{Mikado braid} as they satisfy in type $A_{n-1}$ a
 topological condition and are therefore easy to recognise. This
 condition on an element in the Artin group $\cA(W)$ of type
 $A_{n-1}$, that is on a braid in the braid group $\cB_n$, is that we
 can lift and remove continuously one strand after the next of the
 braid without disturbing the remaining strands until we reach an
 empty braid \cite{a4-c3-c13-DG}.
 
 \begin{theorem}\label{a4-c3-c13-Mikado}
   If $\cA_W$ is spherical Artin group and $c \in W$ a standard
   Coxeter element, then the dual generators of $\cA(W,c)$, that is
   the elements of $\operatorname{NC}(W,c)_a$, are Mikado braids in
   $\cA_W$.
 \end{theorem}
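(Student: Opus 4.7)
The plan is to prove, by induction on the $T$-length $l_T(u)$, that under the Bessis isomorphism $\cA(W,c) \cong \cA_W$ of Theorem~\ref{a4-c3-c13-NewPresentation}, every dual generator $u_a$ with $u \in \operatorname{NC}(W,c)$ maps to an element of the form $\mathbf{x}\mathbf{y}^{-1}$ with $\mathbf{x}, \mathbf{y} \in \cW$. The base case $u = 1$ is immediate, since $1 \in \cW$.

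The heart of the argument is the case of a single reflection $t \leq_T c$. I would choose $w \in W$ and $s \in S$ with $t = wsw^{-1}$ and $l(ws) = l(w)+1$; such a pair always exists since every reflection is $W$-conjugate to a simple reflection, and one can take $w$ of minimal length conjugating $s$ to $t$. Then $\mathbf{w} \in \cW$, and moreover $\mathbf{w}s_a \in \cW$ because $l(ws) = l(w)+1$ ensures that this positive word is reduced. Consequently
\[
  t_a \;=\; (\mathbf{w}s_a)\,\mathbf{w}^{-1}
\]
is manifestly a Mikado braid. To see that this is the actual image of $t_a$ under the Bessis isomorphism, one extends $t$ to a $T$-reduced factorisation $c = t \cdot t_2 \cdots t_n$ and uses the dual Matsumoto property (transitivity of the Hurwitz action on $\operatorname{Red}_T(c)$) to pass between this and a reduced $S$-expression of $c$, which pins down the image.

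For the inductive step, let $u \in \operatorname{NC}(W,c)$ with $l_T(u) = k \geq 2$ and pick a reflection $t$ such that $u' := ut^{-1}$ satisfies $u' \leq_T u$ and $l_T(u') = k-1$. The dual braid relations give $u_a = u'_a \cdot t_a$ in $\cA(W,c)$, and by induction both factors are Mikado. The remaining task is to combine them into a single Mikado expression, which does \emph{not} follow formally, because the Mikado set $\cW \cdot \cW^{-1}$ is not a submonoid of $\cA_W$. The key observation I would exploit is that if all choices are aligned with a single $T$-reduced factorisation $c = t_1 \cdots t_n$ arising from a reduced $S$-expression $c = s_{i_1}\cdots s_{i_n}$ via the conjugation formula $t_j = s_{i_1}\cdots s_{i_{j-1}}\, s_{i_j}\, s_{i_{j-1}}\cdots s_{i_1}$, then the Artin-level lifts satisfy the telescoping identity
\[
  \mathbf{t}_k\mathbf{t}_{k-1}\cdots\mathbf{t}_1 \;=\; s_{i_1,a}\cdots s_{i_k,a} \;\in\; \cW.
\]
This disposes of those $u$ that appear as such telescoping prefixes; for a general $u$, the Hurwitz action is used to transport the chosen factorisation to one in which $u$ does appear as a prefix, and one tracks the induced change on the Mikado expression.

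The main obstacle is precisely this compatibility step: one must organise, for \emph{all} $u \leq_T c$ simultaneously, the various Mikado presentations of the individual $(t_j)_a$ so that partial products collapse into Mikado form rather than drifting out of $\cW\cdot\cW^{-1}$. This coherence ultimately rests on Bessis's identification of $T$-reduced factorisations of $c$ with simple systems compatible with $c$, combined with Hurwitz transitivity; handling the bookkeeping uniformly across all prefixes is the delicate technical point that distinguishes the result from a soft induction.
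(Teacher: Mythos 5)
Your proposal has a genuine gap, and it sits exactly where you say it does: the ``coherence/bookkeeping'' step in the induction is not a technicality to be deferred but is the entire content of the theorem. The base case is fine: a reflection $t\leq_T c$ can be written as $wsw^{-1}$ with $l(ws)=l(w)+1$, so $t_a\mapsto \mathbf{w}s_a\mathbf{w}^{-1}=(\mathbf{w}s_a)\,\mathbf{w}^{-1}$ with both factors coming from $\cW$; and your telescoping identity for the prefixes of the one distinguished $T$-reduced factorisation of $c$ obtained by unfolding a reduced $S$-word is also correct. But a general $u\in\operatorname{NC}(W,c)$ is not such a prefix, and after you apply Hurwitz moves to make it one, the individual reflections of the transported factorisation are no longer the inversion-type conjugates $s_{i_1}\cdots s_{i_{j-1}}s_{i_j}s_{i_{j-1}}\cdots s_{i_1}$, so the telescoping collapse into $\cW$ fails. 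You would then need to control how each Hurwitz move distorts an expression of the form $\mathbf{x}\mathbf{y}^{-1}$, and, as you yourself observe, $\cW\cdot\cW^{-1}$ is not stable under the operations that arise. No uniform, case-free argument closing this gap is currently known, so the induction as proposed does not go through.

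This is reflected in how the result is actually established: the paper's proof is a citation, and the underlying arguments in Digne--Gobet and Baumeister--Gobet are type-by-type rather than a single induction. In type $A_{n-1}$ one uses the topological characterisation of Mikado braids (strands can be lifted off one at a time); type $B_n$ is reduced to type $A_{2n-1}$ via an embedding of Artin groups; type $D_n$ resisted this and required the separate embedding of $\cA_{D_n}$ into the quotient $\overline{\cA_{B_n}}$ described in the paper; the exceptional types are treated separately; and Licata--Queffelec give yet another proof in types A, D, E via categorification. So your strategy, if it could be completed, would constitute a genuinely new and more uniform proof --- but as written it reduces the theorem to an unresolved compatibility problem rather than solving it.
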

 \begin{proof}
   This is \cite{a4-c3-c13-DG} for those groups of type different from
   $D_n$ and \cite{a4-c3-c13-BG} for those of type $D_n$.
 \end{proof}
 
 Notice that Licata and Queffelec \cite{a4-c3-c13-LQ} have a proof of
 Theorem~\ref{a4-c3-c13-Mikado} in types A,D,E with a different
 approach using categorification.
 
 In order to be able to find a topological property that characterises
 the Mikado braids as in type $A_{n-1}$ topological models for the
 series of spherical Artin groups $\cA_W$ are needed.  There is an
 embedding of Artin groups of type $B_n$ into those of type
 $A_{2n-1}$.  The situation in type $D_n$ is as follows
 \cite{a4-c3-c13-BG}: The root system of type $D_n$ embeds into the
 root system of type $B_n$, which implies that the Coxeter system of
 type $D_n$ is a subsystem of that one of type $B_n$. But there is not
 an embedding of the Artin group of type $D_n$ into that one of type
 $B_n$ that satisfies a certain natural condition.  Let $(W, S)$ be a
 Coxeter system of type $B_n$. Then there is precisely one element
 $s \in S$ that is a reflection corresponding to a short root. Let
 $$\overline{\cA_{B_n}} := \cA_{B_n}/\ll s^2 \gg,$$
 where $\ll s^2 \gg$ is the normal closure of $s^2$ in  $\cA_{B_n}$.
 Then the following holds.
 
 \begin{proposition}[{\cite[Lem.~2.5 and Prop.~2.7]{a4-c3-c13-BG}}]
   There is a natural embedding of $\cA_{D_n}$ onto an index-$2$
   subgroup of $\overline{\cA_{B_n}}$. More precisely, there is the
   following commutative diagram
 	\[
 	\begin{CD}
          {\cA_{B_n}} @>\pi>> {\overline{\cA_{B_n}}}   @<<< {\langle t_1, \dots, t_{n}\rangle} @<\cong<< {\cA_{D_n}}\\
          @|                    @VV\pi_{\overline\cB}V @V{\pi_\cD}VV   \\
          {\cA_{B_n}} @>>\pi_\cB> {W_{B_n}} @<<< {W_{D_n}}
 	\end{CD}
 	\]
 \end{proposition}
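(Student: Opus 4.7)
The strategy is to exhibit explicit lifts $t_1,\dots,t_n\in\overline{\cA_{B_n}}$ of the $D_n$ simple reflections, verify that they satisfy the $D_n$ braid relations so as to obtain a homomorphism $\phi\colon\cA_{D_n}\to\overline{\cA_{B_n}}$, and then separately show that $\phi$ is injective and that its image is an index-$2$ subgroup. Commutativity of the right square of the diagram will be built into the construction of $\phi$.

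Write $a_1,\dots,a_n$ for the standard Artin generators of $\cA_{B_n}$, with $a_n$ corresponding to the unique short simple root. In $\overline{\cA_{B_n}}=\cA_{B_n}/\ll a_n^2\gg$ I would set $t_i:=a_i$ for $i\leqslant n-1$ and $t_n:=a_na_{n-1}a_n$; modulo $a_n^2=1$ these lift the $D_n$ simple reflections inside $W_{B_n}$. A direct computation with the $B_n$ relations then shows that the $t_i$ satisfy the $D_n$ braid relations: commutations of $t_n$ with $t_i$ for $i\leqslant n-3$ are immediate, the relation $t_nt_{n-1}=t_{n-1}t_n$ uses the length-$4$ relation $a_{n-1}a_na_{n-1}a_n=a_na_{n-1}a_na_{n-1}$, and $t_nt_{n-2}t_n=t_{n-2}t_nt_{n-2}$ reduces, after commuting $a_{n-2}$ past $a_n$ and cancelling $a_n^2$, to the $B_n$ braid relation $a_{n-2}a_{n-1}a_{n-2}=a_{n-1}a_{n-2}a_{n-1}$. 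This produces $\phi$ with image $\langle t_1,\dots,t_n\rangle$, and by construction $\pi_{\overline\cB}\circ\phi$ sends each $t_i$ to the corresponding simple reflection of $W_{D_n}\subseteq W_{B_n}$, giving commutativity of the right square.

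For the index claim I would define $\varepsilon\colon\overline{\cA_{B_n}}\to\ZZ/2$ by $\varepsilon(a_n)=1$ and $\varepsilon(a_i)=0$ for $i<n$. This descends from $\cA_{B_n}$: the length-$3$ braid relations all involve generators on which $\varepsilon$ vanishes, the length-$4$ relation $a_{n-1}a_na_{n-1}a_n=a_na_{n-1}a_na_{n-1}$ is balanced in $a_n$, and $\varepsilon(a_n^2)=0$. Each $t_i$ lies in $\ker\varepsilon$, so $\operatorname{Im}\phi\subseteq\ker\varepsilon$. Conversely, since $a_n=a_n^{-1}$ in the quotient, every element admits a representative of the form $w_0a_nw_1a_n\cdots a_nw_k$ with each $w_j$ a word in $a_1^{\pm1},\dots,a_{n-1}^{\pm1}$; such an element lies in $\ker\varepsilon$ exactly when $k$ is even, and one then groups the representation as $w_0(a_nw_1a_n)w_2(a_nw_3a_n)\cdots$. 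Since $a_n$ commutes with $a_i$ for $i\leqslant n-2$ and conjugates $a_{n-1}$ to $t_n$, each bracketed factor lies in $\langle t_1,\dots,t_n\rangle$, as does every intervening $w_{2j}$. Hence $\operatorname{Im}\phi=\ker\varepsilon$ has index exactly $2$.

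The main obstacle will be injectivity of $\phi$. Commutativity of the diagram immediately reduces the problem to showing that $\phi$ is injective on the pure Artin subgroup $P_{D_n}=\ker\pi_\cD$, since any element of $\ker\phi$ must project to the identity in $W_{D_n}\hookrightarrow W_{B_n}$. My approach would be topological: $\cA_{D_n}$ is the fundamental group of the orbit space $(\CC^n\setminus\bigcup\cH_{D_n})/W_{D_n}$ of the $D_n$-hyperplane complement, while $\overline{\cA_{B_n}}$ is the fundamental group of the double cover of the analogous $B_n$ orbit space obtained by trivialising the meridians around the short-root hyperplanes $\{z_i=0\}$. Identifying the index-$2$ subgroup $\ker\varepsilon$ with the further double cover that recovers the $D_n$ orbit space, and checking that this identification agrees with $\phi$ on generators, yields injectivity. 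Making the covering-space correspondence precise while keeping track of the interplay between the $W_{B_n}$- and $W_{D_n}$-actions is the delicate point, and is the content of the references cited in the proposition.
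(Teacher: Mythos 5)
The paper offers no argument for this proposition at all --- it is imported verbatim from \cite[Lem.~2.5 and Prop.~2.7]{a4-c3-c13-BG} --- so your proposal can only be measured against the literature, not against an internal proof. The algebraic half of what you do is correct and essentially complete: the lifts $t_i$ are the right ones, the verification of the $D_n$ relations goes through (and it is worth noting that $t_nt_{n-2}t_n=t_{n-2}t_nt_{n-2}$ is the \emph{only} relation that genuinely needs $a_n^2=1$, which is exactly why the homomorphism exists only after passing to $\overline{\cA_{B_n}}$ and not into $\cA_{B_n}$ itself), the character $\varepsilon$ is well defined (most cleanly because it is the composite $\overline{\cA_{B_n}}\to W_{B_n}\to W_{B_n}/W_{D_n}\cong\ZZ/2$), and your normal-form argument correctly identifies $\langle t_1,\dots,t_n\rangle$ with $\ker\varepsilon$, of index two.

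The genuine gap is injectivity, and your covering-space sketch as literally written would not close it. Write $M_{B_n}\subseteq M_{D_n}\subseteq\CC^n$ for the complements of the $B_n$- and $D_n$-reflection arrangements. The subgroup $\ker\varepsilon\le\overline{\cA_{B_n}}$ classifies an \emph{unbranched} double cover, whereas the map of orbit spaces $M_{D_n}/W_{D_n}\to M_{D_n}/W_{B_n}$ is \emph{branched} along the image of the short-root hyperplanes, since the reflection $z_i\mapsto -z_i$ lies in $W_{B_n}\setminus W_{D_n}$ and fixes points there; so ``the further double cover'' you invoke is not the $D_n$ orbit space. Relatedly, filling the short-root discriminant back in kills the normal closure of $a_n$ itself, not of $a_n^2$ (a full loop around that discriminant component is a single Artin generator, since the generator is a half-loop upstairs), so the space you describe has fundamental group $\cA_{B_n}/\ll a_n\gg$ rather than $\overline{\cA_{B_n}}$. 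The fix is to regard $M_{D_n}/W_{B_n}$ as an orbifold with $\ZZ/2$ cone locus along the short-root discriminant: then $\overline{\cA_{B_n}}$ is its orbifold fundamental group, the branched cover by the manifold $M_{D_n}/W_{D_n}$ is an honest degree-$2$ orbifold cover classified by $\ker\varepsilon$, and Brieskorn's theorem identifies $\pi_1(M_{D_n}/W_{D_n})$ with $\cA_{D_n}$; carrying this out is precisely the content of the cited results, so you have located the hard step correctly but not supplied it. A purely algebraic alternative that stays inside your framework: your normal form is one step away from a Reidemeister--Schreier presentation of $\ker\varepsilon$ with transversal $\{1,a_n\}$ (Schreier generators $a_1,\dots,a_{n-1}$ and $a_na_ia_n$, which reduce to $t_1,\dots,t_n$), and matching the rewritten relators with the $D_n$ Artin presentation by Tietze transformations proves injectivity without any topology.
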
 
 
 The embedding of $\cA_{D_n}$ into $\overline{\cA_{B_n}}$ makes it
 possible to associate braid pictures to the $\cA_{D_n}$-elements and
 to characterise Mikado braids in type $D_n$ geometrically.
 
 \begin{figure}[h!]
 	%
 	%
 	%
 	%
 	%
 	%
 	%
 	%
 	%
 	%
 	%
 	%
 	%
 	%
 	%
 	%
 	%
 	%
 	%
 	%
 	%
 	%
 	%
 	%
 	%
 	%
 	%
 	\begin{center}
 		\includegraphics[scale=0.5]{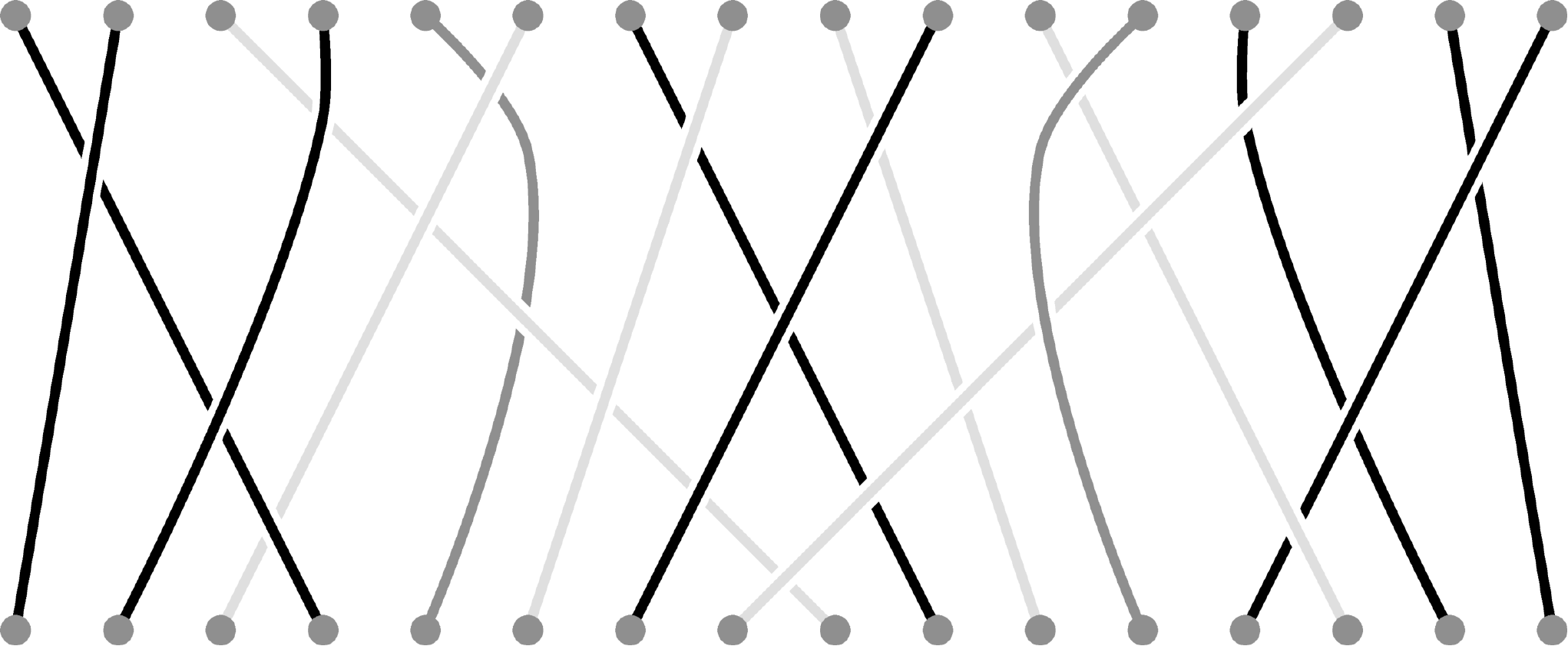}
 	\end{center}
 	\caption{A Mikado braid in $\cA_{B_8}$ whose image 
 		in $\overline{\cA_{B_8}}$ is a Mikado braid in
 		$\cA_{D_8}$.}
 \end{figure}
 
 A reader familiar with Hecke algebras will find it interesting that
 the Mikado braids satisfy a positivity property involving the
 canonical Kazhdan-Lusztig basis $\cC:= \{C_w~|~w \in W\}$ of the
 Iwahori--Hecke algebra\index{Iwahori-Hecke algebra} $H(W)$ related to
 the Coxeter system $(W,S)$, see
 \cite{a4-c3-c13-KL,a4-c3-c13-DG}. There is a natural group
 homomorphism $a: \cA_W\longrightarrow H(W)^\times$ from $\cA_W$ into
 the multiplicative group $H(W)^\times$ of $H(W)$.  The image of a
 Mikado braid, that is of a rational permutation braid, in
 $H(W)^\times$ has as coefficients Laurent polynomials with
 non-negative coefficients when expressed in the canonical basis $\cC$
 by a result by Dyer and Lehrer (see
 \cite{a4-c3-c13-DL,a4-c3-c13-DG}).
 
 \section{The Hurwitz action}
\textit{Hurwitz action in Coxeter systems.}  Deligne showed the dual
 Matsumoto property in spherical Coxeter systems, that is he showed that
 the Hurwitz action of the braid group $\cB_{l_T(c)}$ on
 $\operatorname{Red}_T(c)$ is transitive for every Coxeter element $c$
 in $(W,S)$ \cite{a4-c3-c13-Del2}; and Igusa and Schiffler proved it
 for arbitrary Coxeter systems \cite{a4-c3-c13-IS}.  In
 \cite{a4-c3-c13-BDSW} a new, more general and first of all
 constructive proof of this property is given:
 
 \begin{theorem}[{\cite[Thm.~1.3]{a4-c3-c13-BDSW}}]\label{a4-c3-c13-HurwitzTrans}
   Let $(W,T)$ be a (finite or infinite) dual Coxeter system of finite
   rank $n$ and let $c = s_1 \cdots s_m$ be a parabolic Coxeter
   element in W. The Hurwitz action on $\operatorname{Red}_T(c)$ is
   transitive.
 \end{theorem}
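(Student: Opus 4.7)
The plan is to induct on the length $m = \ell_T(c)$. The cases $m\leqslant 1$ are trivial since $\operatorname{Red}_T(c)$ is then a singleton. For the inductive step, let $R = (t_1,\ldots,t_m)$ and $R' = (u_1,\ldots,u_m)$ be two $T$-reduced factorisations of the parabolic Coxeter element $c$. The whole argument reduces to the following \emph{exchange lemma}: for every reflection $u$ with $u \leq_T c$ and every $T$-reduced factorisation $(t_1,\ldots,t_m)$ of $c$, there is a Hurwitz-equivalent factorisation of the form $(u, v_2, \ldots, v_m)$.

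Granting the exchange lemma, I apply it with $u = u_1$ to move $R$ into the Hurwitz orbit of some $(u_1, v_2, \ldots, v_m)$. Stripping the common first factor then yields two $T$-reduced factorisations $(v_2,\ldots,v_m)$ and $(u_2,\ldots,u_m)$ of the element $c' := u_1^{-1} c$. Since $u_1 \leq_T c$ and $c$ is a parabolic Coxeter element, a standard fact from the dual approach to Coxeter groups guarantees that $c'$ is again a parabolic Coxeter element, of length $m-1$ and of rank $m-1$ in its own parabolic closure. The inductive hypothesis then supplies a braid $\beta \in \cB_{m-1}$ taking one stripped factorisation to the other, and this lifts along the natural embedding $\cB_{m-1} \hookrightarrow \cB_m$ as the stabiliser of the first strand, giving the required Hurwitz equivalence between $R$ and $R'$.

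The hard part—and the main obstacle—is the exchange lemma itself. I would prove it by a secondary induction on the smallest index $j$ such that $u \leq_T t_1 t_2 \cdots t_j$. The case $j=1$ forces $u = t_1$, which is trivial. For $j \geqslant 2$ we have $u \leq_T t_1 \cdots t_j$ but $u \not\leq_T t_1 \cdots t_{j-1}$, and the aim is to perform a single Hurwitz move in position $j-1$ or $j$ that strictly decreases this index, then iterate. Producing such a move is the combinatorial heart of the argument: one must show that the pair $(t_{j-1}, t_j)$ can always be Hurwitz-twisted in a way that exposes $u$ at an earlier position.

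To find the move, one works inside the parabolic closure $P$ of $\{t_1,\ldots,t_j, u\}$, which by the dual theory of Coxeter groups is itself generated (as a reflection subgroup) by a set of simple reflections, and inherits a dual Coxeter system of rank at most $j$. Within $P$, the minimality of $j$ localises the obstruction to a rank-$2$ dihedral situation involving $u$ and the prefix of $c$; in that dihedral sub-situation the Hurwitz move realising the exchange can be written down explicitly, since in a dihedral group every $T$-reduced factorisation of a Coxeter element can be cyclically rotated by a single Hurwitz braid. Propagating this local exchange back to the original factorisation gives the desired index-lowering move. The appeal to parabolic subgroups (and the fact that this works uniformly for finite \emph{and} infinite Coxeter systems, as long as $c$ is parabolic) is what makes the proof more general than earlier versions, and handling the infinite-rank dihedral contributions carefully is where I would expect the most technical care to be needed.
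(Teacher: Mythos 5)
First, a point of reference: the survey itself does not prove this theorem. It is quoted from \cite{a4-c3-c13-BDSW}, and the only indication given of the argument is that it is constructive and ``based on a study of the Cayley graphs $\mathrm{CG}_S(W)$ and $\mathrm{CG}_T(W)$'', together with the companion result (Theorem~\ref{a4-c3-c13-Factor}) that reflections occurring in a reduced $T$-factorisation of an element of a parabolic subgroup already lie in that subgroup. Your outer skeleton --- induction on $m=\ell_T(c)$, driven by an ``exchange lemma'' that exposes a prescribed reflection as the first letter of some factorisation in the Hurwitz orbit --- is the standard shape shared by essentially all known proofs, so there is no objection at that level.

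There are, however, two genuine gaps. The smaller one: after stripping $u_1$ you invoke ``a standard fact from the dual approach'' to conclude that $u_1^{-1}c$ is again a parabolic Coxeter element. For finite $W$ this is Brady--Watt/Bessis, but the whole point of the theorem is that $W$ may be infinite, and in that generality the statement ``$t\in T$, $t\leq_T c$ implies $t^{-1}c$ is a parabolic Coxeter element'' is not an independently available input --- it belongs to the same circle of results being established (compare Theorem~\ref{a4-c3-c13-Factor} and the prefix characterisation in \cite{a4-c3-a4-c3-c13-BGRW}, which the survey only states for spherical $W$), so as written the induction is circular. This is repairable: expose the \emph{simple} reflection $s_1$ of the defining factorisation $c=s_1\cdots s_m$ at the front of both $R$ and $R'$; then $s_1^{-1}c=s_2\cdots s_m$ is visibly a parabolic Coxeter element of length $m-1$. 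The serious gap is the exchange lemma itself, which you correctly identify as the entire content of the theorem but do not prove. The claim that minimality of $j$ ``localises the obstruction to a rank-$2$ dihedral situation'' is unsupported: the condition $u\leq_T t_1\cdots t_j$ constrains $u$ against the whole prefix, not against the pair $(t_{j-1},t_j)$, and no argument is given that a single Hurwitz move in positions $j-1,j$ lowers $j$; moreover the passage to the parabolic closure of $\{t_1,\ldots,t_j,u\}$ already presupposes Theorem~\ref{a4-c3-c13-Factor}, which the survey says is proved ``by the same methods''. This index-lowering step is exactly where \cite{a4-c3-c13-BDSW} do the real work, by playing the $S$-length geometry of $\mathrm{CG}_S(W)$ against the $T$-length geometry of $\mathrm{CG}_T(W)$; your sketch replaces that step with a heuristic that is not yet a proof.
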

 
 Theorem~\ref{a4-c3-c13-HurwitzTrans} is also more general then
 Theorem~1.4 in \cite{a4-c3-c13-IS}, as in \cite{a4-c3-c13-BDSW} dual
 Coxeter systems are considered while in \cite{a4-c3-c13-IS} Coxeter
 systems, and in general the set of Coxeter elements is in a dual
 system larger than that one in a Coxeter system.

 The proof of Thereom~\ref{a4-c3-c13-HurwitzTrans} is based on a study
 of the Cayley graphs ${\rm CG}_S(W)$ and ${\rm CG}_T(W)$.  Using the
 same methods one can also show that every reflection occurring in a
 reduced $T$-factorisation of an element of a parabolic subgroup $P$
 of $W$ is already contained in that parabolic subgroup.
 
 \begin{theorem}[{\cite[Thm.~1.4]{a4-c3-c13-BDSW}}]\label{a4-c3-c13-Factor} 
   Let $(W,S)$ be a (finite or infinite) Coxeter system, $P$ a
   parabolic subgroup and $w \in P$. Then
   $\operatorname{Red}_T(w) = \operatorname{Red}_{T \cap P}(w)$.
 \end{theorem}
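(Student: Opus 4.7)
The strategy is to combine two ingredients: (a) the existence of a $T$-reduced factorisation of $w$ all of whose reflections lie in $P$, and (b) the fact that any two $T$-reduced factorisations of $w$ generate the same reflection subgroup of $W$. Granting both, the theorem follows at once: by (a) we obtain a distinguished factorisation contained in $P$, whose generated subgroup is a subgroup of $P$; by (b) every other $T$-reduced factorisation of $w$ generates the same subgroup, so all of its reflections lie in $P$ and hence in $T \cap P$. The converse inclusion $\operatorname{Red}_{T \cap P}(w) \subseteq \operatorname{Red}_T(w)$ is then a by-product, since (a) forces $l_T(w) = l_{T \cap P}(w)$.

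I would first carry out (a). Since $P$ is itself a Coxeter group with reflection set $T \cap P$, what is needed is the non-trivial inequality $l_T(w) \geq l_{T \cap P}(w)$. I would prove this by induction on $l_T(w)$ using the Cayley-graph method developed for Theorem~\ref{a4-c3-c13-HurwitzTrans}. Given a $T$-geodesic from $e$ to $w$ in $\operatorname{CG}_T(W)$, the goal is to exhibit a local move that either shortens the path or shifts it one step closer to the subgraph $\operatorname{CG}_{T \cap P}(P) \subseteq \operatorname{CG}_T(W)$; iterating such moves yields a $T$-geodesic supported entirely in $P$ and therefore a $(T \cap P)$-reduced factorisation of $w$ of the same length.

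For (b) the key observation is that Hurwitz moves preserve the subgroup generated by the reflections of a factorisation, because replacing $(t_i, t_{i+1})$ by $(t_i t_{i+1} t_i^{-1}, t_i)$ (or its inverse) manifestly leaves $\langle t_1, \ldots, t_m \rangle$ unchanged. It therefore suffices to show that any two elements of $\operatorname{Red}_T(w)$ are connected by Hurwitz moves. Here I would invoke Dyer's theorem: the reflection subgroup $W' := \langle t_1, \ldots, t_m \rangle$ generated by any $T$-reduced factorisation of $w$ is itself a Coxeter group with reflection set $T \cap W'$, and inside $W'$ the element $w$ is a Coxeter element of rank $m$. Theorem~\ref{a4-c3-c13-HurwitzTrans} applied within the dual Coxeter system $(W', T \cap W')$ then yields Hurwitz transitivity on $\operatorname{Red}_{T \cap W'}(w) = \operatorname{Red}_T(w)$, as desired.

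The main obstacle is part (a). A naive induction on $l_T(w)$ stumbles precisely when every reflection that can be placed at the start of some $T$-reduced factorisation of $w$ fails to lie in $P$; ruling this out requires the delicate geodesic analysis in $\operatorname{CG}_T(W)$ from the proof of Theorem~\ref{a4-c3-c13-HurwitzTrans}, now adapted from parabolic Coxeter elements to arbitrary elements of the parabolic subgroup $P$. Part (b), by contrast, is essentially a formal consequence of Hurwitz transitivity once one has identified the correct ambient Coxeter group $W'$ via Dyer's theorem.
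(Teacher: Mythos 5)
Your ingredient (b) --- that any two $T$-reduced factorisations of $w$ generate the same reflection subgroup of $W$ --- is false, and this is where the proposal breaks down. Take $W$ the dihedral group of order $12$ (type $I_2(6)$) and $w$ the rotation by $120^\circ$. Then $l_T(w)=2$, and the elements of $\operatorname{Red}_T(w)$ are exactly the pairs of reflections whose axes meet at $60^\circ$; every such pair lies entirely in one of the two conjugacy classes of reflections, so the factorisations of $w$ generate two \emph{distinct} (non-parabolic) reflection subgroups of order $6$. The same example exposes the flaw in your derivation of (b): the element $w=t_1\cdots t_m$ is in general only a \emph{quasi}-Coxeter element of $W'=\langle t_1,\ldots,t_m\rangle$, not a Coxeter element, so Theorem~\ref{a4-c3-c13-HurwitzTrans} does not apply to $(W',T\cap W')$; moreover, by Theorem~\ref{a4-c3-c13-Quasi-Cox-Lattice} the Hurwitz action on $\operatorname{Red}_T(w)$ is transitive (for spherical $W$) precisely when $w$ is a \emph{parabolic} quasi-Coxeter element, which is essentially what you are trying to prove --- so the appeal to transitivity is circular as well as false for general $w$. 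Note also that the survey derives the statement ``if one reduced factorisation of $w$ generates a parabolic, then all of them do'' \emph{from} the present theorem, not the other way round.

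The real content of the theorem is exactly the step your strategy delegates to (b): one must control \emph{every} $T$-reduced factorisation of $w\in P$, not produce one good factorisation and transport it to the others by Hurwitz moves. Your part (a), the equality $l_T(w)=l_{T\cap P}(w)$ obtained by pushing a $T$-geodesic into ${\rm CG}_{T\cap P}(P)$, is in the spirit of the Cayley-graph analysis of \cite{a4-c3-c13-BDSW} that the paper alludes to, but on its own it only yields the easy inclusion $\operatorname{Red}_{T\cap P}(w)\subseteq\operatorname{Red}_T(w)$. For the essential reverse inclusion one must argue directly that each letter $t_i$ of an \emph{arbitrary} reduced factorisation $w=t_1\cdots t_m$ lies in $P$; there is no intermediate ``all factorisations generate the same subgroup'' lemma available to shortcut this.
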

 
 This basic fact was not known before and can be seen as a founding
 stone towards a general theory for `dual' Coxeter systems.

 \subsection*{Hurwitz action in the spherical Coxeter systems and quasi-Coxeter elements.}
In the rest of the section, $(W,T)$ is a finite dual Coxeter system.
 
 In order to understand the dual Coxeter systems $(W,T)$ one also
 needs to know for which elements in $W$ the Hurwitz action is
 transitive.  The answer to that question is as follows
 \cite{a4-c3-a4-c3-c13-BGRW}.
 
 A \emph{parabolic quasi-Coxeter element} is an element $w \in W$ that
 has a reduced factorisation into reflections such that these
 reflections generate a parabolic subgroup of $W$.
 
 Note if one reduced $T$-factorisation of $w \in W$ generates a
 parabolic subgroup $P$ then every reduced $T$-factorisation of $w$ is
 in $P$ by Theorem~\ref{a4-c3-c13-Factor}.  It also follows that every
 such factorisation generates $P$
 \cite[Thm.~1.2]{a4-c3-a4-c3-c13-BGRW}.
 
 If a factorisation of $w$ generates the whole group $W$, it is a
 \emph{quasi-Coxeter element}\index{Coxeter element!quasi-Coxeter element}. 
Clearly every Coxeter element is a quasi-Coxeter element.
 In type $A_n$ and $B_n$ every quasi-Coxeter element is already a
 Coxeter element. The smallest Coxeter system containing a proper
 quasi-Coxeter element is of type $D_4$.
 
 Now we can answer the question above.

  \begin{theorem}[{\cite[Thm.~1.1]{a4-c3-a4-c3-c13-BGRW}}]\label{a4-c3-c13-Quasi-Cox-Lattice} 
    Let $(W,S)$ be a spherical Coxeter system and let $w \in W$. The
    Hurwitz action is transitive on $\operatorname{Red}_T(w)$ if and
    only if $w$ is a parabolic quasi-Coxeter element.
 \end{theorem}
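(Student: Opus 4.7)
The plan is to prove the two implications separately, leveraging Theorem~\ref{a4-c3-c13-HurwitzTrans} (Hurwitz transitivity for Coxeter elements) and Theorem~\ref{a4-c3-c13-Factor} (that reduced $T$-factorisations of elements in a parabolic $P$ stay inside $P$).

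For the direction that a parabolic quasi-Coxeter element has transitive Hurwitz action, I would start with a reduced $T$-factorisation $w = t_1 \cdots t_k$ whose factors generate a parabolic subgroup $P$, and first show that $w$ is in fact a Coxeter element of the spherical Coxeter system $(P, T\cap P)$. Indeed, $P$ being generated by $k$ reflections forces $\mathrm{rk}(P)\leq k$, while $w\in P$ gives $\ell_{T\cap P}(w)\geq \ell_T(w)=k$, so $\mathrm{rk}(P)=k$ and $(t_1,\ldots,t_k)$ is a reduced factorisation of a Coxeter element of $P$. Theorem~\ref{a4-c3-c13-HurwitzTrans} applied to $(P,T\cap P)$ then yields transitivity on $\operatorname{Red}_{T\cap P}(w)$, and Theorem~\ref{a4-c3-c13-Factor} identifies this set with $\operatorname{Red}_T(w)$.

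For the converse, the starting observation is that the Hurwitz move $(t_1,\ldots,t_i,t_{i+1},\ldots,t_k)\mapsto(t_1,\ldots, t_i t_{i+1} t_i^{-1}, t_i,\ldots,t_k)$ preserves the subgroup generated by the factors. Hence, if the Hurwitz action on $\operatorname{Red}_T(w)$ is transitive, then the subgroup $G:=\langle t_1,\ldots,t_k\rangle$ is an invariant of $w$ alone, and the task reduces to showing that $G$ is a parabolic subgroup of $W$. I would then pass to the smallest parabolic subgroup $P$ containing $G$, which is the reflection subgroup generated by $\Phi\cap V_0$ where $V_0$ is the linear span of the roots of the reflections of $G$. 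By Theorem~\ref{a4-c3-c13-Factor}, $\operatorname{Red}_T(w)=\operatorname{Red}_{T\cap P}(w)$, and we are reduced to the statement that, inside the spherical Coxeter system $(P,T\cap P)$, Hurwitz transitivity on the reduced $T\cap P$-factorisations of $w$ forces $G=P$.

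The main obstacle is precisely this reduced claim: excluding the possibility that $G$ is a proper full-rank reflection subgroup of $P$. I would handle this by case analysis according to the Coxeter type of $P$. In types $A_n$ and $B_n$, the remark in the paragraph preceding Theorem~\ref{a4-c3-c13-Quasi-Cox-Lattice} says every quasi-Coxeter element is a Coxeter element, so no proper full-rank reflection subgroup with the required combinatorics exists and the claim is immediate. The genuine difficulty is in type $D_n$ (where proper quasi-Coxeter elements first appear) and in the exceptional types. Here the strategy is to exhibit, under the assumption $G\subsetneq P$, a reflection $r\in P\setminus G$ that can be inserted into a reduced factorisation of $w$, contradicting the fact that all factorisations generate $G$; producing $r$ uses the explicit structure of the sub-root-systems of $\Phi_P$, together with the dual braid relations of Theorem~\ref{a4-c3-c13-NewPresentation} to modify factorisations controllably. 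Carrying this out uniformly, or case-by-case in the exceptional types (potentially with computer assistance in $E_8$), is where the main technical work of the proof lies.
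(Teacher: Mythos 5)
The survey does not actually prove this theorem --- it is quoted from \cite{a4-c3-a4-c3-c13-BGRW} --- so there is no in-paper argument to compare against; judged on its own, your proposal has a fatal error in the ``if'' direction. You claim that a reduced factorisation $w=t_1\cdots t_k$ whose factors generate a parabolic subgroup $P$ exhibits $w$ as a \emph{Coxeter element} of $(P,T\cap P)$, so that Theorem~\ref{a4-c3-c13-HurwitzTrans} applies. The rank count only shows that $w$ is a \emph{quasi-Coxeter} element of $P$: there are $\operatorname{rk}(P)$ factors and they generate $P$, but they need not form (nor be Hurwitz-equivalent to) a simple system, and $w$ need not be a Coxeter element of $P$. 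This is precisely the distinction the theorem is about; the paper itself notes that proper quasi-Coxeter elements exist from type $D_4$ onwards, and for those Theorem~\ref{a4-c3-c13-HurwitzTrans} says nothing. So the genuinely hard half --- transitivity for quasi-Coxeter elements that are not Coxeter elements --- is not addressed at all; in \cite{a4-c3-a4-c3-c13-BGRW} it requires an induction on rank and a type-by-type analysis (computer-assisted in some exceptional cases).

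The ``only if'' direction is set up reasonably: Hurwitz moves preserve the subgroup generated by the factors, one passes to the parabolic closure $P$ of that subgroup and invokes Theorem~\ref{a4-c3-c13-Factor} to identify $\operatorname{Red}_T(w)$ with $\operatorname{Red}_{T\cap P}(w)$, reducing to the exclusion of a proper full-rank reflection subgroup $G\subsetneq P$. But the decisive step --- producing a reflection of $P\setminus G$ that occurs in some reduced $T$-factorisation of $w$ --- is only gestured at, and the proposed tool is miscalibrated: the dual braid relations of Theorem~\ref{a4-c3-c13-NewPresentation} are themselves Hurwitz moves, so they can never take you out of the orbit of the given factorisation, whose factors all lie in $G$. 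You need an independent construction of a factorisation meeting $P\setminus G$, which is where the real content lies. As written, both halves defer or miss the substantive part of the theorem.
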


 Recently, Wegener showed that the dual Matsumoto property holds for
 quasi-Coxeter elements in affine Coxeter systems as well
 \cite{a4-c3-c13-We}.  These two results have the following
 consequence.
 
 \begin{corollary}
   Let $(W,T)$ be a dual Coxeter system, $w\in W$ and
   $w = t_1 \cdots t_m$ a reduced $T$-factorisation, then the Hurwitz
   action is transitive on $\operatorname{Red}_T(w)$ in the Coxeter
   group $W^\prime:= \langle t_1, \ldots , t_m\rangle$ whenever
   $W^\prime$ is a spherical or an affine Coxeter group.
 \end{corollary}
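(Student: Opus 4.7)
The plan is to transfer the Hurwitz‑transitivity question from the ambient dual Coxeter system $(W,T)$ into the reflection subgroup $W'$ itself, and then apply Theorem~\ref{a4-c3-c13-Quasi-Cox-Lattice} in the spherical case or Wegener's result in the affine case.

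First, I would invoke Dyer's structure theorem for reflection subgroups to recognise $W'$ as a Coxeter group whose full set of reflections is precisely $T' := T \cap W'$, so that $(W', T')$ is a dual Coxeter system in its own right. Since $T' \subseteq T$, every $T'$‑factorisation of $w$ is simultaneously a $T$‑factorisation, whence $l_T(w) \leq l_{T'}(w)$; combined with the $T'$‑factorisation $w = t_1 \cdots t_m$ of length $m = l_T(w)$, this forces $l_{T'}(w) = m$, so the given factorisation is $T'$‑reduced as well.

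Next, by construction the reflections $t_1, \ldots, t_m$ generate $W'$, which is exactly the condition that $w$ be a quasi‑Coxeter element of the dual Coxeter system $(W',T')$. A key observation is that the elementary Hurwitz move replaces $(t_i, t_{i+1})$ by $(t_i t_{i+1} t_i^{-1}, t_i)$, and these generate the same subgroup as $(t_i, t_{i+1})$; consequently the subgroup generated by a factorisation is invariant along Hurwitz orbits. Therefore every factorisation Hurwitz‑equivalent to $(t_1, \ldots, t_m)$ consists entirely of reflections in $W'$, and the set $\operatorname{Red}_T(w)$ considered ``in $W'$'' is canonically identified with $\operatorname{Red}_{T'}(w)$ inside the dual system $(W',T')$.

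Finally, if $W'$ is spherical, Theorem~\ref{a4-c3-c13-Quasi-Cox-Lattice} applied to $(W',T')$ and its quasi‑Coxeter element $w$ yields the desired transitivity; if $W'$ is affine, the analogous statement of Wegener~\cite{a4-c3-c13-We} delivers the same conclusion. The main obstacle in the argument is conceptual rather than technical: one must correctly interpret $\operatorname{Red}_T(w)$ ``in $W'$'' — namely, one must recognise (via Dyer) that $W'$ carries its own Coxeter structure with reflection set $T \cap W'$, and (via Hurwitz‑invariance of the generated subgroup) that the relevant orbit does not escape $W'$. Once this reduction is in place, the corollary is an immediate consequence of the two previously cited theorems.
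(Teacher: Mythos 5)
Your proposal is correct and follows essentially the same route as the paper: invoke Dyer's theorem to recognise $W'$ as a Coxeter group, observe that $w$ is a quasi-Coxeter element there, and then apply Theorem~\ref{a4-c3-c13-Quasi-Cox-Lattice} in the spherical case and Wegener's result in the affine case. The paper's proof is just a terser version of this; your additional checks (that the factorisation stays $T'$-reduced and that Hurwitz moves preserve the generated subgroup, so the orbit stays inside $W'$) are exactly the details the paper leaves implicit.
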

 \begin{proof}
 	According to  Theorem~3.3 of \cite{a4-c3-c13-Dy},  $W^\prime:= \langle t_1, \ldots , t_m\rangle$ is a Coxeter group. Theorem~\ref{a4-c3-c13-Quasi-Cox-Lattice} and the main result in
 	\cite{a4-c3-c13-We} then yield the statement.
 \end{proof}
 
 The (parabolic) quasi-Coxeter elements are interesting for more reasons; for instance also for the following.
 Let $\Phi$ be the root system related to $(W,S)$ and let $L(\Phi) := \ZZ\Phi$ and $L(\Phi^{\vee}):= \ZZ\Phi^{\vee}$ where $\alpha^{\vee}:= 2\alpha/(\alpha, \alpha)$ be the root and the coroot lattices,  respectively.
 Quasi-Coxeter elements are also intrinsic in the dual Coxeter systems as they generate the root as well as the coroot lattice:
 Let $w = t_1 \cdots t_n$ be a reduced $T$-factorisation of $w \in W$  and let $\alpha_i \in \Phi$ be the root  related to the reflection $t_i$
 for $1 \leq i \leq n$. 
 
 \begin{theorem}[{\cite[Thm.~1.1]{a4-c3-c13-BW}}]
 	Let $\Phi$ be a finite crystallographic  root system of rank $n$. Then $w$ is a quasi-Coxeter element if and only if
 	\begin{enumerate}
 		\item $\{\alpha_i~|~1 \leq i \leq n\}$ is a $\ZZ$-basis of  the root lattice $L(\Phi)$, and 
 		\item $\{ \alpha_i^{\vee}~|~1 \leq i \leq n\}$ is a $\ZZ$-basis of the coroot lattice $L(\Phi^{\vee})$.
 	\end{enumerate}
 \end{theorem}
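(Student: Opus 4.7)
The plan is to use Hurwitz-invariance of the integer sublattices spanned by the $\alpha_i$ and $\alpha_i^\vee$ as the central tool, reducing both directions to a rigidity statement about crystallographic root subsystems. Set $M := \ZZ\langle \alpha_1, \ldots, \alpha_n\rangle \subseteq L(\Phi)$ and $M^\vee := \ZZ\langle \alpha_1^\vee, \ldots, \alpha_n^\vee\rangle \subseteq L(\Phi^\vee)$. A Hurwitz move replaces some $\alpha_i$ by $s_{\alpha_j}(\alpha_i) = \alpha_i - \langle \alpha_i, \alpha_j^\vee\rangle\alpha_j$, which lies in $M$ by the crystallographic hypothesis, and symmetrically for coroots; hence $M$ and $M^\vee$ are invariants of the Hurwitz orbit of the factorisation. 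Since $l_T(w) = n$, the $\alpha_i$ span $V$ and thus form a $\QQ$-basis, so conditions (1) and (2) are equivalent to the equalities $M = L(\Phi)$ and $M^\vee = L(\Phi^\vee)$.

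For the forward direction, assume $w$ is quasi-Coxeter, so $W = \langle t_1, \ldots, t_n\rangle$. Each $t_j$ preserves $M$ by the crystallographic calculation above, hence all of $W$ preserves $M$. I claim that every root $\beta \in \Phi$ is a $W$-translate of some $\pm \alpha_i$: otherwise some $W$-orbit of roots is missed by $\{\alpha_1, \ldots, \alpha_n\}$, but in each spherical $\Phi$ the roots in a single $W$-orbit generate via their reflections a proper subgroup of $W$ (short roots of $B_n$ form $nA_1$, long roots form $D_n$; similarly for $C_n$, $F_4$ and $G_2$), contradicting $\langle t_1, \ldots, t_n\rangle = W$. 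Consequently $\beta \in M$ for every $\beta \in \Phi$, forcing $M = L(\Phi)$. The same argument in $V^*$ yields $M^\vee = L(\Phi^\vee)$.

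For the backward direction, assume both lattice equalities. Let $W' := \langle t_1, \ldots, t_n\rangle$; by Dyer's theorem (Theorem~3.3 of~\cite{a4-c3-c13-Dy}), $W'$ is a reflection group with a crystallographic root subsystem $\Phi' \subseteq \Phi$ containing all $\alpha_i$. Then $L(\Phi) = \ZZ\{\alpha_i\} \subseteq L(\Phi') \subseteq L(\Phi)$ forces $L(\Phi') = L(\Phi)$, and similarly $L(\Phi'^\vee) = L(\Phi^\vee)$. The theorem thus reduces to the rigidity claim: \emph{any full-rank crystallographic root subsystem $\Phi' \subseteq \Phi$ with $L(\Phi') = L(\Phi)$ and $L(\Phi'^\vee) = L(\Phi^\vee)$ must equal $\Phi$}. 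Granted this, $\Phi' = \Phi$ gives $W' = W$.

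The main obstacle is the rigidity claim. I would first reduce to irreducible $\Phi$ (lattices split along the irreducible factors) and then invoke the Borel--de Siebenthal algorithm, which enumerates the maximal full-rank crystallographic subsystems of an irreducible $\Phi$ as those obtained by deleting one node from the extended Dynkin diagram. For each irreducible type ($A_n$, $B_n$, $C_n$, $D_n$, $E_6$, $E_7$, $E_8$, $F_4$, $G_2$) one inspects the resulting finite list of maximal subsystems $\Phi''$ and checks that at least one of the indices $[L(\Phi) : L(\Phi'')]$ or $[L(\Phi^\vee) : L(\Phi''^\vee)]$ exceeds one. Iterating through chains of maximal subsystems (or arguing by induction on rank) then shows that every proper full-rank $\Phi' \subsetneq \Phi$ fails at least one of the two lattice equalities, completing the proof. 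The crystallographic hypothesis enters essentially here, as it is what makes Borel--de Siebenthal available and pins down the finite list of candidate subsystems to check.
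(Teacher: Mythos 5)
The survey you are working from does not actually prove this statement; it is quoted verbatim from \cite[Thm.~1.1]{a4-c3-c13-BW} with no argument, so your proposal has to stand on its own. Your forward direction is essentially sound: Carter's lemma gives the linear independence of $\alpha_1,\ldots,\alpha_n$ from $l_T(w)=n$, the crystallographic condition makes $M$ and $M^\vee$ stable under each $t_j$ and hence under $W=\langle t_1,\ldots,t_n\rangle$, and the orbit argument shows $\Phi\subseteq M$. Two points need patching. First, ``quasi-Coxeter'' only asserts that \emph{some} reduced factorisation of $w$ generates $W$, whereas the theorem concerns the given one; you must invoke \cite[Thm.~1.2]{a4-c3-a4-c3-c13-BGRW} (all reduced factorisations generate the same reflection subgroup), or equivalently combine Theorem~\ref{a4-c3-c13-Quasi-Cox-Lattice} with your Hurwitz invariance of $M$ and $M^\vee$ --- which is, in fact, the only place your announced ``central tool'' actually does any work. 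Second, your parenthetical claim that the reflections in a single $W$-orbit of roots always generate a proper subgroup is false for the simply-laced types (there is only one orbit and it generates everything); the argument survives because in that case no orbit can be missed, but it should be stated that way.

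The genuine gap is in the backward direction. The subsystem $\Phi'=\{\beta\in\Phi \mid s_\beta\in W'\}$ attached via Dyer's theorem to a reflection subgroup $W'$ need \emph{not} be closed in $\Phi$ when $\Phi$ has two root lengths, whereas Borel--de Siebenthal enumerates only the maximal \emph{closed} full-rank subsystems. Concretely, the short roots $\pm e_i\pm e_j$ of $C_n$ form a full-rank subsystem of type $D_n$ that is not closed (the long root $2e_i=(e_i+e_j)+(e_i-e_j)$ lies outside it), and deleting nodes of the extended diagram of $C_n$ only produces the subsystems $C_k\times C_{n-k}$, so your enumeration never sees it. This is exactly the dangerous candidate: its root lattice equals $L(C_n)$, so condition~(1) holds for a Coxeter element of this $W(D_n)$, and only the coroot condition~(2) fails (the coroots span an index-$2$ sublattice of $\ZZ^n=L(C_n^\vee)$). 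A Borel--de Siebenthal-only induction would therefore silently certify a non-quasi-Coxeter element, and the same phenomenon (short-root subsystems invisible to the algorithm) occurs in $B_n$, $F_4$ and $G_2$. The repair is to enumerate \emph{all} full-rank subsystems, for instance by running Borel--de Siebenthal on both $\Phi$ and $\Phi^\vee$ and transporting the two lattice conditions across the duality, or by using the classification of reflection subgroups directly; in the simply-laced types every subsystem is automatically closed, so there your scheme is already complete. With that correction the strategy is viable, but the type-by-type index computation it calls for is the real content of the proof and remains to be carried out.
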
 
 
 Thus if all the roots in $\Phi$ are of the same length, then
 $L(\Phi) = L(\Phi^{\vee})$ and the quasi-Coxeter elements correspond
 precisely to the basis of the root lattice.
 
 Quasi-Coxeter elements and Coxeter elements share further important
 properties beyond Hurwitz transitivity.
 
 \begin{theorem}[{\cite[Cor.~6.11]{a4-c3-a4-c3-c13-BGRW}}]
   An element $x \in W$ is a parabolic quasi-Coxeter element if and
   only if $x \leq_T w$ for a quasi-Coxeter element $w$.
 \end{theorem}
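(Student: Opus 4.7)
The plan is to prove both implications by combining the lattice characterisation of quasi-Coxeter elements (the Baumeister--Wegener theorem) with Theorem~\ref{a4-c3-c13-Factor}, which guarantees that the reflection subgroup generated by any reduced $T$-factorisation of an element depends only on the element itself, together with the Hurwitz transitivity result Theorem~\ref{a4-c3-c13-Quasi-Cox-Lattice}.

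For ($\Leftarrow$), assume $x \leq_T w$ with $w$ a quasi-Coxeter element, so there is a reduced factorisation $w = t_1 \cdots t_n$ with $x = t_1 \cdots t_k$. I want to show that $P := \langle t_1, \ldots, t_k\rangle$ is a parabolic subgroup of $W$. In the crystallographic setting, the Baumeister--Wegener theorem gives that the associated roots $\alpha_1, \ldots, \alpha_n$ form a $\ZZ$-basis of the root lattice $L(\Phi)$; linear independence then forces $L(\Phi) \cap \operatorname{span}_\RR(\alpha_1, \ldots, \alpha_k) = \ZZ\langle \alpha_1, \ldots, \alpha_k\rangle$, and consequently $\Phi \cap \operatorname{span}_\RR(\alpha_1, \ldots, \alpha_k)$ is a parabolic root subsystem whose reflection group is precisely $P$. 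The non-crystallographic types $H_3, H_4, I_2(m)$ need a separate (and largely case-by-case) argument, invoking the Hurwitz transitivity of $\operatorname{Red}_T(w)$ to reduce to factorisations adapted to a fixed simple system.

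For ($\Rightarrow$), fix a reduced factorisation $x = t_1 \cdots t_k$ with $P := \langle t_1, \ldots, t_k\rangle$ parabolic of rank $k$. Choose a simple system $S$ of $W$ such that $S_P := S \cap P$ is a simple system of $P$; this exists (up to conjugating $P$) by the definition of a parabolic subgroup. Setting $Q := \langle S \setminus S_P\rangle$, pick a Coxeter element $c_Q = s_{k+1} \cdots s_{n}$ of $Q$ and define $w := x\, c_Q$. The appended simple reflections involve roots linearly independent from $\operatorname{span}(\alpha_1, \ldots, \alpha_k)$, which forces $l_T(w) = n$ and hence $x \leq_T w$. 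Finally $\langle P, Q\rangle = \langle S\rangle = W$, so $w$ is a quasi-Coxeter element of $W$ above $x$.

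The main obstacle will be the $(\Leftarrow)$ direction outside the crystallographic range: without the lattice-basis characterisation, one must instead work directly with the combinatorics of reduced reflection factorisations, using Hurwitz transitivity to normalise them and Theorem~\ref{a4-c3-c13-Factor} to conclude that the generated subgroup is independent of the choice. A secondary technical point in $(\Rightarrow)$ is the length additivity $l_T(x\, c_Q) = l_T(x) + l_T(c_Q)$, for which one needs the linear independence of the root sets, not merely the group-theoretic decomposition $W = \langle P, Q\rangle$.
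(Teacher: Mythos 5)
The paper contains no proof of this statement --- it is quoted as \cite[Cor.~6.11]{a4-c3-a4-c3-c13-BGRW} without argument --- so there is nothing in-text to compare your proposal against; I can only assess it on its own terms. Your ($\Rightarrow$) direction is essentially correct: conjugating a simple system so that it meets the parabolic $P$ in a simple system of $P$, appending the complementary simple reflections, and checking $l_T(x\,c_Q)=n$ via Carter's criterion (a product of reflections is $T$-reduced if and only if the associated roots are linearly independent) all go through, and you rightly isolate the length additivity as the point that needs the root-level argument rather than the decomposition $W=\langle P,Q\rangle$.

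The genuine gap is in ($\Leftarrow$), at the word ``consequently''. From the $\ZZ$-basis property of $\{\alpha_1,\dots,\alpha_n\}$ you correctly get $L(\Phi)\cap X=\ZZ\langle\alpha_1,\dots,\alpha_k\rangle$ for $X=\operatorname{span}_{\RR}(\alpha_1,\dots,\alpha_k)$, but this only says that every root of the parabolic subsystem $\Phi\cap X$ is an integral combination of $\alpha_1,\dots,\alpha_k$; it does not by itself place the corresponding reflections inside $P=\langle t_1,\dots,t_k\rangle$, which is exactly what ``whose reflection group is precisely $P$'' asserts. To close this you must apply the converse direction of the Baumeister--Wegener criterion to the rank-$k$ system $\Phi\cap X$: both the root \emph{and} the coroot lattice conditions (you invoke only the former, and they are inequivalent outside the simply laced types) descend to $\Phi\cap X$, whence $x$ is a quasi-Coxeter element of $W(\Phi\cap X)$; one then needs the fact that every reduced $T$-factorisation of such an element generates the same subgroup (via Theorem~\ref{a4-c3-c13-Factor} and \cite[Thm.~1.2]{a4-c3-a4-c3-c13-BGRW}) to conclude $P=W(\Phi\cap X)$ is parabolic. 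Even granting all of this, the types $H_3$, $H_4$ and $I_2(m)$ are left entirely open --- you name the obstacle but give no argument, and there the lattice criterion is simply unavailable, so that is where the substantive remaining work sits. A final caveat: \cite[Thm.~1.1]{a4-c3-c13-BW} postdates \cite{a4-c3-a4-c3-c13-BGRW} and builds on it, so deriving Cor.~6.11 from it is a legitimate exercise within this survey's framework but should not be presented as an independent proof.
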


 Finally, Gobet observed that, in a spherical Coxeter system, every
 parabolic quasi-Coxeter element can be uniquely written as a product
 of commuting parabolic quasi-Coxeter elements \cite{a4-c3-c13-Go}.
 This factorisation of a quasi-Coxeter element can be thought of as a
 generalisation of the unique disjoint cycle decomposition of a
 permutation.

\section{Non-crossing partitions arising in representation theory}

In this section, we explain how non-crossing partitions arise naturally
in representation theory. For any finite dimensional algebra $A$ over
a field $k$ we consider the category $\operatorname{mod} A$ of finite dimensional
(right) $A$-modules and denote by $K_0(A)$ its \emph{Grothendieck
  group}\index{Grothendieck group}.  This group is free abelian of
finite rank, and a representative set of simple $A$-modules
$S_1,\ldots, S_n$ provides a basis $e_1,\ldots,e_n$ if one sets
$e_i=[S_i]$ for all $i$. As usual, we denote for any $A$-module $X$ by
$[X]$ the corresponding class in $K_0(A)$.  The Grothendieck group
comes equipped with the \emph{Euler form}\index{Euler form}
$K_0(A)\times K_0(A)\to\ZZ$ given by
\[\langle[X],[Y]\rangle=\sum_{n\ge 0}(-1)^n\dim_k\operatorname{Ext}^n_A(X,Y)\]
which is bilinear and non-degenerate (assuming that $A$ is of finite
global dimension). The corresponding symmetrised form is given by
$(x,y)=\langle x,y\rangle +\langle y,x\rangle$.  For a class $x=[X]$
given by a module $X$, one defines the reflection
\begin{equation}\label{eq:defn-reflection}
  s_x\colon K_0(A)\longrightarrow K_0(A),\quad a\mapsto
  a-2\frac{(a,x)}{(x,x)}x,
\end{equation}
assuming that $(x,x)\neq 0$ divides
$(e_i,x)$ for all $i$.  Let us denote by $W(A)$ the group of
automorphisms of $K_0(A)$ that is generated by the set of simple
reflections $S(A)=\{s_{e_1},\ldots,s_{e_n}\}$; it is called the
\emph{Weyl group}\index{Weyl group} of $A$.

From now on, assume that $A$ is \emph{hereditary}, that is, of global
dimension at most one.  Then, one can show that the Weyl group $W(A)$
is actually a Coxeter group. For example, the path algebra $kQ$ of any
quiver $Q$ is hereditary and in that case $kQ$-modules identify with
$k$-linear representations of $Q$.

\begin{proposition}[{\cite[Thm.~B.2]{a4-c3-c13-HK2016}}]
  A Coxeter system $(W,S)$ is of the form $(W(A),S(A))$ for some finite
  dimensional hereditary algebra $A$ if and only if it is
  crystallographic in the following sense:
\begin{enumerate}
\item $m_{st}\in\{2,3,4,6,\infty\}$ for all $s\neq t$ in $S$, and
\item in each circuit of the Coxeter graph not containing the edge
label $\infty$, the number of edges labelled $4$ (resp.\ $6$) is even.\qed
\end{enumerate}
\end{proposition}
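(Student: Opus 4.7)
The plan is to reduce both directions to a computation on the integer symmetric Euler form $(-,-)$ on $K_0(A)$. After choosing $A$ basic (which does not change the Coxeter system), set $d_i := \dim_k \End(S_i)$; since $A$ is hereditary and basic connected, one checks $\operatorname{Ext}^1(S_i,S_i)=0$, so $(e_i,e_i)=2d_i$ and for $i\ne j$
\[
  (e_i,e_j) = -\dim_k \operatorname{Ext}^1(S_i,S_j)-\dim_k \operatorname{Ext}^1(S_j,S_i)\in\ZZ_{\leq 0}.
\]
The order $m_{ij}$ of $s_{e_i}s_{e_j}$ in $W(A)$ is then governed by the standard dihedral identity
\[
  4\cos^2(\pi/m_{ij}) \;=\; \frac{(e_i,e_j)^2}{(e_i,e_i)(e_j,e_j)} \;=\; \frac{(e_i,e_j)^2}{4 d_i d_j}\in\QQ_{\geq 0}.
\]

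For the forward direction, condition (1) is immediate from Niven's theorem: the only rational values of $4\cos^2(\pi/m)$ for integer $m\geq 2$ are $0,1,2,3,4$, corresponding respectively to $m\in\{2,3,4,6,\infty\}$ (with $4$ realising $m=\infty$). For condition (2) I will extract arithmetic constraints from the equation above: an edge labelled $4$ forces $2d_id_j$ to be a perfect square, hence $v_2(d_i)+v_2(d_j)\equiv 1\pmod{2}$; an edge labelled $6$ likewise forces $v_3(d_i)+v_3(d_j)\equiv 1\pmod{2}$; edges labelled $2,3$ or absent impose no such parity constraint. Summing $v_p(d_i)+v_p(d_j)\pmod 2$ along a circuit, each vertex contributes twice and so cancels; thus the number of label-$4$ (resp.\ label-$6$) edges in a circuit without $\infty$-labels must be even.

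For the converse, given a Coxeter system satisfying (1) and (2), the plan is to build a hereditary algebra as the tensor algebra of a species (modulation) realising the Coxeter matrix. Condition (2) is exactly the cocycle/solvability condition allowing a choice of positive integers $d_s$ (one per vertex) with the correct $v_2$-parities across label-$4$ edges and correct $v_3$-parities across label-$6$ edges: first solve the $\mathrm{mod}\,2$ system, then the $\mathrm{mod}\,3$ system, on the Coxeter graph. Having fixed such $d_s$, choose separable field extensions $D_s/k$ with $[D_s:k]=d_s$ over a suitable ground field (e.g.\ $k=\QQ$), pick an acyclic orientation of the Coxeter graph, and for each edge $\{s,t\}$ an appropriate $D_s$--$D_t$-bimodule whose $k$-dimension produces the required off-diagonal Cartan entry. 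The associated tensor algebra is finite-dimensional and hereditary, and a direct comparison of the Euler form recovers $(W,S)$ together with its simple reflections.

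The main obstacle is the backward construction: the combinatorial step (choosing the $d_s$) is where condition (2) enters decisively, while the algebraic step requires choosing a field $k$ over which the requisite division algebras and bimodules actually exist. I expect that working over $\QQ$ and using cyclotomic-type extensions, together with the standard Dlab--Ringel machinery for species of hereditary type, will handle the realisation without any obstruction beyond (1) and (2).
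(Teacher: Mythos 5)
The paper itself offers no proof here: the proposition is quoted from \cite[Thm.~B.2]{a4-c3-c13-HK2016} and closed with a \textsc{qed} box, so your argument can only be measured against that source --- and it does reconstruct essentially the same proof (reduce to rank two via the symmetrised Euler form, get (1) from Niven's theorem, get (2) from parities of $2$- and $3$-adic valuations of the $d_i=\dim_k\End(S_i)$ around circuits, and prove the converse by realising a suitable valued graph as a species/tensor algebra). Three points need tightening. (i) The displayed identity carries a spurious factor of $4$: one has $\cos^2(\pi/m_{ij})=(e_i,e_j)^2/\bigl((e_i,e_i)(e_j,e_j)\bigr)$, hence $4\cos^2(\pi/m_{ij})=(e_i,e_j)^2/(d_id_j)$; your subsequent arithmetic ($2d_id_j$, resp.\ $3d_id_j$, a perfect square) uses the correct values, so this is only a slip. (ii) It is not true that a label-$3$ edge ``imposes no such parity constraint'': it forces $d_id_j$ to be a perfect square, hence $v_p(d_i)+v_p(d_j)\equiv 0\pmod 2$ for $p=2,3$, and your circuit argument needs exactly this (together with the evenness of the $v_2$-sum across label-$6$ edges and of the $v_3$-sum across label-$4$ edges) for the telescoping sum to isolate the label-$4$ (resp.\ label-$6$) edges; as written, the contribution of the non-label-$4$ edges to the $v_2$-sum is left undetermined. (iii) In the converse, ``a direct comparison of the Euler form recovers $(W,S)$'' hides the essential last step: one must invoke the standard theorem on reflection representations (Bourbaki/Vinberg) to conclude that the group generated by the $s_{e_i}$ for such a form \emph{is} the Coxeter group with the prescribed matrix and that the representation is faithful --- matching the orders of the pairwise products does not by itself identify $(W(A),S(A))$ with $(W,S)$, since proper quotients of Coxeter groups can preserve those orders. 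The realisation itself goes through as you sketch and can be made fully explicit: condition (2) is precisely the solvability over $\FF_2$ of $x_s+x_t\equiv\epsilon(st)$ on the subgraph of non-$\infty$ edges; one may take $d_s=2^{x_s}3^{y_s}\in\{1,2,3,6\}$, realise the $D_s$ as subfields of a cyclic sextic extension of $\QQ$, and check that the required bimodule dimension $\sqrt{c\,d_sd_t}$ (with $c\in\{1,2,3\}$ for labels $3,4,6$) always equals $\lcm(d_s,d_t)$, so the compositum $D_sD_t$ itself serves as the bimodule, while $\infty$-edges need only a sufficiently large multiple of $\lcm(d_s,d_t)$; an acyclic orientation then makes the tensor algebra finite dimensional and hereditary.
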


We may assume that the simple $A$-modules are numbered in such a way
that $\langle e_i,e_j\rangle=0$ for $i >j$, and we set
$c=s_{e_1}\cdots s_{e_n}$. Note that $c=c(A)$ is a \emph{Coxeter
  element} which is determined by the formula
\[\langle x,y\rangle=-\langle y,c(x)\rangle\qquad\text{for}\qquad x,y\in K_0(A).\]

We are now in a position to formulate a theorem which provides an
explicit bijection between certain subcategories of $\operatorname{mod} A$ and the
non-crossing partitions in $\operatorname{NC}(W(A),c)$. Call a full subcategory
$\cC\subseteq \operatorname{mod} A$ \emph{thick} if it is closed under direct
summands and satisfies the following two-out-of-three property: any
exact sequence $0\to X\to Y\to Z\to 0$ of $A$-modules lies in
$\cC$ if two of $\{X,Y,Z\}$ are in $\cC$. A subcategory
is \emph{coreflective} if the inclusion functor admits a right adjoint.

\begin{theorem}\label{th:algebras-main}
  Let $A$ be a hereditary finite dimensional algebra.  Then, there is
  an order preserving bijection between the set of thick and
  coreflective subcategories of $\operatorname{mod} A$ (ordered by inclusion) and
  the partially ordered set of non-crossing partitions
  $\operatorname{NC}(W(A),c)$. The map sends a subcategory which is generated by an
  exceptional sequence $E=(E_1,\ldots, E_r)$ to the product of
  reflections $s_E=s_{E_1}\cdots s_{E_r}$.\qed
\end{theorem}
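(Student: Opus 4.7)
The strategy is to exploit the well-developed dictionary between exceptional sequences in $\operatorname{mod} A$ and reduced $T$-factorisations in $(W(A), T)$, where $T$ is the set of all reflections of $W(A)$. First, I would recall (following Crawley-Boevey and Ringel) that for a hereditary finite dimensional algebra $A$, every thick and coreflective subcategory $\cC \subseteq \operatorname{mod} A$ is generated by an exceptional sequence $E = (E_1, \ldots, E_r)$; moreover, any two exceptional sequences generating $\cC$ are related by the \emph{Hurwitz action} of the braid group $\cB_r$. The coreflectivity hypothesis ensures this correspondence: it forces $\cC$ to be equivalent to a module category $\operatorname{mod} B$ for some hereditary algebra $B$, and hence to be generated by an exceptional sequence of length $r = \operatorname{rank} K_0(\cC)$.

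Next I would verify that the assignment $\cC \mapsto s_E = s_{E_1} \cdots s_{E_r}$ is well defined. Since each $E_i$ is exceptional, the class $[E_i] \in K_0(A)$ satisfies $\langle[E_i],[E_i]\rangle = 1$, so the formula \eqref{eq:defn-reflection} produces a genuine reflection $s_{E_i}$. The Hurwitz move
\[
(s_{E_1}, \ldots, s_{E_i}, s_{E_{i+1}}, \ldots, s_{E_r}) \longmapsto (s_{E_1}, \ldots, s_{E_{i+1}}, s_{E_{i+1}}^{-1} s_{E_i} s_{E_{i+1}}, \ldots, s_{E_r})
\]
leaves the product invariant, hence $s_E$ depends only on $\cC$. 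To show $s_E \in \operatorname{NC}(W(A),c)$, I would extend $E$ to a full exceptional sequence $(E_1,\ldots,E_r,E_{r+1},\ldots,E_n)$ of $\operatorname{mod} A$; then Hurwitz transitivity on full exceptional sequences (equivalently, on $\operatorname{Red}_T(c)$ as in Theorem~\ref{a4-c3-c13-HurwitzTrans}) together with the fact that the product of simple reflections along the standard sequence $(S_1,\ldots,S_n)$ equals $c$ implies that $s_{E_1}\cdots s_{E_n} = c$. The first $r$ factors then give a $T$-reduced expression for $s_E$, witnessing $s_E \leq_T c$.

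For the inverse map, given $u \in \operatorname{NC}(W(A),c)$ with a $T$-reduced factorisation $u = t_1 \cdots t_r$, I would use the hereditary realisation of reflections: any reflection below $c$ is of the form $s_{[X]}$ for an exceptional $A$-module $X$, because completing $t_1\cdots t_r$ to a reduced factorisation of $c$ and applying Hurwitz transitivity on $\operatorname{Red}_T(c)$ reduces to the standard factorisation, from which one transports a full exceptional sequence back along the same Hurwitz moves. This produces an exceptional sequence $E$ with $s_E = u$; let $\cC(u)$ be the thick coreflective subcategory generated by $E$. Hurwitz-invariance of both sides shows $\cC(u)$ is independent of the chosen factorisation, and the two assignments are mutually inverse.

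Finally, order-preservation follows from the observation that $\cC \subseteq \cC'$ if and only if an exceptional sequence generating $\cC$ can be extended (on the right) to one generating $\cC'$, which on the Weyl-group side translates into $s_E \leq_T s_{E'}$. I expect the main technical obstacle to be the passage between exceptional sequences and reduced $T$-factorisations, i.e.\ the fact that every reflection $\leq_T c$ is realised by an exceptional module; this is where both Hurwitz transitivity (Theorem~\ref{a4-c3-c13-HurwitzTrans}) and the hereditary hypothesis play their essential roles.
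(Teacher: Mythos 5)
Your outline follows the same route the paper itself sketches: thick coreflective subcategories are generated by exceptional sequences, the assignment $E\mapsto s_E$ is braid-equivariant, and both the membership $s_E\in\operatorname{NC}(W(A),c)$ and the surjectivity of the map come from the transitivity of the Hurwitz action on $\operatorname{Red}_T(c)$ combined with the transitivity of the braid group action on complete exceptional sequences. Two points, however, need attention. A small one first: $\langle [E_i],[E_i]\rangle$ equals $\dim_k\operatorname{End}_A(E_i)$, which is $1$ only when the endomorphism ring of the exceptional module is the base field (e.g.\ for path algebras over an algebraically closed field); in general it is the dimension of a division algebra, and one must invoke the divisibility condition accompanying \eqref{eq:defn-reflection} to see that $s_{[E_i]}$ is defined.

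More seriously, the assertion that ``the two assignments are mutually inverse'' does not follow from the two transitivity statements you invoke. Transitivity of the Hurwitz action on $\operatorname{Red}_T(c)$ lets you transport the standard complete exceptional sequence of simples along a braid to obtain \emph{some} complete exceptional sequence $(F_1,\ldots,F_n)$ with $s_{F_i}=t_i$ for a prescribed reduced factorisation $c=t_1\cdots t_n$; but if you start from $\cC$ generated by $E=(E_1,\ldots,E_r)$, pass to $u=s_E$, and perform this lifting for a completion of $(s_{E_1},\ldots,s_{E_r})$, nothing you have said guarantees that $(F_1,\ldots,F_r)$ generates the same subcategory as $E$ — the braid realising the transport is not unique, and distinct complete exceptional sequences could a priori map to the same reduced factorisation. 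What is needed is injectivity of the equivariant map from complete exceptional sequences to $\operatorname{Red}_T(c)$: concretely, that each reflection occurring in such a factorisation determines a unique positive real Schur root, and that an exceptional $A$-module is determined up to isomorphism by its class in $K_0(A)$. This uniqueness is a genuine theorem about hereditary algebras, part of what \cite{a4-c3-c13-IT2009} and \cite{a4-c3-c13-HK2016} actually establish, and it is not a formal consequence of Hurwitz transitivity. Your proposal correctly identifies the surjectivity step (realising every reflection below $c$ by an exceptional module) as an obstacle, but it is this injectivity that makes the inverse map well defined and is the missing ingredient.
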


The rest of this article is devoted to explaining this result. In
particular, the crucial notion of an exceptional sequence will be discussed.

This result goes back to beautiful work of Ingalls and Thomas
\cite{a4-c3-c13-IT2009}.  It was then established for arbitary path algebras by
Igusa, Schiffler, and Thomas \cite{a4-c3-c13-IS}, and we refer to \cite{a4-c3-c13-HK2016}
for the general case.  Observe that path algebras of quivers cover
only the Coxeter groups of simply laced type (via the correspondence
$A\mapsto W(A)$); so there are further hereditary algebras.

We may think of Theorem~\ref{th:algebras-main} as a
\emph{categorification} of the poset of non-crossing partitions. There
is an immediate (and easy) consequence which is not obvious at all
from the original definition of non-crossing partitions; the first
(combinatorial) proof required a case by case analysis.

\begin{corollary}
  For a finite crystallographic Coxeter group, the corresponding poset
  of non-crossing partitions is a lattice.
\end{corollary}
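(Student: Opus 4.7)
The plan is to transport the lattice structure across the bijection of Theorem~\ref{th:algebras-main}. Given a finite crystallographic Coxeter group $W$ with Coxeter element $c$, the preceding proposition supplies a hereditary finite-dimensional algebra $A$ with $(W(A),S(A))=(W,S)$; after ordering the simples appropriately we may arrange $c(A)=c$. Theorem~\ref{th:algebras-main} then identifies $\operatorname{NC}(W,c)$, as a poset, with the collection $\mathfrak{T}$ of thick and coreflective subcategories of $\operatorname{mod} A$ ordered by inclusion. Since an order-isomorphism preserves meets and joins, it suffices to prove that $\mathfrak{T}$ is a lattice.

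The poset $\mathfrak{T}$ has a maximum, namely $\operatorname{mod} A$ itself, which is trivially thick and coreflective. In view of Remark~\ref{a4-c3-c13-half-a-lattice-is-a-lattice}, it therefore suffices to produce greatest lower bounds; joins will follow automatically. For a non-empty family $(\cC_i)_{i\in I}$ in $\mathfrak{T}$, the natural candidate is the intersection $\cC:=\bigcap_{i\in I}\cC_i$. Thickness of $\cC$ is inherited componentwise: if two terms of a short exact sequence $0\to X\to Y\to Z\to 0$ belong to $\cC$, then they belong to each $\cC_i$, and so does the third; closure under direct summands passes to the intersection in the same way.

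The substantive step, and the main obstacle, is to verify that $\cC$ is coreflective, i.e., that the inclusion $\cC\hookrightarrow\operatorname{mod} A$ admits a right adjoint. One tactic is to build this right adjoint by iterating the individual coreflectors $R_i\colon\operatorname{mod} A\to\cC_i$: for a module $M$, applying the $R_i$ in turn produces a descending chain of submodules, which must stabilise since $A$ is hereditary and all modules in sight are finite-dimensional; the stable submodule is then the largest piece of $M$ lying in every $\cC_i$. An alternative route exploits the fact that in the hereditary finite-dimensional setting a thick coreflective subcategory is precisely one generated by an exceptional sequence, and checks combinatorially that intersections preserve this property. Once meets are in hand, Remark~\ref{a4-c3-c13-half-a-lattice-is-a-lattice} upgrades $\mathfrak{T}$ to a lattice, and transport along the poset isomorphism of Theorem~\ref{th:algebras-main} yields the lattice structure on $\operatorname{NC}(W,c)$.
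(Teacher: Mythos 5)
Your overall strategy---transport the problem along the bijection of Theorem~\ref{th:algebras-main}, observe that the top element exists, and invoke Remark~\ref{a4-c3-c13-half-a-lattice-is-a-lattice} so that only meets need to be constructed---matches the paper exactly, and your verification that intersections of thick subcategories are thick is fine. But you have correctly identified the crux (coreflectivity of the intersection) and then not actually proved it. Your first tactic, iterating the coreflectors $R_i$, is not justified as stated: the counit $R_i(M)\to M$ of a coreflection need not be a monomorphism, so there is no ``descending chain of submodules'' to appeal to; and even granting that some iteration stabilises, you would still have to check that the stable object lies in every $\cC_i$ and satisfies the universal property of a coreflection of $M$ into $\bigcap_i\cC_i$ (the object $R_2(R_1(M))$ lies in $\cC_2$ but may have left $\cC_1$, so stabilisation is the whole issue, not a formality). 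Your second tactic---``checks combinatorially that intersections preserve this property''---is a restatement of the problem, not an argument.

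The paper closes this gap by a different and much cheaper observation: since $W$ is \emph{finite}, it can be realised as $W(A)$ for a hereditary algebra $A$ of \emph{finite representation type}, and for such an $A$ every thick subcategory is automatically coreflective. With that choice of $A$, thickness of the intersection (which you did verify) immediately gives coreflectivity for free, and the argument via Remark~\ref{a4-c3-c13-half-a-lattice-is-a-lattice} goes through. Your write-up never uses finiteness of $W$ beyond existence of $A$, which is a sign that something is missing: for a general (infinite) crystallographic Coxeter group the corresponding algebra is not of finite representation type and coreflectivity of intersections is genuinely delicate. To repair your proof, either insert the finite-representation-type reduction, or supply an actual proof that an intersection of subcategories generated by exceptional sequences is again of that form---which is the hard content you have deferred.
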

\begin{proof}
  Any finite Coxeter group can be realised as the the Weyl group
  $W(A)$ of a hereditary algebra of finite representation type. In
  that case any thick subcategory is coreflective.  On the other hand,
  it is clear from the definition that the intersection of any
  collection of thick subcategories is again thick. This yields the
  join, but also the meet operation; so the poset of thick and
  coreflective subcategories is actually a lattice; see
  Remark~\ref{a4-c3-c13-half-a-lattice-is-a-lattice}
\end{proof}

This categorification provides some further insight into the
\emph{collection} of all posets of non-crossing partitions. This is
based on the simple observation that any thick and coreflective
subcategory $\cC\subseteq\operatorname{mod} A$ (given by an exceptional
sequence $E=(E_1,\ldots,E_r)$) is again the module category of a
finite dimensional hereditary algebra, say $\cC=\operatorname{mod} B$. Then
the inclusion $\operatorname{mod} B\to \operatorname{mod} A$ induces not only an inclusion
$K_0(B)\to K_0(A)$, but also an inclusion $W(B)\to W(A)$ for the
corresponding Weyl groups, which identifies $W(B)$ with the subgroup
of $W(A)$ generated by $s_{E_1},\ldots,s_{E_r}$, and identifies the
Coxeter element $c(B)$ with the non-crossing partition $s_E$ in
$W(A)$. Moreover, the inclusion $W(B)\to W(A)$ induces an isomorphism
\[\operatorname{NC}(W(B),c(B))\stackrel{\sim}\to \{x\in \operatorname{NC}(W(A),c(A))\mid x\le s_E\}.\]

The following result summarises this discussion; it reflects the fact
that there is a \emph{category of non-crossing partitions}. This means
that we consider a poset of non-crossing partitions not as a single
object but look instead at the relation with other posets of
non-crossing partitions.

\begin{corollary}[{\cite[Cor.~5.8]{a4-c3-c13-HK2016}}]
 \pushQED{\qed} Let $\operatorname{NC}(W,c)$ be the poset of non-crossing partitions given by a
  crystallographic Coxeter group $W$. Then, any element $x\in \operatorname{NC}(W,c)$
  is the Coxeter element of a subgroup $W'\le W$ that is again a crystallographic 
Coxeter group. Moreover,
  \[\operatorname{NC}(W',x)= \{y\in \operatorname{NC}(W,c)\mid y\le x\}.\qedhere\]
\end{corollary}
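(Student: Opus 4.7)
The plan is to transport everything through the categorification provided by Theorem~\ref{th:algebras-main}. Since $W$ is a finite crystallographic Coxeter group, we may choose a hereditary finite dimensional algebra $A$ with $W(A)=W$ and $c(A)=c$. Under the bijection of Theorem~\ref{th:algebras-main}, the non-crossing partition $x\in\operatorname{NC}(W,c)$ corresponds to a thick and coreflective subcategory $\cC\subseteq\operatorname{mod} A$ generated by an exceptional sequence $E=(E_1,\ldots,E_r)$, with $x=s_E=s_{E_1}\cdots s_{E_r}$.

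The first key step is the observation already quoted in the excerpt: such a $\cC$ is itself the module category of a hereditary finite dimensional algebra $B$, i.e., $\cC\cong\operatorname{mod} B$. Then the inclusion $\operatorname{mod} B\hookrightarrow\operatorname{mod} A$ induces an inclusion $K_{0}(B)\hookrightarrow K_{0}(A)$ compatible with Euler forms, and therefore an inclusion $W(B)\hookrightarrow W(A)$ whose image is the subgroup generated by $s_{E_1},\ldots,s_{E_r}$. Under this identification, the Coxeter element $c(B)$ corresponds exactly to $s_E=x$, so $W':=W(B)$ is a crystallographic Coxeter group admitting $x$ as its Coxeter element. Hence the first assertion holds.

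For the identity $\operatorname{NC}(W',x)=\{y\in\operatorname{NC}(W,c)\mid y\leq x\}$, I would apply Theorem~\ref{th:algebras-main} to $B$ as well. A non-crossing partition $y\in\operatorname{NC}(W',x)$ corresponds to a thick and coreflective subcategory $\cD\subseteq\operatorname{mod} B=\cC$. The main technical step is to check that $\cD$ remains thick and coreflective when viewed inside $\operatorname{mod} A$: thickness is inherited because the inclusion $\operatorname{mod} B\subseteq\operatorname{mod} A$ is exact and closed under extensions, and coreflectivity follows by composing right adjoints (first the right adjoint $\operatorname{mod} A\to\operatorname{mod} B$ of $\cC\hookrightarrow\operatorname{mod} A$, then the right adjoint $\operatorname{mod} B\to\cD$). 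Conversely, any thick coreflective subcategory of $\operatorname{mod} A$ contained in $\cC=\operatorname{mod} B$ is automatically thick and coreflective in $\operatorname{mod} B$. This gives an inclusion-preserving bijection between the two sides, and by Theorem~\ref{th:algebras-main} it is exactly the desired $y\leftrightarrow s_{E'}$, i.e., $\operatorname{NC}(W',x)\cong\{y\in\operatorname{NC}(W,c)\mid y\leq x\}$ as posets.

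The step I expect to be most delicate is the compatibility of coreflectivity with the nested inclusions: one needs that the right adjoints exist and compose correctly, and that the Euler form on $K_{0}(B)$ induced from $K_{0}(A)$ really does recover the intrinsic Euler form of $B$, so that the reflection $s_{E_i}$ computed in $W(A)$ agrees with the one computed in $W(B)$. Once this is in place, the two applications of Theorem~\ref{th:algebras-main} fit together and yield the corollary.
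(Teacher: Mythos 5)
Your proposal is correct and follows essentially the same route as the paper: there the corollary is presented as a summary of the immediately preceding discussion, which identifies the thick coreflective subcategory $\cC$ corresponding to $x$ with $\operatorname{mod} B$ for a hereditary algebra $B$, sets $W'=W(B)$ with Coxeter element $c(B)=s_E=x$, and asserts the induced poset isomorphism $\operatorname{NC}(W(B),c(B))\cong\{y\le s_E\}$ --- your third paragraph simply fills in the details of that last assertion by a second application of Theorem~\ref{th:algebras-main}. One small correction: the statement assumes only that $W$ is crystallographic, not finite, and the realisation $W=W(A)$ is available in that generality, so the word \emph{finite} in your opening sentence should be dropped (your argument goes through unchanged without it).
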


\section{Generalised Cartan lattices}

Coxeter groups and non-crossing partitions are closely related to root
systems. The approach via representation theory provides a natural
setting, because the Grothendieck group equipped with the Euler form
determines a root system; we call this a \emph{generalised Cartan
  lattice}\index{generalised Cartan lattice} and refer to
\cite{a4-c3-c13-HK2016} for a detailed study.

The following definition formalises the properties of the Grothendieck
group $K_0(A)$. A \emph{generalised Cartan lattice} is a free abelian
group ${\Gamma}\cong\ZZ^n$ with an ordered standard basis $e_1,\ldots,e_n$
and a bilinear form $\langle -,-\rangle\colon{\Gamma}\times{\Gamma}\to\ZZ$
satisfying the following conditions.
\begin{enumerate}
\item $\langle e_i,e_i\rangle>0$ and $\langle e_i,e_i\rangle$ divides $\langle e_i,e_j\rangle$ for all $i,j$. 
\item $\langle e_i,e_j\rangle =0$ for all $i>j$. 
\item $\langle e_i,e_j\rangle \le 0$ for all $i<j$. 
\end{enumerate}
The corresponding \emph{symmetrised form} is
\[ (x,y) = \langle x,y\rangle+\langle y,x\rangle\quad\text{for }
  x,y\in {\Gamma}. \] The ordering of the basis yields the \emph{Coxeter
  element}
\[\operatorname{cox}({\Gamma}):=s_{e_1}\cdots s_{e_n}.\]
We can define reflections $s_x$ as in \eqref{eq:defn-reflection} and
denote by $W=W({\Gamma})$ the corresponding \emph{Weyl group}\index{Weyl group}, which is the
subgroup of $\operatorname{Aut}({\Gamma})$ generated by the simple reflections
$s_{e_1},\ldots, s_{e_n}$. We write $\operatorname{NC}({\Gamma})=\operatorname{NC}(W,c)$ with
$c=\operatorname{cox}({\Gamma})$ for the poset of non-crossing partitions, and the set of
\emph{real roots} is
\[\Phi({\Gamma}) := \{ w(e_i) \mid w\in W({\Gamma}),\,1\le i\le n \}
  \subseteq{\Gamma}. \] A \emph{real exceptional
  sequence}\index{exceptional sequence} of ${\Gamma}$ is a sequence
$(x_1,\ldots,x_r)$ of elements that can be extended to a basis
$x_1,\ldots,x_n$ of ${\Gamma}$ consisting of real roots and satisfying
$\langle x_i,x_j\rangle =0$ for all $i>j$.  A \emph{morphisms}
${\Gamma}'\to{\Gamma}$ of generalised Cartan lattices is given by an isometry
(morphism of abelian groups preserving the bilinear form
$\langle-,-\rangle$) that maps the standard basis of ${\Gamma}'$ to a real
exceptional sequence of ${\Gamma}$. This yields a \emph{category of
  generalised Cartan lattices}.

What is this category good for?  One of the basic principles of
category theory is \emph{Yoneda's lemma} which tells us that we
understand an object ${\Gamma}$ by looking at the representable functor
$\operatorname{Hom}(-,{\Gamma})$ which records all morphisms that are received by ${\Gamma}$. In
our category all morphisms are monomorphisms, so $\operatorname{Hom}(-,{\Gamma})$  amounts
to the poset of subobjects (equivalence classes of monomorphisms
${\Gamma}'\to{\Gamma}$).

\begin{theorem}[{\cite[Thm~5.6]{a4-c3-c13-HK2016}}]
  \pushQED{\qed} The poset of subobjects of a generalised Cartan
  lattice ${\Gamma}$ is isomorphic to the poset of non-crossing partitions
  $\operatorname{NC}({\Gamma})$. The isomorphism sends a monomorphism
  $\phi\colon{\Gamma}'\to{\Gamma}$ to $s_{\phi(e_1)}\cdots s_{\phi(e_r)}$ where
  $\operatorname{cox}({\Gamma}')=s_{e_1}\cdots s_{e_r}$. Moreover, the assignment
  $w\mapsto w|_{{\Gamma}'}$ induces an isomorphism
\[W({\Gamma})\supseteq\langle s_{\phi(e_1)},\ldots,
s_{\phi(e_r)}\rangle\stackrel{\sim}\longrightarrow W({\Gamma}').\qedhere\]
\end{theorem}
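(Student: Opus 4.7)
The plan is to reduce to the hereditary-algebra setting of Theorem~\ref{th:algebras-main} and transport the bijection along the identification of ${\Gamma}$ with the Grothendieck group of such an algebra. First I would realise ${\Gamma}$ as $K_0(A)$ for a finite-dimensional hereditary algebra $A$ over some field, with simple modules $S_1,\ldots,S_n$ chosen so that $e_i=[S_i]$ respects the given ordering; the defining data of a generalised Cartan lattice is exactly a valued Cartan matrix of such an algebra, so this realisation exists.

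For a morphism $\phi\colon{\Gamma}'\to{\Gamma}$ (automatically a monomorphism), the tuple $(\phi(e_1),\ldots,\phi(e_r))$ is by definition a real exceptional sequence of ${\Gamma}$ and therefore extends to a complete one $(\phi(e_1),\ldots,\phi(e_r),x_{r+1},\ldots,x_n)$. A key lattice-theoretic input, which I would derive from Hurwitz transitivity (Theorem~\ref{a4-c3-c13-HurwitzTrans}) applied to $c=\operatorname{cox}({\Gamma})$, is that the product of reflections along any complete real exceptional sequence equals $c$. Hence $s_{\phi(e_1)}\cdots s_{\phi(e_r)}$ is a prefix of a $T$-reduced factorisation of $c$ and lies in $\operatorname{NC}({\Gamma})$, defining the forward map $\Psi$. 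Independence of the representative within its equivalence class follows because two equivalent monomorphisms differ by an isometry of ${\Gamma}'$ that decomposes into braid moves on the defining exceptional sequence, each of which preserves the product of reflections via the braid relation.

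For the inverse direction, given $x\in\operatorname{NC}({\Gamma})=\operatorname{NC}(W(A),c)$, Theorem~\ref{th:algebras-main} produces a thick coreflective subcategory $\cC\subseteq\operatorname{mod} A$ generated by an exceptional sequence $(E_1,\ldots,E_r)$ with $x=s_{[E_1]}\cdots s_{[E_r]}$. Hereditariness transfers to $\cC$, so there is a hereditary algebra $B$ with $\cC\simeq\operatorname{mod} B$, and the inclusion induces a monomorphism $K_0(B)\hookrightarrow K_0(A)$ of generalised Cartan lattices whose image basis is $([E_1],\ldots,[E_r])$. Both assignments are order preserving essentially tautologically: containment of sublattices corresponds to containment of the generated thick subcategories, which by Theorem~\ref{th:algebras-main} amounts to divisibility in $\operatorname{NC}({\Gamma})$.

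For the Weyl group statement, each generator $s_{\phi(e_i)}$ fixes the orthogonal complement $\phi({\Gamma}')^{\perp}$ with respect to the symmetrised form, so the whole subgroup $\langle s_{\phi(e_1)},\ldots,s_{\phi(e_r)}\rangle$ acts trivially there and is completely determined by its restriction to $\phi({\Gamma}')$. Transported via $\phi^{-1}$, this restriction sends each $s_{\phi(e_i)}$ to the simple reflection $s_{e_i}$ of $W({\Gamma}')$, so it is surjective onto $W({\Gamma}')$; injectivity is immediate from the trivial action on the complement. I expect the main obstacle to be the rigorous realisation of an arbitrary ${\Gamma}$ as $K_0(A)$ in a manner compatible with the ordering and bilinear form, together with the careful check that distinct exceptional-sequence presentations of a single sublattice yield the same element of $\operatorname{NC}({\Gamma})$; both points are ultimately controlled by Hurwitz transitivity applied internally to the Coxeter element of the sublattice.
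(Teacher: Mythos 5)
The paper itself offers no proof of this theorem: it is quoted from \cite[Thm~5.6]{a4-c3-c13-HK2016} and closed with a \textsc{qed} symbol, so your proposal can only be measured against the way the surrounding sections arrange the material. At that level your architecture --- realise $\Gamma$ as $K_0(A)$, transport Theorem~\ref{th:algebras-main}, and control well-definedness by a transitivity statement --- is the intended one. There is, however, one step that the result you invoke does not secure. You claim to derive ``the product of reflections along any complete real exceptional sequence equals $c$'' from Theorem~\ref{a4-c3-c13-HurwitzTrans}. That theorem says that all reduced $T$-factorisations of $c$ lie in a single Hurwitz orbit; it does \emph{not} say that an arbitrary complete real exceptional sequence yields a factorisation of $c$ in the first place, which is precisely what you need before Hurwitz transitivity has anything to act on. The fact is true, but for a different reason: either one checks directly that for any basis $(x_1,\dots,x_n)$ of real roots with $\langle x_i,x_j\rangle=0$ for $i>j$ the product $w=s_{x_1}\cdots s_{x_n}$ satisfies the adjoint identity $\langle a,b\rangle=-\langle b,w(a)\rangle$, which pins down $w=\operatorname{cox}(\Gamma)$ by non-degeneracy of the Euler form; or one combines the transitivity of the braid group action on complete exceptional sequences (Crawley-Boevey, Ringel) with the equivariance Proposition of the final section. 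The same substitution is needed where you dispose of the choice of representative monomorphism, since there the input is again that every automorphism of $\Gamma'$ carries the standard basis to a complete real exceptional sequence with the same reflection product.

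Two further points are genuine soft spots rather than cosmetic ones. For the Weyl-group isomorphism, your argument that $\langle s_{\phi(e_1)},\dots,s_{\phi(e_r)}\rangle$ is ``completely determined by its restriction to $\phi(\Gamma')$'' because it fixes $\phi(\Gamma')^{\perp}$ pointwise presupposes that $\phi(\Gamma')+\phi(\Gamma')^{\perp}$ has finite index in $\Gamma$, i.e.\ that the symmetrised form is non-degenerate on $\phi(\Gamma')$. Subobjects of affine type occur, the radical is then non-trivial, and injectivity of the restriction map must exclude unipotent elements of the form $x\mapsto x+f(x)\delta$ with $\delta$ in the radical; this needs a separate argument. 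Finally, the realisation of an arbitrary generalised Cartan lattice as $K_0(A)$, which you rightly flag as the main obstacle, is not available over every field: for non-symmetric Cartan data one needs species, hence a base field admitting suitable division-ring extensions (this is the content of the appendix of \cite{a4-c3-c13-HK2016}). One should also be aware that in \cite{a4-c3-c13-HK2016} the lattice-theoretic statement and Theorem~\ref{th:algebras-main} are two faces of the same argument, so a reduction of the former to the latter is legitimate within the survey's logic but is not how the cited source proceeds.
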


\section{Braid group actions on exceptional sequences}

The link between representation theory and non-crossing partitions is
based on the notion of an exceptional sequence and the action of the
braid group on the collection of complete exceptional sequences. This
will be explained in the following section.

There are two sorts of abelian categories that we need to
consider. This follows from a theorem of Happel \cite{a4-c3-c13-Ha,a4-c3-c13-HR2002} which
we now explain.  Fix a field $k$ and consider a connected hereditary
abelian category $\cA$ that is $k$-linear with finite
dimensional Hom and Ext spaces. Suppose in addition that $\cA$
admits a \emph{tilting object}. This is by definition an object $T$ in $\cA$
with $\operatorname{Ext}^1_\cA(T,T)=0$ such that $\operatorname{Hom}_\cA(T,A)=0$
and $\operatorname{Ext}^1_\cA(T,A)=0$ imply $A=0$. Thus the functor
$\operatorname{Hom}_\cA(T,-)\colon\cA\to\operatorname{mod}\Lambda$ into the
category of modules over the endomorphism algebra
$\Lambda=\operatorname{End}_\cA(T)$ induces an equivalence
\[\mathbf{D}^b(\cA)\xrightarrow{\sim}\mathbf{D}^b(\operatorname{mod}\Lambda)\]
of derived categories \cite{a4-c3-c13-Ac3-HK2007}. There are two important classes
of such hereditary abelian categories admitting a tilting object:
module categories over hereditary algebras, and categories of coherent
sheaves on weighted projective lines in the sense of Geigle and
Lenzing \cite{a4-c3-c13-GL1987}. Happel's theorem then states that there are no
further classes.

\begin{theorem}[Happel]
  A hereditary abelian category with a tilting object is, up to a
  derived equivalence, either of the form $\operatorname{mod} A$ for some finite
  dimensional hereditary algebra $A$ or of the form $\operatorname{coh}\XX$ for
  some weighted projective line $\XX$.\qed
\end{theorem}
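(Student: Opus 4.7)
The plan is to exploit the tilting object $T$ to bootstrap from tilting-theoretic structure results, and then to dichotomise $\cA$ based on whether it contains a nonzero projective object. First, setting $\Lambda = \operatorname{End}_\cA(T)$, standard tilting theory gives the triangulated equivalence $R\operatorname{Hom}_\cA(T,-) \colon \mathbf{D}^b(\cA) \xrightarrow{\sim} \mathbf{D}^b(\operatorname{mod}\Lambda)$ already recorded in the excerpt. The point of this step is to transport to $\cA$ the standard apparatus available for finite dimensional algebras: a well-defined Grothendieck group with only finitely many indecomposable classes relevant at each ``slice'', Auslander--Reiten theory on $\mathbf{D}^b(\cA)$, and, since $\cA$ is hereditary, the fact that every indecomposable of $\mathbf{D}^b(\cA)$ is a shift of an indecomposable of $\cA$.

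Next, split into two cases. If $\cA$ contains a nonzero projective object, let $P$ be a sum of a representative set of indecomposable projectives; these are finite in number because each of them is a direct summand of (a shift of) $T$ after applying the equivalence above. Putting $A := \operatorname{End}_\cA(P)^{\mathrm{op}}$, hereditarity of $\cA$ descends to $A$, and the functor $\operatorname{Hom}_\cA(P,-) \colon \cA \to \operatorname{mod} A$ is fully faithful and essentially surjective once one checks that $P$ is a projective generator (using that $T$ generates and that $\operatorname{Ext}^{\geqslant 1}$ vanishes on $P$). Thus $\cA \simeq \operatorname{mod} A$ on the nose, landing in the first form of the conclusion.

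The substantive case is when $\cA$ has no nonzero projective object. The plan is to construct, inside $\cA$, a tilting object $T'$ whose endomorphism algebra is a Ringel canonical algebra $C(p_1,\ldots,p_t;\lambda)$; invoking the Geigle--Lenzing derived equivalence \cite{a4-c3-c13-GL1987} between $\operatorname{mod} C(p_1,\ldots,p_t;\lambda)$ and $\operatorname{coh} \XX$ for the corresponding weighted projective line then yields $\mathbf{D}^b(\cA)\simeq \mathbf{D}^b(\operatorname{coh}\XX)$. To produce $T'$, one exploits that the absence of projectives, combined with the existence of a Serre functor on $\mathbf{D}^b(\cA)$ (forced by the tilting object), upgrades the Auslander--Reiten translate to an auto-equivalence of $\cA$. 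Analysis of its periodic orbits shows the $\tau$-periodic indecomposables partition into finitely many tubes with ranks $p_1,\ldots,p_t$; picking a ray from each tube together with finitely many non-periodic ``line bundle'' objects produces the desired tilting summands, whose endomorphism algebra one computes directly to be canonical of type $(p_1,\ldots,p_t;\lambda)$.

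The main obstacle is precisely this last case: recognising the correct weight data $(p_1,\ldots,p_t;\lambda)$ intrinsically and verifying that the candidate $T'$ really tilts. Tube structure must be extracted from Serre duality and $\operatorname{Ext}^{\geqslant 2}=0$ alone, without a priori knowledge of a geometric model; the rigidity $\operatorname{Ext}^1(T',T')=0$ follows from hereditarity together with the orthogonality built into the choice of rays; and the generation statement requires showing that the chosen indecomposables, via iterated Serre shifts and extensions, already reach every indecomposable of $\cA$. Once these are in hand, uniqueness of the weighted projective line up to $\lambda$-parameter is handled by comparing Grothendieck groups with their induced Euler forms, which are Morita invariants of $\mathbf{D}^b(\cA)$.
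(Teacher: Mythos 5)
First, a point of comparison: the paper does not prove this statement at all --- it is quoted as Happel's theorem with a terminal \verb|\qed| and references to \cite{a4-c3-c13-Ha,a4-c3-c13-HR2002}. So your proposal is not competing with an internal argument; it has to stand on its own as a proof of a classification result that occupies two research papers, and it does not.

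The fatal problem is that your dichotomy is false. You reduce the theorem to: (i) if $\cA$ has a nonzero projective object then $\cA\simeq\operatorname{mod}A$ on the nose; (ii) if $\cA$ has no nonzero projective object then the $\tau$-periodic indecomposables form finitely many tubes, yielding a tilting object with canonical endomorphism algebra and hence $\mathbf{D}^b(\cA)\simeq\mathbf{D}^b(\operatorname{coh}\XX)$. Claim (ii) fails. Let $H$ be a connected wild hereditary algebra and tilt $\operatorname{mod}H$ at the split torsion pair whose torsion class is the preinjective component: the heart $\cA'$ of $\mathbf{D}^b(\operatorname{mod}H)$ whose indecomposables are the preprojective and regular modules together with the shifted preinjectives $I[-1]$ is connected, hereditary (because $\operatorname{Ext}^1_H(X,I)=D\operatorname{Hom}_H(I,\tau X)=0$ for $X$ preprojective or regular and $I$ preinjective), and contains the tilting object $H$ itself. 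It has no nonzero projective object: every module admits a nonzero map to its injective envelope, which lies in the shifted part, and no preinjective module is Ext-projective within the preinjective component. Yet $\cA'$ is derived equivalent to $\operatorname{mod}H$ and to no $\operatorname{coh}\XX$ (the Coxeter transformation has spectral radius greater than $1$, a derived invariant), and it contains no $\tau$-periodic indecomposables whatsoever, so the tubes of ranks $p_1,\ldots,p_t$ you propose to extract simply do not exist. Case (i) is also overclaimed: the existence of one nonzero projective does not produce a projective generator, and the parenthetical ``once one checks that $P$ is a projective generator'' is precisely the content you would have to supply; the correct conclusion there is again only a derived equivalence, obtained in \cite{a4-c3-c13-HR2002} by induction on the rank of $K_0$ through perpendicular categories. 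The real difficulty of Happel's theorem is that, even after discarding projectives and injectives and establishing Serre duality with $\tau$ an auto-equivalence, one must still separate the hearts of module-category type from those of coherent-sheaf type; that separation is the heart of \cite{a4-c3-c13-Ha} (via the structure of $\operatorname{End}(T)$, perpendicular-category induction, and quasi-tilted algebras of canonical type) and is entirely absent from your sketch.
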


It is interesting to observe that these abelian categories form a
category: Any thick and coreflective subcategory is again an abelian
category of that type; so the morphisms are given by such inclusion functors.

Now, fix an abelian category $\cA$ which is either of the form
$\cA=\operatorname{mod} A$ or $\cA=\operatorname{coh}\XX$, as above. Note that in
both cases the Grothendieck group $K_0(\cA)$ is free of finite
rank and equipped with an Euler form, as explained before.  An object
$X$ in $\cA$ is called \emph{exceptional} if it is
indecomposable and $\operatorname{Ext}_\cA^1(X,X)=0$. A sequence
$(X_1,\ldots,X_r)$ of objects is called
\emph{exceptional}\index{exceptional sequence} if each
$X_i$ is exceptional and
$\operatorname{Hom}_\cA(X_i,X_j)=0=\operatorname{Ext}_\cA^1(X_i,X_j)$ for all
$i>j$. Such a sequence is \emph{complete} if $r$ equals the rank of
the Grothendieck group $K_0(\cA)$. Let $n$ denote rank of
$K_0(\cA)$. Then, the braid group $\cB_n$ on $n$
strands is acting on the collection of isomorphism classes of complete
exceptional sequences in $\cA$ via mutations, and it is an
important theorem that this action is transitive (due to
Crawley-Boevey \cite{a4-c3-c13-CB1992} and Ringel \cite{a4-c3-c13-Ri1994} for module
categories, and Kussin--Meltzer \cite{a4-c3-c13-KM2002} for coherent sheaves).

Any tilting object $T$ admits a decomposition
$T=\bigoplus_{i=1}^nT_i$ such that \linebreak $(T_1,\ldots,T_n)$ is a complete
exceptional sequence. We denote by $W(\cA)$ the group of
automorphisms of $K_0(\cA)$ that is generated by the
corresponding reflections $s_{T_1},\ldots,s_{T_n}$; it is the
\emph{Weyl group} with Coxeter element $c=s_{T_1}\cdots s_{T_n}$ and
does not depend on the choice of $T$. Thus we can consider the poset
of non-crossing partitions and we have the Hurwitz action on
factorisations of the Coxeter element as product of reflections. But
it is important to note that $W(\cA)$ is not always a Coxeter
group when $\cA=\operatorname{coh}\XX$, and it is an open question whether
the Hurwitz action is transitive.

The key observation is now the following.

\begin{proposition}
  The map
  \[(E_1,\ldots, E_r)\longmapsto s_{E_1}\cdots s_{E_r}\] which
  assigns to an exceptional sequence in $\cA$ the product of
  reflections in $W(\cA)$ is equivariant for the action of the
  braid group $\cB_r$.\qed
\end{proposition}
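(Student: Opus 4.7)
The plan is to reduce equivariance to a single algebraic identity at the level of the standard Artin generators $\sigma_i$, $1 \le i \le r-1$, of $\cB_r$. Recall that $\sigma_i$ acts on an exceptional sequence $(E_1,\ldots,E_r)$ by a mutation at position $i$, replacing the pair $(E_i,E_{i+1})$ with $(L_{E_i}(E_{i+1}), E_i)$, where the left mutation $L_E(F)$ fits into a canonical triangle
\[
  L_E(F) \longrightarrow \operatorname{RHom}_{\cA}(E,F)\otimes_{k} E \longrightarrow F \longrightarrow L_E(F)[1].
\]
Under the map to reflections, the transformed tuple becomes $(s_{E_1},\ldots,s_{E_{i-1}}, s_{L_{E_i}(E_{i+1})}, s_{E_i}, s_{E_{i+2}},\ldots,s_{E_r})$. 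Comparing with the Hurwitz formula $\sigma_i(t_1,\ldots,t_r) = (\ldots,t_{i-1},t_i^{-1}t_{i+1}t_i,t_i,t_{i+2},\ldots)$ given earlier in the paper, equivariance will follow once one establishes the single identity
\[
  s_{L_E(F)} \;=\; s_E\,s_F\,s_E^{-1}
\]
for every exceptional pair $(E,F)$, together with the fact that each $s_E$ is an involution.

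The crux is a short computation in $K_0(\cA)$. The triangle above yields $[L_E(F)] = \chi(E,F)\,[E] - [F]$, where $\chi(E,F) = \langle[E],[F]\rangle$ is the Euler pairing. Exceptionality of $E$ gives $\langle[E],[E]\rangle = 1$ and hence $([E],[E]) = 2$, while the pair condition $\operatorname{Hom}_{\cA}(F,E) = 0 = \operatorname{Ext}^1_{\cA}(F,E)$ built into the notion of an exceptional sequence gives $\langle[F],[E]\rangle = 0$, so that $([E],[F]) = \chi(E,F)$. Feeding these values into the reflection formula~\eqref{eq:defn-reflection} gives
\[
  s_{[E]}([F]) \;=\; [F] - \chi(E,F)\,[E] \;=\; -[L_E(F)].
\]
Since the reflection associated to a class in $K_0(\cA)$ depends only on the line it spans, $s_{L_E(F)}$ equals the reflection along $s_{[E]}([F])$, which is precisely the conjugate $s_E s_F s_E^{-1}$.

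Once the generator-level identity is in place, equivariance on all of $\cB_r$ is automatic, since both actions are determined by the $\sigma_i$, and verifying that the Hurwitz formula defines a $\cB_r$-action is a direct group-theoretic check in $W(\cA)$. I expect the main obstacle to be bookkeeping rather than substance: one has to match conventions (left versus right mutation, the ordering of an exceptional sequence, the precise form of the Hurwitz generator used) and absorb correctly the sign ambiguity in passing from an element of $K_0(\cA)$ to its associated reflection. As a by-product, the product $s_{E_1}\cdots s_{E_r}$ is preserved by the action, as it must be since the Hurwitz action fixes the product $t_1\cdots t_r$; this is reassuring because the image lands in the fiber of reflection factorisations of the Coxeter element~$c$.
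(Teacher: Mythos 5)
Your argument is correct and is precisely the ``straightforward'' verification that the paper omits (the proposition is stated with a \verb|\qed| and the remark that the proof is straightforward): one reduces to the Artin generators and checks the single identity $s_{L_E(F)}=s_E\,s_F\,s_E^{-1}$ via the mutation triangle, the vanishing $\langle[F],[E]\rangle=0$ forced by exceptionality of the pair, and the fact that an isometry $w$ conjugates $s_x$ to $s_{w(x)}$ while $s_{-y}=s_y$. The one place where your bookkeeping warning really bites is that for general hereditary algebras --- where $\operatorname{End}(E)$ may be a division ring strictly larger than $k$, which is exactly the case the paper needs to reach non-simply-laced Coxeter groups --- the mutation triangle must use $\operatorname{RHom}_{\cA}(E,F)\otimes_{\operatorname{End}(E)}E$ rather than $\otimes_k$; with that correction $s_{[E]}([F])=-[L_E(F)]$ still holds, because the denominator $([E],[E])=2\dim_k\operatorname{End}(E)$ in the reflection formula \eqref{eq:defn-reflection} absorbs the same factor.
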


The proof is straightforward. But a priori it is not clear that the
product $s_{E_1}\cdots s_{E_r}$ is a non-crossing partition. In fact,
the proof of Theorem~\ref{th:algebras-main} hinges on the transitivity
of the Hurwitz action on factorisations of the Coxeter element. So the
analogue of Theorem~\ref{th:algebras-main} for categories of type
$\cA=\operatorname{coh}\XX$ remains open. A proof would provide an
interesting extension of the theory of crystallograpic Coxeter groups
and non-crossing partitions, which seems very natural in view of
Happel's theorem since the Grothendieck group $K_0(\cA)$ is a
derived invariant.

Partial results were obtained recently by Wegener in his thesis
\cite{a4-c3-a4-c3-c13-We2017}. In fact, when a weighted projective line $\XX$ is of
tubular type (that is, the weight sequence is up to permutation of the
form $(2,2,2,2), (3,3,3), (2,4,4)$ or $(2,3,6)$), then the
Grothendieck group gives rise to a tubular elliptic root system
\cite{a4-c3-c13-Sai, a4-c3-c13-STW}. Wegener showed the transitivity of the Hurwitz action
in this case. Thus, one has in particular the analogue of
Theorem~\ref{th:algebras-main} for $\operatorname{coh}\XX$ in the tubular case.


\end{document}